\newtheorem{theorem}{Theorem}[section]
\newtheorem{proposition}[theorem]{Proposition}
\newtheorem{lemma}[theorem]{Lemma}
\newtheorem{corollary}[theorem]{Corollary}
\theoremstyle{definition}
\newtheorem{definition}[theorem]{Definition}
\newtheorem{example}[theorem]{Example}
\newtheorem{problem}[theorem]{Problem}
\newtheorem{remark}[theorem]{Remark}
\newcommand{\ZZ}{ \ensuremath{\mathbb{Z}}}
\newcommand{\CC}{ \ensuremath{\mathbb{C}}}
\newcommand{\RR}{ \ensuremath{\mathbb{R}}}
\newcommand{\rlex}{{\mathrm{{rev}}}}
\newcommand{\rank}{\ensuremath{\mathrm{rank}}\hspace{1pt}}
\def\cocoa{{\hbox{\rm C\kern-.13em o\kern-.07em C\kern-.13em o\kern-.15em A}}}
\newcommand{\hh}{\mathcal{H}}
\newcommand{\pd}{\mathsf{pd}}
\newcommand{\del}{\mathrm{del}}
\newcommand{\D}{\mathcal{D}}
\newcommand{\color}{\mathrm{color}}
\begin{document}

\title[Face vectors of simplicial cell manifolds]
{Face vectors of simplicial cell decompositions\\ of manifolds}

\author{Satoshi Murai}
\address{
Satoshi Murai,
Department of Mathematical Science,
Faculty of Science,
Yamaguchi University,
1677-1 Yoshida, Yamaguchi 753-8512, Japan.
}
%\email{murai@yamaguchi-u.ac.jp}

\thanks{This work was supported by KAKENHI 22740018}

%\author{Another Author}
%\address{
%}
%\email{}

%Keyword and Subject Classes (if needed)
%\keywords{}
%\subjclass[2000]{}

\begin{abstract}
In this paper, we study face vectors of simplicial posets that are the face posets of cell decompositions of  topological manifolds without boundary.
We characterize all possible face vectors of simplicial posets whose geometric realizations are homeomorphic to the product of spheres.
As a corollary,
we obtain the characterization of face vectors of simplicial posets whose geometric realizations are
odd dimensional manifolds without boundary.
\end{abstract}

\maketitle

\section{Introduction}
The study of face numbers is one of the central topics in combinatorics.
A goal of the study is to obtain characterizations of classes of face vectors of certain combinatorial objects.
In this paper,
we study face vectors of simplicial posets,
particularly those whose geometric realizations are manifolds.

A \textit{simplicial poset} is a finite poset $P$ with a minimal element $\hat 0$
such that every interval $[\hat 0, \sigma]$ for $\sigma \in P$ is a boolean algebra.
It is known that any simplicial poset is the face poset of a regular CW-complex $\Gamma(P)$ \cite{Bj}.
A CW-complex whose face poset is a simplicial poset
is called a \textit{simplicial cell complex} (also called a boolean cell complex
or a pseudocomplex).

Let $P$ be a simplicial poset.
We say that an element $\sigma \in P$ has \textit{rank $i$}, denoted $\rank \sigma =i$,
if $[\hat 0,\sigma]$ is a boolean algebra of rank $i+1$.
Thus those elements correspond to $(i-1)$-dimensional cells of $\Gamma(P)$.
The \textit{dimension} of $P$ is 
$$\dim P= \max \{\rank \sigma : \sigma \in P\}-1.$$
Let $f_i=f_i(P)$ be the number of elements $\sigma \in P$ having rank $i$
and $d=\dim P +1$.
The vector $f(P)=(f_0,f_1,\dots,f_d)$ is called the \textit{$f$-vector of $P$}.
To study $f$-vectors,
it is often convenient to consider the \textit{$h$-vector $h(P)=(h_0,h_1,\dots,h_d)$
of $P$} defined by
$$\sum_{i=0}^d f_i t^i (1-t)^{d-i} = \sum_{i=0}^d h_i t^i.$$
It is easy to see that knowing $f(P)$ is equivalent to knowing $h(P)$.

A \textit{simplicial cell sphere} is a simplicial poset $P$ such that $\Gamma(P)$ is homeomorphic to a sphere.
One of the most important results on face vectors of simplicial posets is the next result due to Stanley \cite{St} and Masuda \cite{Ma},
which characterize all possible $h$-vectors of simplicial cell spheres.

\begin{theorem}[Stanley, Masuda]
\label{1.1}
Let $h=(h_0,h_1,\dots,h_d) \in \ZZ^{d+1}$.
Then $h$ is the $h$-vector of a $(d-1)$-dimensional simplicial cell sphere if and only if
it satisfies the following conditions:
\begin{itemize}
\item[(1)] $h_0=h_d=1$ and $h_i=h_{d-i}$ for all $i$.
\item[(2)] $h_i \geq 0$ for all $i$.
\item[(3)] if $h_i=0$ for some $1 \leq i \leq d-1$ then $h_0+h_1+ \cdots +h_d$ is even.
\end{itemize} 
\end{theorem}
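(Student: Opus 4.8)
The plan is to prove both directions through the Stanley face ring of the simplicial poset. Over a field $K$, write $A_P = K[x_\sigma : \hat 0 \neq \sigma \in P]/I_P$, where $\deg x_\sigma = \rank \sigma$ and $I_P$ encodes the relations $x_\sigma x_\tau = x_{\sigma\wedge\tau}\sum_\gamma x_\gamma$ with $\gamma$ ranging over the minimal upper bounds of $\sigma$ and $\tau$. The two features I would exploit are: (i) the Hilbert series of $A_P$ is $\sum_i f_i (t/(1-t))^i = (\sum_i h_i t^i)/(1-t)^d$, so that after reducing modulo a linear system of parameters $\Theta=(\theta_1,\dots,\theta_d)$ the Artinian quotient $\bar A = A_P/\Theta A_P$ has Hilbert function exactly $(h_0,\dots,h_d)$; and (ii) when $\Gamma(P)$ is a sphere, $A_P$ is Cohen--Macaulay and in fact Gorenstein. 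Granting (ii), $\bar A$ is a Gorenstein Artinian $K$-algebra whose socle is concentrated in degree $d$.

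From here the necessity of (1) and (2) is formal. Cohen--Macaulayness makes $\bar A$ a genuine Artinian reduction, so $h_i=\dim_K \bar A_i \ge 0$, giving (2); and the Gorenstein property gives a perfect pairing $\bar A_i\times \bar A_{d-i}\to \bar A_d\cong K$, forcing $h_i=h_{d-i}$ and $h_0=h_d=1$, giving (1). The delicate point is (3). Note first that $\sum_i h_i = f_d$ is the number of facets, and by the symmetry of the interior $\sum_i h_i \equiv h_{d/2}\pmod 2$ when $d$ is even (and is automatically even when $d$ is odd); thus (3) really asserts that an interior zero forces the central value $h_{d/2}$ to be even. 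I would analyze what $h_i=0$ means for $\bar A$: the pairing forces $h_{d-i}=0$ and $\bar A_i\cdot\bar A_{d-i}=0$, so $\bar A$ resembles the cohomology ring of a product of spheres. A crucial subtlety is that, unlike the face ring of a simplicial complex, $A_P$ need not be generated in degree $1$ (two vertices may bound two distinct edges, so $x_{e_1}+x_{e_2}$ lies in the subalgebra generated by atoms while the individual $x_{e_j}$ need not), and it is exactly this freedom that permits interior zeros. The parity statement I expect to extract is that, when such a zero occurs, the facets of $P$ match into pairs — equivalently the socle class is represented by a facet sum that degenerates modulo $2$ — forcing $f_d$ to be even.

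For sufficiency I would argue by explicit construction from a short list of building blocks closed under two operations. The blocks are the minimal $(a-1)$-sphere $M_a$, obtained by gluing two $(a-1)$-simplices along their common boundary, with $h$-polynomial $1+t^a$; and the boundary $\partial\Delta^d$ of the $d$-simplex, with $h$-polynomial $1+t+\cdots+t^d$. The operations are the join, under which $h$-polynomials multiply, so joins $M_{a_1}\ast\cdots\ast M_{a_k}$ with $\sum_j a_j=d$ realize every product $\prod_j(1+t^{a_j})$; and the connected sum $P\# Q$, under which the interior entries add, $h_i(P\# Q)=h_i(P)+h_i(Q)$ for $1\le i\le d-1$, while $h_0=h_d=1$ is preserved and $f_d$ drops by $2$. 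Each join has $f_d=2^k$ and each connected sum changes $f_d$ by $-2$, so all $h$-vectors assembled this way have even coordinate sum; I would then prove the combinatorial lemma that the interior vectors of these joins generate, under addition, every symmetric nonnegative integer vector whose central entry is even, realizing all admissible $h$ with $\sum_i h_i$ even. For $\sum_i h_i$ odd, condition (3) guarantees all interior $h_i$ are positive, so I would start from $\partial\Delta^d$ (interior all ones, odd central entry) and connect-sum with even-sum blocks to raise the interior to the prescribed symmetric vector, keeping parity fixed.

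The main obstacle is the necessity of (3). Conditions (1) and (2) are immediate consequences of the Gorenstein Cohen--Macaulay structure, and sufficiency reduces to the constructive bookkeeping above; but showing that an interior zero forces an even number of facets genuinely requires the non-standard grading of $A_P$ and the fine structure of its socle (or, following Masuda, realizing the relevant rings as cohomology of torus manifolds and counting fixed points). I would expect to spend most of the effort isolating the correct mod-$2$ invariant detecting the pairing of facets, and checking that the constructive side matches it precisely at the center, where a single block contributes a $2$, never a lone $1$, whenever it is flanked by zeros.
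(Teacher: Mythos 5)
Your treatment of conditions (1), (2) and of sufficiency is sound, and it is essentially the known route: the paper itself does not prove Theorem \ref{1.1} but cites Stanley (sufficiency and conditions (1), (2), via the Cohen--Macaulay and Gorenstein properties of $K[P]$) and Masuda (condition (3)); the only proof material in the paper is the generalized parity statement, Lemma \ref{2.2}. Your building blocks are exactly Stanley's: $M_a$ (two $(a-1)$-simplices glued along their common boundary, with $h$-polynomial $1+t^a$), $\partial \Delta^d$, joins (under which $h$-polynomials multiply) and connected sums (under which interior entries add). Your ``combinatorial lemma'' closes easily and needs only two-factor joins: for $d$ even, any symmetric nonnegative interior vector $g$ with $g_{d/2}$ even decomposes as $\sum_{a<d/2} g_a\,(e_a+e_{d-a}) + (g_{d/2}/2)\,(2e_{d/2})$, and $M_a * M_{d-a}$ has interior vector $e_a+e_{d-a}$ (interior $2e_{d/2}$ when $a=d/2$); when $\sum_i h_i$ is odd, condition (3) forces all interior $h_i\geq 1$, so one subtracts the interior of $\partial\Delta^d$ and proceeds as before. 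For $d$ odd, (3) is vacuous, as you observe.

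The genuine gap is at the necessity of (3), which is precisely Masuda's theorem, and your text concedes it (``the parity statement I expect to extract\dots'', ``I would expect to spend most of the effort\dots''). What is missing, concretely, is the chain that the paper runs (in generalized form, following Miller and Reiner) in its sketch of Lemma \ref{2.2}: (a) if $h_i=0$ then the degree-$i$ component of the Artinian reduction $\bar A$ vanishes, hence for any $d$ distinct vertices $x_{v_1}\cdots x_{v_d}=0$ in $\bar A$, because the partial product $x_{v_1}\cdots x_{v_i}$ already lies in the zero space $\bar A_i$ --- note that your observation ``$\bar A_i\cdot \bar A_{d-i}=0$'' is vacuous (both factors are zero) and does not substitute for this step; (b) the defining relations of $K[P]$ together with purity give $x_{v_1}\cdots x_{v_d}=\sum_{\sigma \in P_d,\ V(\sigma)=\{v_1,\dots,v_d\}} x_\sigma$ in $K[P]$; (c) the two pseudomanifold facts labelled (MR1) and (MR2) in the paper: each facet monomial $x_\sigma$ is nonzero in $\bar A$ (this uses $\bar A_d \neq 0$, i.e.\ $h_d=1$, plus strong connectivity through ridges), and $x_\sigma = \pm x_\tau$ whenever $V(\sigma)=V(\tau)$ (a sign propagation along ridge paths). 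Together these force the number of facets on each $d$-element vertex set to be even, whence $f_d=\sum_i h_i$ is even. Step (c) is where the real work lives; without it, your claim that ``the facets of $P$ match into pairs'' is unsupported, and your fallback (Masuda's original torus-manifold fixed-point argument) is a genuinely different and substantially heavier proof. Everything else in your plan goes through.
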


Theorem \ref{1.1} characterizes the face vectors of simplicial cell spheres.
We say that a poset $P$ is a \textit{simplicial cell decomposition of a topological space $X$}
if $P$ is a simplicial poset such that $\Gamma(P)$ is homeomorphic to $X$.
From topological and combinatorial viewpoints,
it is natural to ask a characterization of face vectors of simplicial cell decompositions of a given topological manifold.
In this paper, we give such a characterization for the product of spheres.

Before stating the result,
we define $h''$-vectors introduced by Novik \cite{No}.
From now on, we fix a field $K$.
For a simplicial poset $P$,
let
$$\beta_i=\beta_i(P)=\dim_K \tilde H_i(P;K)$$
be the $i$th \textit{Betti number of $P$},
where $\tilde H_i(P;K)$ is the $i$th reduced homology group of $P$ (or $\Gamma(P)$) over $K$.
The \textit{$h''$-vector $h''(P)=(h_0'',h_1'',\dots,h_d'')$ of $P$ (over $K$)} is defined by
\begin{eqnarray*}
h_k''(P)= \left\{
\begin{array}{lll}
1, & \mbox{ if } k=0,\\
\displaystyle{h_k- {d \choose k} \left\{\sum_{\ell=1}^{k} (-1)^{\ell -k} \beta_{\ell-1} \right\}},
&\mbox{ if }1\leq k \leq d-1,\smallskip\\
h_d -\sum_{\ell=1}^{d-1}(-1)^{\ell-d}\beta_{\ell-1}=\beta_{d-1},
& \mbox{ if } k=d.
\end{array}
\right.
\end{eqnarray*}
If one knows Betti numbers, then knowing $h(P)$ is equivalent to knowing $h''(P)$.
(Since Betti numbers depend on the characteristic,
$h''$-vectors depend on the characteristic of the base field $K$.)
It was proved by Novik \cite{No} and Novik-Swartz \cite{NS} that
the $h''$-vector of a simplicial cell decomposition of an orientable manifold is symmetric and non-negative
(see section 2).
The main result of this paper is the next result,
which characterizes face vectors of simplicial cell decompositions of the
product of spheres $S^n \times S^m$.

\begin{theorem}
\label{1.2}
Fix integers $n, m \geq 1$.
Let $d=n+m+1$ and $h=(h_0,h_1,\dots,h_d) \in \ZZ^{d+1}$.
There exists a simplicial cell decomposition $P$ of $S^n \times S^m$ with $h''(P)=h$
if and only if $h$ satisfies the conditions (1), (2) and (3) in Theorem \ref{1.1}.
\end{theorem}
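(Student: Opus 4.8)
The plan is to treat the two implications separately: for necessity, to read off (1) and (2) from the Novik--Swartz theory and to obtain the parity condition (3) from a Poincar\'e-duality argument on the algebra computing $h''$; for sufficiency, to realize every admissible $h$ by a single minimal decomposition of $S^n\times S^m$ modified by a connected sum with a cell sphere supplied by Theorem~\ref{1.1}.

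Necessity. Let $P$ be a simplicial cell decomposition of $S^n\times S^m$ and put $h=h''(P)$; note that $S^n\times S^m$ has free homology, so $h$ is independent of the characteristic. As $S^n\times S^m$ is a closed orientable $(n+m)$-manifold, the results of Novik and Novik--Swartz quoted in the introduction give at once $h_i=h_{d-i}$ and $h_i\ge 0$, while $h_0=1$ by definition and $h_d=\beta_{d-1}=\dim_K\tilde H_{n+m}(S^n\times S^m;K)=1$; this is (1) and (2). For (3), I would work with the Artinian algebra $A(P)$ obtained, following Novik--Swartz, by reducing the face ring $A_P$ modulo a linear system of parameters and then modulo the subspace coming from the homology of $P$: its Hilbert function is $h''(P)$, and orientability makes it a Poincar\'e-duality algebra with one-dimensional socle in the top degree $d$. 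Because $A_P$ is the face ring of a simplicial poset it is not generated in degree one, so this Gorenstein-type algebra may have internal zeros, and I would transcribe Stanley's parity argument from the sphere case: an internal zero $h_i=0$ forces an involution on the top-degree classes, which descend from the facets, so that the total dimension $\sum_j h_j=\dim_K A(P)$ is even. Carrying Stanley's argument through the non-standard-graded setting is the delicate point of this half.

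Sufficiency. Here the driving tool is the connected sum. If $Q$ is a simplicial cell decomposition of a $(d-1)$-manifold and $S$ is any simplicial cell $(d-1)$-sphere, then $Q\# S$ is again a simplicial cell decomposition of the same manifold, since connected sum with a sphere does not change the homeomorphism type and hence leaves the Betti numbers untouched; moreover a direct $f$-vector computation gives $h_k(Q\# S)=h_k(Q)+h_k(S)$ for $1\le k\le d-1$, and since $h''(S)=h(S)$ for a sphere this becomes $h_k''(Q\# S)=h_k''(Q)+h_k(S)$ in the same range. Granting a base decomposition $P_0$ of $S^n\times S^m$ with $h''(P_0)=(1,0,\dots,0,1)$, I would proceed as follows. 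Given $h$ satisfying (1)--(3), Theorem~\ref{1.1} furnishes a simplicial cell $(n+m)$-sphere $S$ with $h(S)=h$; set $P=P_0\# S$. Then $P$ decomposes $(S^n\times S^m)\# S^{n+m}=S^n\times S^m$, and the formula above gives $h_k''(P)=0+h_k=h_k$ for $1\le k\le d-1$, while $h_0''(P)=1$ and $h_d''(P)=\beta_{d-1}(P)=1$ are forced. Hence $h''(P)=h$, as required.

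The whole of sufficiency therefore rests on the one new construction---a minimal decomposition $P_0$ of $S^n\times S^m$ with $h''(P_0)=(1,0,\dots,0,1)$---and this is where I expect the real difficulty. The obvious candidate, a triangulation of the product $\mathcal S^n\times\mathcal S^m$ of the two-facet spheres, is not minimal enough: for $n=m=1$ it already yields $h''=(1,1,1,1)$ rather than the required $(1,0,0,1)$, the latter forcing $f=(1,3,9,6)$, a torus realizable only as a regular map with multiple edges between its three vertices. I would instead construct $P_0$ directly, using the freedom of simplicial posets to glue facets along proper subfaces---either by writing down a minimal simplicial cell decomposition of $S^n\times S^m$ generalizing this minimal torus, or by attaching a single ``$(n,m)$-handle'' to the two-facet $(n+m)$-sphere so that the homology classes of $S^n$ and $S^m$ are created while the correction terms in the definition of $h''$ precisely absorb the resulting increase of the ordinary $h$-vector. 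Proving that the object so obtained is a genuine simplicial poset homeomorphic to $S^n\times S^m$ with $h''=(1,0,\dots,0,1)$ is the crux of the argument.
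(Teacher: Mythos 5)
Your framework is the paper's own: necessity from the Novik and Novik--Swartz results plus a parity argument, and sufficiency by reducing, via connected sums with the simplicial cell spheres of Theorem \ref{1.1}, to the existence of a single decomposition $P_0$ of $S^n\times S^m$ with $h''(P_0)=(1,0,\dots,0,1)$ (this is exactly Lemma \ref{3.1} and Corollary \ref{3.2}, and your $n=m=1$ computation with $f=(1,3,9,6)$ is the paper's own example). But you stop precisely where the paper's real work begins: you do not construct $P_0$, and you say yourself that this is ``the crux.'' Neither of your two suggestions (generalizing the $9$-edge torus, or attaching an ``$(n,m)$-handle'' to the two-facet sphere) comes with any mechanism for verifying that the result is a simplicial poset, let alone PL-homeomorphic to $S^n\times S^m$; an ad hoc identification of faces can easily destroy the boolean-interval property or the topology. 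The paper needs all of sections 4--6 for this step: it builds the Gagliardi--Grasselli crystallization $\Lambda(n,m)$ from four triangulated products of simplices ($4{n+m \choose n}$ vertices in the colored graph, i.e.\ facets of the complex), then performs an explicitly ordered sequence of ${n+m\choose n}-1$ dipole cancellations --- each preserving the homeomorphism type by the Ferri--Gagliardi lemma --- and the legitimacy of each cancellation (disconnectivity in the complementary colors, existence of edges of the right colors) is a genuinely intricate induction (Lemmas \ref{6.5}--\ref{6.12}). The conversion from the final facet count $2+2{n+m\choose n}$ to $h''=(1,0,\dots,0,1)$ is Lemma \ref{6.2}. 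None of this is present or replaceable by the sketch you give, so the sufficiency half is incomplete at its central point.

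There is also a gap in your necessity argument for condition (3). You propose to endow the Artinian quotient with Hilbert function $h''$ with a Poincar\'e-duality structure and run Stanley's involution argument, and you flag the non-standard grading as delicate --- rightly, since that duality-algebra structure for simplicial posets is not among the results you may quote, and the step is not carried out. The paper takes a different and more elementary route: Lemma \ref{2.2} proves that an internal zero $h''_i=0$ forces the \emph{number of facets} to be even, by combining the Novik--Swartz socle results (their conditions (NS1), (NS2)) with the Miller--Reiner relations $x_\sigma=\pm x_\tau$ among facet classes sharing a vertex set, so that for each vertex set the facets pair off; Corollary \ref{2.3} then converts facet parity into parity of $\sum_i h''_i$ by the mod-$2$ identity $\sum_i h''_i = \sum_i h_i + 2\bigl\{\sum_{i=1}^{(d-2)/2}\beta_i{d-1\choose i}\bigr\}$, which uses Poincar\'e duality only for Betti numbers. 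Your outline would need the unproven Gorenstein structure; the paper's argument does not.
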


The tequnique used in the proof of the above theorem is also applicable to
other classes of simplicial posets.
We characterize all possible face vectors of simplicial cell decompositions
of real projective spaces (Theorem \ref{7.1}) and face vectors of simplicial cell complexes
that are odd dimensional topological manifolds without boundary (Theorem \ref{7.2}).

This paper is organized as follows:
In section 2, we recall known conditions on $h''$-vectors and prove the necessity of Theorem \ref{1.2}.
In section 3--6,
we prove the sufficiency of Theorem \ref{1.2}.
The key idea to prove the sufficiency is a graph theoretical construction of simplicial cell decompositions of
manifolds called crystallizations \cite{FGG}.
In section 7, we discuss face vectors of simplicial cell decompositions of real projective spaces and odd dimensional manifolds.

\section{Necessity conditions of $h''$-vectors}

In this section, we recall some known necessity conditions of $h''$-vectors.

Let $P$ be a $(d-1)$-dimensional simplicial poset.
For an element $\sigma \in P$,
the \textit{link} of $\sigma$ in $P$, denoted by $P_{\geq \sigma}$, is a poset
$$P_{\geq \sigma} = \{ \tau \in P : \tau \geq \sigma\}.$$
It is easy to see that $P_{\geq \sigma}$ is again a simplicial poset with the minimal element $\sigma$.
For $k=0,1,\dots,d$
we write
$$P_k=\{\sigma \in P: \rank \sigma =k\}.$$
An element in $P_1$ is called a \textit{vertex of $P$}
and a maximal element in $P$ is called a \textit{facet of $P$}.
A simplicial poset $P$ is said to be \textit{pure} if all its facets have the same rank.

A $(d-1)$-dimensional simplicial poset $P$ is said to be a \textit{homology sphere (over $K$)}
if, for all $\sigma \in P$, $\beta_i(P_{\geq \sigma})=0$ for all $i \ne d-1-\rank \sigma$
and $\beta_{d-1-\rank \sigma }(P_{\geq \sigma})=1$.
Also, a pure simplicial poset $P$ is said to be a \textit{homology manifold (without boundary)}
if $P_{\geq v}$ is a homology sphere for all vertices $v \in P_1$.
A \textit{simplicial cell homology manifold} is a simplicial poset which is a homology manifold.
From now on, we assume that all homology manifolds are connected.
A simplicial cell homology manifold $P$ is said to be \textit{orientable}
if $\beta_{d-1}(P)=1$.

The next result is crucial for the necessity of Theorem \ref{1.2}.

\begin{theorem}
\label{2.1}
Let $P$ be a $(d-1)$-dimensional simplicial cell homology manifold. Then
\begin{itemize}
\item[(i)](Novik-Swartz) $h_i''(P) \geq 0$ for all $i$.
\item[(ii)](Novik) if $P$ is orientable then $h_i''(P)=h_{d-i}''(P)$ for all $i$.
\end{itemize}
\end{theorem}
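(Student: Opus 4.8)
The plan is to translate everything into commutative algebra via Stanley's face ring. Write $K[P]$ for the face ring of the simplicial poset $P$; it is a finitely generated graded $K$-algebra of Krull dimension $d$ whose Hilbert series encodes the $f$- and $h$-vectors. Since both the $h$-vector and the Betti numbers are insensitive to field extension (one has $\tilde H_i(P;K')\cong \tilde H_i(P;K)\otimes_K K'$), I may assume $K$ is infinite and fix a linear system of parameters $\Theta=\theta_1,\dots,\theta_d$ for $K[P]$. The first step is to record the two structural inputs. The homology-manifold hypothesis says every vertex link is a homology sphere; passing to iterated links, every link of a nonempty face is a homology sphere and hence Cohen-Macaulay, and $P$ is pure. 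By Schenzel's characterization (extended to simplicial posets by Novik-Swartz \cite{NS}) this makes $K[P]$ a Buchsbaum ring, and a Hochster/Gr\"abe-type computation of local cohomology gives
$$H^i_{\mideal}(K[P])\cong \tilde H_{i-1}(P;K)\qquad(1\le i\le d-1),$$
concentrated in internal degree $0$; in particular $\dim_K H^i_{\mideal}(K[P])=\beta_{i-1}$ for $i<d$.

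For part (i), set $\bar R:=K[P]/\Theta K[P]$, an Artinian graded algebra. Schenzel's formula for Buchsbaum rings computes its Hilbert function as the $h'$-vector,
$$\dim_K \bar R_k = h_k+\binom{d}{k}\sum_{i=1}^{k-1}(-1)^{k-i-1}\beta_{i-1}=:h'_k.$$
The crux is the Novik-Swartz socle lemma \cite{NS}: the Koszul syzygies of $\Theta$ acting on the degree-$0$ local cohomology modules $H^k_{\mideal}(K[P])$ with $k<d$ produce a graded ideal $\Sigma$ contained in the socle of $\bar R$, with
$$\dim_K \Sigma_k=\binom{d}{k}\beta_{k-1}\ \ (1\le k\le d-1),\qquad \Sigma_d=0.$$
Since $\Sigma$ is annihilated by the maximal ideal it is indeed an ideal, so $\hat R:=\bar R/\Sigma$ is again a nonnegatively graded $K$-algebra, and, using the identity $h'_d=h''_d$ in top degree, $\dim_K\hat R_k=h''_k$ for all $k$. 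The Hilbert function of a nonnegatively graded algebra is nonnegative, which is precisely $h''_i(P)\ge0$.

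For part (ii) I would refine this to a duality. When $P$ is orientable we have $\beta_{d-1}=1$, so $\hat R_d\cong K$, and the resulting fundamental class singles out a generator of the top local cohomology $H^d_{\mideal}(K[P])$ that, through graded local duality, places $K[P]$ in a self-dual position. The goal is then to show that $\hat R$ is a Poincar\'e duality algebra, i.e.\ that the multiplication pairing
$$\hat R_k\times \hat R_{d-k}\longrightarrow \hat R_d\cong K$$
is perfect. Granting this, $\dim_K \hat R_k=\dim_K \hat R_{d-k}$, that is $h''_k=h''_{d-k}$, which is the assertion of Novik \cite{No}. Orientability enters exactly in guaranteeing nondegeneracy: one must verify that factoring out $\Sigma$ removes precisely the obstruction to $\bar R$ being Gorenstein and leaves a nondegenerate pairing on the quotient.

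The main obstacle, in both parts, is the socle lemma and its orientable refinement: identifying the ideal $\Sigma$ inside the socle of $\bar R$, proving it has exactly the predicted graded dimensions $\binom{d}{k}\beta_{k-1}$, and (for (ii)) showing that $\hat R$ is a genuine Poincar\'e duality algebra. By contrast, the reductions feeding into these---Buchsbaumness, the degree-$0$ concentration of local cohomology, and Schenzel's Hilbert-function formula---are formal once the homology-manifold hypothesis has been unwound into the statement that all face links are Cohen-Macaulay homology spheres.
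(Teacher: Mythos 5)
Your part (i) is fine as far as it goes: it is a faithful summary of the Novik--Swartz argument (Buchsbaumness of $K[P]$ via Cohen--Macaulay links, Schenzel's formula $\dim_K \bar R_k = h'_k$, the socle ideal $\Sigma$ with $\dim_K\Sigma_k = {d \choose k}\beta_{k-1}$, and $h''_k$ as the Hilbert function of $\bar R/\Sigma$), and the paper itself does not reprove this but cites exactly those results of \cite{NS}, so invoking the socle lemma as a black box puts you on the same footing as the paper.

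Part (ii), however, has a genuine gap. You reduce the symmetry $h''_k = h''_{d-k}$ to the assertion that $\hat R = \bar R/\Sigma$ is a Poincar\'e duality algebra, i.e.\ that the pairing $\hat R_k \times \hat R_{d-k} \to \hat R_d \cong K$ is perfect --- and then you supply no argument for nondegeneracy, explicitly conceding it as ``the main obstacle.'' So as written nothing is proved for (ii). This reduction is also backwards in difficulty: the Gorenstein property of $\bar R/\Sigma$ for connected orientable homology manifolds is a true but substantially deeper theorem (proved by Novik and Swartz in later work), strictly stronger than the numerical symmetry you need, whereas the symmetry itself has an elementary proof, which is the route the paper takes. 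Namely, for a $(d-1)$-dimensional homology manifold the Dehn--Sommerville equations give
$$h_{d-i}(P)-h_i(P)=(-1)^i{d \choose i}\left(\tilde\chi(P)-\tilde\chi(S^{d-1})\right),$$
and these hold for simplicial posets (see \cite[section 8]{MMP}); combining this with Poincar\'e duality $\beta_i = \beta_{d-1-i}$ --- which is where orientability, $\beta_{d-1}=1$, enters --- a direct manipulation of the defining formula for $h''$ yields $h''_i = h''_{d-i}$. No face rings, local cohomology, or socle analysis is needed for (ii), and replacing your unproven Poincar\'e-duality-algebra step by this computation is the fix.
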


The first condition was recently proved by Novik and Swartz \cite[Proposition 6.3 and Theorem 6.4]{NS}.
The second condition was proved by Novik in \cite[Lemma 7.3]{No}
for simplicial complexes.
However, this condition essentially follows from the Dehn--Sommerville equations for homology manifolds and the Poincar\'e duality.
Since both the Dehn--Sommerville equations and the Poincar\'e duality hold for simplicial posets,
(ii) holds for simplicial posets.
See \cite[section 8]{MMP} for Dehn--Sommerville equations for simplicial posets.

Another necessity condition of $h''$-vectors which we need is the following condition.

\begin{lemma}
\label{2.2}
Let $P$ be a $(d-1)$-dimensional orientable simplicial cell homology manifold.
If $h_i''(P)=0$ for some $1 \leq i \leq d-1$ then the number of facets of $P$ is even.
\end{lemma}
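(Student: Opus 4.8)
The plan is to compute the number of facets modulo $2$ and reduce the whole claim to a single parity. Setting $t=1$ in the defining identity $\sum_i h_i t^i=\sum_i f_i t^i(1-t)^{d-i}$ shows that the number of facets equals $\sum_{i=0}^d h_i$. When $d$ is odd the conclusion is immediate and unconditional: each facet covers exactly $d$ elements of rank $d-1$ (ridges), and since $P$ is a manifold without boundary each ridge is covered by exactly two facets, so $d\cdot f_d=2f_{d-1}$ and $f_d$ is even. The substance is therefore the case $d$ even, and the goal of the first step is to prove that there $\sum_{i=0}^d h_i\equiv h_{d/2}''\pmod 2$.

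For this I would substitute the definition of $h''$ into $\sum_i h_i$, write each $h_i$ in terms of $h_i''$ and the Betti numbers, interchange the order of summation, and use the elementary identity $\sum_{k=0}^m(-1)^k\binom{d}{k}=(-1)^m\binom{d-1}{m}$ to obtain
\[
\sum_{i=0}^d h_i \;=\; \sum_{i=0}^d h_i'' \;+\; \sum_{j=0}^{d-2}\binom{d-1}{j}\beta_j .
\]
Since $P$ is connected and orientable, $\beta_0=0$ and $\beta_{d-1}=1$, while Poincar\'e duality gives $\beta_j=\beta_{d-1-j}$ for $1\le j\le d-2$; as $d-1$ is odd these terms pair off, so the second sum is even. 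By the symmetry $h_i''=h_{d-i}''$ of Theorem \ref{2.1}(ii), the first sum is congruent to $h_{d/2}''$ modulo $2$. This yields $f_d\equiv h_{d/2}''\pmod 2$, so it remains to show that $h_{d/2}''$ is even under the hypothesis.

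If the vanishing index may be taken to be $d/2$ itself, then $h_{d/2}''=0$ and we are done, so I may assume $h_p''=0$ for some $1\le p<d/2$, whence also $h_{d-p}''=0$ by symmetry. Here I would pass to the Artinian Gorenstein (Poincar\'e duality) $K$-algebra $B=\bigoplus_i B_i$ underlying Theorem \ref{2.1}, for which $\dim_K B_i=h_i''$ and the multiplication $B_{d/2}\times B_{d/2}\to B_d\cong K$ is a nondegenerate \emph{symmetric} form; the aim would be to show that this form is alternating, so that $h_{d/2}''=\dim_K B_{d/2}$ is even. \textbf{This is the main obstacle.} Abstract Poincar\'e duality does not suffice: $K[u]/(u^3)$ with $\deg u=2$ is Gorenstein with $h''=(1,0,1,0,1)$, and indeed $S^n\times S^m$ can have an odd number of facets when no $h_i''$ vanishes, so the hypothesis and the manifold structure must both be used in an essential way. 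The intended route is to exploit the gap $B_p=B_{d-p}=0$, together with the finer module structure of $A/\Theta A$ from Novik--Swartz, to split off a ``middle'' Gorenstein factor supported in the degrees $p<i<d-p$ whose socle pairing is forced to be alternating by the fundamental class; this should run parallel to Stanley's and Masuda's proof of condition (3) of Theorem \ref{1.1} for cell spheres, of which the present lemma is the homology-manifold analogue. Controlling this middle factor and verifying that its pairing is alternating is the crux of the argument.
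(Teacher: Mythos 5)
Your reduction is correct as far as it goes: the unconditional argument for odd $d$ (from $d\,f_d = 2f_{d-1}$), the congruence $\sum_i h_i \equiv \sum_i h_i'' \pmod 2$ via $\sum_i h_i'' = \sum_i h_i + \sum_{j=1}^{d-2} {d-1 \choose j}\beta_j$ with the correction term even by Poincar\'e duality (this is exactly the computation the paper performs in Corollary \ref{2.3}, in the reverse logical direction), and the further reduction to the parity of $h''_{d/2}$ via the symmetry of Theorem \ref{2.1}(ii). But the remaining step --- that $h''_{d/2}$ is even whenever $h''_p=0$ for some $1\leq p< d/2$ --- is not a technical loose end; it is the entire content of the lemma (for spheres it \emph{is} Masuda's theorem, i.e.\ condition (3) of Theorem \ref{1.1}), and you leave it unproven. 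Worse, the route you sketch for it is structurally problematic. First, the splitting-off of a ``middle Gorenstein factor'' supported in degrees $p<i<d-p$ is not available: the face ring $K[P]$ of a simplicial poset has generators $x_\sigma$ in \emph{every} degree, so a vanishing graded piece $B_p=0$ does not create a gap ideal --- elements of degrees below $p$ can multiply into the middle range, and there is no evident subquotient with the pairing you want. Second, alternation has no source in this setting: $B$ is a genuinely commutative graded algebra (not graded-commutative in the topologist's sense), so there is no sign rule forcing $x^2=0$ for $x\in B_{d/2}$, and the fundamental class does not supply one. Your own example $K[u]/(u^3)$ shows you are aware that duality alone fails, but the proposed fix is never executed and, as stated, does not obviously exist.

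The paper's proof runs along a different and more concrete axis, and it is worth seeing why it sidesteps the pairing question entirely. Working in $A_P=K[P]/(\theta_1,\dots,\theta_d)K[P]$, the hypothesis $h''_i(P)=0$ combined with the Novik--Swartz socle theorem forces \emph{every} element of $(A_P)_i$ to be socle; since any product of $d$ distinct vertex variables passes through degree $i$, one gets $x_{v_1}\cdots x_{v_d}=0$ in $A_P$ for all choices of distinct vertices. Purity rewrites this monomial in $K[P]$ as $\sum_\sigma x_\sigma$, summed over facets $\sigma$ with $V(\sigma)=\{v_1,\dots,v_d\}$. The Miller--Reiner pseudomanifold lemmas then say that each facet class $x_\sigma$ is nonzero in $A_P$ (using $(A_P)_d\neq 0$, guaranteed by $h''_d=1$) and that facet classes with the same vertex set agree up to sign; hence each vertex set supports an even number of facets, and summing over vertex sets gives the parity of $f_d$ directly. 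In other words, the evenness is located facet-by-facet at the top of the ring, not extracted from the dimension of a middle graded piece. If you want to salvage your outline, the honest statement is that your first two paragraphs prove ``$f_d \equiv h''_{d/2} \pmod 2$ for $d$ even,'' which together with the lemma (proved as above) yields Corollary \ref{2.3}; as a proof of Lemma \ref{2.2} itself, the argument is circular at the crux.
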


For homology spheres,
the above statement was conjectured by Stanley \cite{St}
and proved by Masuda \cite{Ma}.

To prove Lemma \ref{2.2},
we need an algebraic tool, called face rings.
Since
% here is the only place that we need algebra and since 
the proof is essentially the same as the simple proof of Masuda's result given by Miller and Reiner \cite{MR},
we just sketch the proof.
We refer the readers to \cite{St} and \cite{Du} for basic algebraic properties of face rings
and basic algebraic notations on commutative algebra.

Let $P$ be a simplicial poset,
$R=K[x_\sigma: \sigma \in P \setminus \{ \hat 0\}]$
the polynomial ring over a field $K$ in indeterminates indexed by the elements in $P \setminus \{\hat 0\}$
and $S=K[x_v: v \in P_1]$.
The \textit{face ring of $P$} is the quotient ring $K[P]=R/I_P$,
where $I_P$ is the ideal generated by the following elements
\begin{itemize}
\item $x_\sigma x_\tau$ for all pairs $\sigma,\tau \in P$ that have no common upper bounds in $P$.
\item $x_\sigma x_\tau - x_{\sigma \wedge \tau} \sum_\rho x_\rho$,
where the summation runs over the minimal elements among all upper bounds of $\sigma$ and $\tau$,
and where $\sigma \wedge \tau$ is the meet (largest lower bounds) of $\sigma$ and $\tau$.
(We consider $x_{\sigma \wedge \tau}=1$ if $\sigma \wedge \tau=\hat 0$.)
\end{itemize}
It is known that,
by setting $\deg x_\sigma=\rank \sigma$,
$K[P]$ is a $d$-dimensional finitely generated $S$-module whose Hilbert series determines the $f$-vector of $P$,
where $d=\dim P+1$.
See \cite[Proposition 3.8 and Lemma 3.9]{St}.

\begin{proof}[Proof of Lemma \ref{2.2}](Sketch).
Throughout the proof we regard $K[P]$ as an $S$-module.
Let $\theta_1,\dots,\theta_d \in S$ be an l.s.o.p.\ of $K[P]$
(it exists by assuming that $K$ is infinite if necessary)
and $A_P = K[P]/(\theta_1,\dots,\theta_d)K[P]$.
By Schenzel's results (see \cite[Proposition 6.3]{NS}),
we have
\begin{itemize}
\item[(NS1)]
$\dim_K (A_P)_d = h''_d =1$
and $\dim_K (A_P)_k = h''_k + {d \choose k} \beta_{k-1}(P)$ for $k=1,\dots,d-1$,
\end{itemize}
where $(A_P)_k$ is the homogeneous component of $A_P$ of degree $k$.
Since $h''_i(P)=0$, it follows from \cite[Theorem 6.4]{NS} that all elements in $(A_P)_i$
are socle elements, that is, for any $f \in (A_P)_i$ and for any homogeneous polynomial $h \in S$ with $h \not \in K$, we have $f h =0$ in $A_P$.
In particular, for distinct vertices $v_1,v_2,\dots,v_d$ of $P$,
we have
\begin{itemize}
\item[(NS2)] $x_{v_1} x_{v_2} \cdots x_{v_d}=0$ \mbox{ in } $A_P$.
\end{itemize}
For an element $\sigma \in P$,
let $V(\sigma)=\{v \in P_1: v \leq \sigma\}$
be the set of vertices of $\sigma$.
Since $P$ is pure, by the definition of the ideal $I_P$,
$x_{v_1} x_{v_2} \cdots x_{v_d}=\sum_{\sigma \in P_d,\ V(\sigma)=\{v_1,\dots,v_d\}} x_\sigma$
in $K[P]$.
Since $P$ is a pseudomanifold (see section 4 for the definition of pseudomanifolds)
it follows from \cite[Propositions 5 and 6]{MR} that
\begin{itemize}
\item[(MR1)] if $(A_P)_d \ne \{0\}$ then, for any facet $\sigma \in P$, $x_\sigma \ne 0$ in $A_P$.
\item[(MR2)] for all facets $\sigma$ and $\tau$ of $P$ with $V(\sigma)=V(\tau)$,
$x_\sigma = \pm x_\tau$.
\end{itemize}
(NS1) shows that the assumption of (MR1) is satisfied.
Then, for distinct vertices $v_1,\dots,v_d$ of $P$,
since (NS2) says
$\sum_{\sigma \in P_d,\ V(\sigma)=\{v_1,\dots,v_d\}} x_\sigma=0$ in $A_P$,
by (MR1) and (MR2) it follows that
the number of faces $\sigma$ of $P$ with $V(\sigma)=\{v_1,\dots,v_d\}$ is even.
Hence the number of facets of $P$ is even.
\end{proof}

\begin{corollary}
\label{2.3}
Let $P$ be a $(d-1)$-dimensional orientable simplicial cell homology manifold.
If $h''_i(P)=0$ for some $1 \leq i \leq d-1$, then $\sum_{i=0}^d h_i''(P)$
is even.
\end{corollary}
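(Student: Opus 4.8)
The plan is to show that $\sum_{i=0}^d h_i''(P)$ has the same parity as the number of facets of $P$, and then to invoke Lemma \ref{2.2}. First I would record that the number of facets equals $\sum_{i=0}^d h_i$: since $P$ is pure of dimension $d-1$, its facets are exactly the rank-$d$ elements, and substituting $t=1$ in the defining relation $\sum_i f_i t^i (1-t)^{d-i} = \sum_i h_i t^i$ annihilates every term on the left except the one with $i=d$, giving $f_d = \sum_{i=0}^d h_i$. Thus it suffices to prove $\sum_{i=0}^d h_i \equiv \sum_{i=0}^d h_i'' \pmod 2$.

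Next I would compute the difference $\Delta := \sum_{i=0}^d (h_i - h_i'')$ in closed form. Writing $\gamma_k = \sum_{\ell=1}^k (-1)^{\ell-k}\beta_{\ell-1}$, the definition of $h''$ gives $h_k - h_k'' = \binom{d}{k}\gamma_k$ for $0 \le k \le d-1$ (the case $k=0$ being $0=0$), while at $k=d$ one checks $h_d - h_d'' = \binom{d}{d}\gamma_d - \beta_{d-1}$. Summing over $k$ and interchanging the order of summation, the coefficient of $\beta_{\ell-1}$ becomes $\sum_{k=\ell}^d (-1)^{\ell-k}\binom{d}{k}$, which equals $\binom{d-1}{\ell-1}$ by the standard partial-sum identity for alternating binomial coefficients. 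Once the isolated $\beta_{d-1}$ cancels the $j=d-1$ contribution, this collapses to the clean formula
\begin{equation*}
\Delta = \sum_{j=0}^{d-2}\binom{d-1}{j}\beta_j.
\end{equation*}

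Finally I would argue that $\Delta$ is even. Since $P$ is connected, $\beta_0 = 0$, so the $j=0$ term drops and $\Delta = \sum_{j=1}^{d-2}\binom{d-1}{j}\beta_j$. By Poincar\'e duality for the orientable homology manifold $P$ (valid for simplicial posets, as noted after Theorem \ref{2.1}) one has $\beta_j = \beta_{d-1-j}$ for $1 \le j \le d-2$, and $\binom{d-1}{j} = \binom{d-1}{d-1-j}$; hence each summand is invariant under the involution $j \mapsto d-1-j$ of $\{1,\dots,d-2\}$, so all terms cancel in pairs modulo $2$ except possibly a single self-paired term. Such a term occurs only when $d$ is odd, at $j=(d-1)/2$, where its binomial factor is the central coefficient $\binom{d-1}{(d-1)/2} = \binom{2m}{m}$ with $d-1=2m$, which is even for $m \ge 1$. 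Therefore $\Delta \equiv 0 \pmod 2$, so $\sum_i h_i'' \equiv \sum_i h_i = f_d \pmod 2$, and by Lemma \ref{2.2} the hypothesis $h_i''(P)=0$ forces $f_d$, hence $\sum_i h_i''(P)$, to be even. The one point demanding care is precisely the parity of this self-paired middle term, which is where the evenness of the central binomial coefficient enters.
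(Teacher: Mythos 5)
Your proof is correct and follows essentially the same route as the paper's: reduce to the parity of the facet number $f_d=\sum_i h_i$ via Lemma \ref{2.2}, compute $\sum_i (h_i-h''_i)=\sum_{j=1}^{d-2}{d-1 \choose j}\beta_j$ by the alternating partial-sum identity, and kill this sum mod $2$ by pairing $j$ with $d-1-j$ via Poincar\'e duality. The only divergence is how odd $d$ is handled: the paper first uses the symmetry $h''_i=h''_{d-i}$ of Theorem \ref{2.1}(ii) to assume $d$ even, making the involution $j\mapsto d-1-j$ fixed-point-free, whereas you treat both parities at once and dispose of the self-paired middle term by the evenness of the central binomial coefficient ${2m \choose m}=2{2m-1 \choose m-1}$ --- an equally valid, and correctly flagged, local variation.
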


\begin{proof}
By the symmetry of $h''$-vectors, we may assume that $d$ is even.
Since $f_d(P)=\sum_{i=0}^d h_i(P)$ is even
by Lemma \ref{2.2}, it is enough to prove that $\sum_{i=0}^d h_i(P)\equiv \sum_{i=0}^d h''_i(P)$ mod 2.
By the definition of $h''$-vectors,
\begin{eqnarray*}
\sum_{i=0}^d h''_i(P)&=& \sum_{i=0}^d h_i(P) + \left[ \sum_{i=1}^{d-2} \beta_i(P) \left\{ \sum_{l=i+1}^d (-1)^{l-i-1} {d \choose l} \right\}\right]\\
&=& \sum_{i=0}^d h_i(P) +  \sum_{i=1}^{d-2} \beta_i(P) {d -1\choose i}\\
&=& \sum_{i=0}^d h_i(P) +  2 \left\{\sum_{i=1}^{\frac {d-2} 2} \beta_i(P) {d -1\choose i} \right\}
\end{eqnarray*}
as desired,
where we use the Poincar\'e duality $\beta_i=\beta_{d-1-i}$ for the last equality.
\end{proof}

Theorem \ref{2.1} and Corollary {2.3} prove the necessity of Theorem \ref{1.2}.
More precisely,

\begin{theorem}
\label{2.4}
If $P$ is a $(d-1)$-dimensional orientable simplicial cell homology manifold,
then $h''(P)$ satisfies the conditions (1), (2) and (3) in Theorem \ref{1.1}.
\end{theorem}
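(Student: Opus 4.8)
The plan is to read off the three conditions of Theorem~\ref{1.1} directly from the results assembled earlier in this section, applied to the vector $h=h''(P)$.

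First I would dispose of condition~(2): by the Novik--Swartz inequality, namely Theorem~\ref{2.1}(i), we have $h_i''(P)\geq 0$ for every $i$, which is exactly non-negativity.

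Next, for condition~(1) I would separate the symmetry from the endpoint normalization. The symmetry $h_i''(P)=h_{d-i}''(P)$ is precisely Theorem~\ref{2.1}(ii), which applies because $P$ is assumed orientable. For the boundary values, the definition of the $h''$-vector gives $h_0''(P)=1$ outright, and it sets $h_d''(P)=\beta_{d-1}(P)$; since orientability of a simplicial cell homology manifold means exactly $\beta_{d-1}(P)=1$, we obtain $h_d''(P)=1$ as well. Together these yield $h_0''=h_d''=1$ along with the full symmetry, establishing~(1).

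Finally, condition~(3)---that $\sum_{i=0}^d h_i''(P)$ is even whenever $h_i''(P)=0$ for some $1\leq i\leq d-1$---is the content of Corollary~\ref{2.3}, so it may be invoked verbatim. I do not expect a substantive obstacle in this argument: all the real work (the Dehn--Sommerville relations and Poincar\'e duality underlying the symmetry, and the face-ring socle computation of Lemma~\ref{2.2} underlying the parity) has already been carried out. The only step that is not literally a restatement of a prior result is the identification of the two boundary values $h_0''=h_d''=1$, and even that is immediate from the defining formula for $h''$ together with the orientability hypothesis.
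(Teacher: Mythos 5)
Your proposal is correct and follows essentially the same route as the paper, which states Theorem~\ref{2.4} as an immediate consequence of Theorem~\ref{2.1} (giving conditions (1) and (2)) and Corollary~\ref{2.3} (giving condition (3)). Your explicit check of the endpoint values, namely that $h_0''(P)=1$ by definition and $h_d''(P)=\beta_{d-1}(P)=1$ by the definition of the $h''$-vector together with the definition of orientability, is exactly the small step the paper leaves implicit.
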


\section{How to characterize $h''$-vectors}

In this section,
we show that to characterize $h''$-vectors of simplicial cell decompositions
of a manifold $M$,
it is enough to find simplicial cell decompositions of $M$ with minimal $h''$-vectors.
From now on, all manifolds are connected, compact and without boundary. 
In addition, we assume that all manifolds and homeomorphisms are piecewise linear (see \cite{Hu}).

Let $P$ and $Q$ be $(d-1)$-dimensional simplicial posets,
$\sigma \in P_d$ and $\tau \in Q_d$.
The connected sum of $P$ and $Q$ with respect to $\sigma$ and $\tau$ is the simplicial poset,
denoted $P\#_{\sigma,\tau} Q$ (or $P \# Q$ for short),
obtained from $P$ and $Q$ by by removing $\sigma$ and $\tau$ from $P$ and $Q$ and by
identifying $[\hat 0,\sigma] \setminus \{ \sigma\}$ and $[\hat 0,\tau] \setminus \{\tau\}$.
Thus, topologically, $P\#Q$ is obtained by removing $(d-1)$-cells $\sigma$ and $\tau$ from $P$ and $Q$ and gluing them along the boundaries of $\sigma$ and $\tau$.

\begin{lemma}
\label{3.1}
Let $P$ be a $(d-1)$-dimensional orientable simplicial cell homology manifold
and $Q$ a $(d-1)$-dimensional simplicial cell homology manifold.
Then $P \# Q$ is a homology manifold satisfying the following conditions
\begin{itemize}
\item[(i)]
$\beta_i(P\#Q)=\beta_i (P) + \beta_i(Q)$ for $i \ne d-1$ and $\beta_{d-1}(P\#Q)=\beta_{d-1}(Q)$.
\item[(ii)] $h_i''(P \#Q)=h_i''(P) + h_i''(Q)$
for $i \ne 0,d-1$ and $h_{d}''(P\#Q)=h''_d(Q)$.
\end{itemize}
\end{lemma}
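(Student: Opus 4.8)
The plan is to analyze the connected sum $P \#_{\sigma,\tau} Q$ at two levels: first topologically (to confirm it is a homology manifold and to compute its Betti numbers), and then combinatorially (to track how the $f$-vector, and hence the $h$-vector, changes, so that the $h''$-formula yields~(ii)). The guiding principle is that forming a connected sum removes two facets and glues along their shared boundary, so almost everything is additive, with small corrections concentrated in top degree.

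First I would verify that $P \# Q$ is a homology manifold. Away from the glued boundary, links are unchanged, so they remain homology spheres. The only vertices and faces needing attention are those on the identified copy of $[\hat 0,\sigma]\setminus\{\sigma\}$; for a vertex $v$ there, the link $(P\#Q)_{\geq v}$ is itself a connected sum of the two spheres $P_{\geq v}$ and $Q_{\geq v}$ (glued along the corresponding facet), and a connected sum of homology spheres is again a homology sphere. Since we assume manifolds are connected, $P\#Q$ is connected, so it is a homology manifold.

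\medskip

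Next I would establish the Betti numbers~(i). A clean way is the reduced Mayer--Vietoris sequence for the decomposition of $\Gamma(P\#Q)$ into $\Gamma(P)$ minus an open cell and $\Gamma(Q)$ minus an open cell, whose intersection is the common boundary sphere $\partial\sigma \cong S^{d-2}$. Removing an open top-cell from a closed orientable $(d-1)$-manifold $\Gamma(P)$ kills $\tilde H_{d-1}$ but leaves $\tilde H_i$ unchanged for $i<d-1$ (this is where orientability of $P$ is used: it guarantees $\tilde H_{d-2}$ is unaffected and the puncture only removes the single top class). Feeding these into Mayer--Vietoris, the $S^{d-2}$ class cancels appropriately and one reads off $\beta_i(P\#Q)=\beta_i(P)+\beta_i(Q)$ for $i\neq d-1$, while $\beta_{d-1}(P\#Q)=\beta_{d-1}(Q)$ (the single top class that survives is the one from $Q$, which need not be orientable).

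\medskip

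For~(ii) I would first compute the $f$-vector. Gluing identifies the two boundary complexes $[\hat 0,\sigma]\setminus\{\sigma\}$ and $[\hat 0,\tau]\setminus\{\tau\}$ and deletes $\sigma,\tau$ themselves, so $f_i(P\#Q)=f_i(P)+f_i(Q)-\binom{d}{i}$ for $0\le i\le d-1$ and $f_d(P\#Q)=f_d(P)+f_d(Q)-2$. Converting to $h$-vectors via the defining relation, the subtracted boolean term $\sum_i \binom{d}{i}t^i(1-t)^{d-i}=1$ contributes uniformly, and one obtains $h_k(P\#Q)=h_k(P)+h_k(Q)$ for $1\le k\le d$ together with $h_0(P\#Q)=1$. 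Finally I would plug these into the definition of $h''$, using the additivity $\beta_{k-1}(P\#Q)=\beta_{k-1}(P)+\beta_{k-1}(Q)$ from~(i) for $1\le k\le d-1$: the $h$-terms and the $\beta$-correction terms are then each additive, giving $h''_k(P\#Q)=h''_k(P)+h''_k(Q)$ for $1\le k\le d-1$, while in degree $d$ the formula reads $h''_d=\beta_{d-1}$, so~(i) forces $h''_d(P\#Q)=\beta_{d-1}(Q)=h''_d(Q)$. The constant term is $h''_0=1$ by convention, consistent with the claim that additivity holds only for $i\neq 0,d-1$.

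\medskip

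\textbf{Main obstacle.} The genuinely delicate point is the homology computation of the punctured manifolds, and specifically the exact bookkeeping in the Mayer--Vietoris sequence that explains the asymmetry between $P$ and $Q$ in~(i). One must use orientability of $P$ precisely to ensure that deleting an open cell from $\Gamma(P)$ removes its top homology without disturbing lower-degree homology, and then confirm that in the glued complex it is exactly the top class of $Q$ (not of $P$) that survives. Care is also needed to see that the boundary identification is well defined as simplicial posets and that the resulting links are honestly connected sums of spheres; this rests on the combinatorial structure of $[\hat 0,\sigma]$ being a boolean algebra, so the two facet boundaries are isomorphic and can be matched vertex-by-vertex.
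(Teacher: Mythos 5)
Your route is the same as the paper's: check the homology manifold property on links, get (i) from Mayer--Vietoris, compute $f(P\#Q)$, convert to $h$-vectors, and feed (i) into the definition of $h''$. But your $h$-vector step contains a genuine arithmetic error: $h_d$ is \emph{not} additive. From $f_i(P\#Q)=f_i(P)+f_i(Q)-{d \choose i}$ for $i\ne d$ and $f_d(P\#Q)=f_d(P)+f_d(Q)-2$, the polynomial subtracted from $\sum_i\bigl(f_i(P)+f_i(Q)\bigr)t^i(1-t)^{d-i}$ is
$$\sum_{i=0}^{d-1}{d \choose i}t^i(1-t)^{d-i}+2t^d=(1-t^d)+2t^d=1+t^d,$$
so both the constant and the top coefficient drop by one: $h_0(P\#Q)=1$, $h_k(P\#Q)=h_k(P)+h_k(Q)$ for $1\le k\le d-1$, but $h_d(P\#Q)=h_d(P)+h_d(Q)-1$, which is exactly what the paper records. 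Your claim of additivity for all $1\le k\le d$ is in fact inconsistent with your own endgame: the correction term in $h''_d$ involves only $\beta_0,\dots,\beta_{d-2}$, which are additive by (i), so additivity of $h_d$ would force $h''_d(P\#Q)=h''_d(P)+h''_d(Q)=1+\beta_{d-1}(Q)$, contradicting $h''_d(P\#Q)=\beta_{d-1}(P\#Q)=\beta_{d-1}(Q)$. Luckily your derivation of the degree-$d$ part of (ii) goes through the identity $h''_d=\beta_{d-1}$ and never uses $h_d$, so the stated conclusions survive; but the sentence asserting $h_k$-additivity up to $k=d$ must be repaired as above.

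Everything else matches the paper, which itself dismisses (i) with the words ``a simple Mayer--Vietoris argument,'' so your level of detail there is adequate; still, note that your phrase ``the $S^{d-2}$ class cancels appropriately'' conceals the one delicate case. When $Q$ is non-orientable over $K$ (so $\mathrm{char}\, K\ne 2$ matters), puncturing $Q$ does \emph{not} leave $\beta_{d-2}$ unchanged: the long exact sequence of the pair gives $\beta_{d-2}(Q\setminus\tau)=\beta_{d-2}(Q)+1$, and the class of the gluing sphere maps to a \emph{nonzero} class in the punctured $Q$; injectivity of the Mayer--Vietoris map then yields $\beta_{d-1}(P\#Q)=0=\beta_{d-1}(Q)$, and the extra unit in $\beta_{d-2}(Q\setminus\tau)$ is absorbed by the quotient, restoring $\beta_{d-2}(P\#Q)=\beta_{d-2}(P)+\beta_{d-2}(Q)$. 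Finally, your argument actually proves $h''$-additivity at $k=d-1$ as well, which is stronger than the stated (ii) and harmless.
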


\begin{proof}
It is straightforward that $P \# Q$ is a homology manifold.
(i)
follows from a simple Mayer--Vietoris argument.
Observe $f_i(P\#Q)=f_i(P)+f_i(Q)- { d \choose i} $
for $i \ne d$
and $f_d(P\#Q)=f_d(P)+f_d(Q)-2$.
Straightforward computations
show
$h_i(P\#Q)=h_i(P)+h_i(Q)$ for $i \ne d$
and $h_d(P\#Q)=h_d(P)+h_d(Q)-1$.
Then (ii) follows from (i) and the definition of $h''$-vectors.
\end{proof}

Let $M$ be a $(d-1)$-dimensional manifold.
We write $\hh (M)$ for the set of all $h''$-vectors of simplicial cell decompositions of $M$,
where we consider $h''$-vectors over a field of characteristic $2$ if $M$ is non-orientable.
For example, if $M =S^{d-1}$ the $(d-1)$-dimensional sphere,
then $\hh (M)$ is the set of all vectors $h \in \ZZ^{d+1}$
satisfying the conditions (1), (2) and (3) in Theorem \ref{1.1}.

\begin{corollary}
\label{3.2}
With the same notation as above,
if there is a simplicial cell decomposition $P$ of $M$ with $h''(P)=(1,0,\dots,0,1)$,
then $\hh(M)=\hh(S^{d-1})$.
\end{corollary}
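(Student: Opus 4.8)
The plan is to prove the two inclusions $\hh(M) \subseteq \hh(S^{d-1})$ and $\hh(S^{d-1}) \subseteq \hh(M)$ separately. The first is the necessity direction and is essentially Theorem \ref{2.4}. Indeed, any simplicial cell decomposition $P$ of $M$ has all its vertex links homeomorphic to spheres, hence is a simplicial cell homology manifold; moreover $P$ is orientable over the field $K$ fixed by our convention (if $M$ is orientable this holds over every field, while if $M$ is non-orientable we have taken $\mathrm{char}\,K=2$, and then $\beta_{d-1}(P)=1$ for any connected closed manifold). Thus Theorem \ref{2.4} shows that $h''(P)$ satisfies conditions (1), (2) and (3) of Theorem \ref{1.1}, i.e. $h''(P) \in \hh(S^{d-1})$.

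For the reverse inclusion, I would use the given decomposition $P$ with $h''(P)=(1,0,\dots,0,1)$ as a fixed base and realize an arbitrary target vector by a connected sum with a sphere. Fix $h=(h_0,\dots,h_d) \in \hh(S^{d-1})$. By Theorem \ref{1.1} there is a $(d-1)$-dimensional simplicial cell sphere $Q$ with $h(Q)=h$. Since $Q$ is a homology sphere, all its Betti numbers except $\beta_{d-1}(Q)=1$ vanish, so every correction term in the definition of the $h''$-vector is zero and $h''(Q)=h(Q)=h$.

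Now form $P\#_{\sigma,\tau}Q$ for any facets $\sigma \in P_d$ and $\tau \in Q_d$. Topologically this is $M \# S^{d-1}$, which is homeomorphic to $M$, so $P\#Q$ is a simplicial cell decomposition of $M$. The base $P$ is orientable because $h''_d(P)=\beta_{d-1}(P)=1$, so Lemma \ref{3.1} applies and gives $h''_i(P\#Q)=h''_i(P)+h''_i(Q)$ for $1 \le i \le d-1$ together with $h''_d(P\#Q)=h''_d(Q)$. Using $h''_0(P\#Q)=1$ and $h''_i(P)=0$ for $1 \le i \le d-1$, every coordinate of $h''(P\#Q)$ coincides with the corresponding coordinate of $h''(Q)=h$; in other words, the base $P$ acts as an identity for the connected-sum operation on the interior entries of the $h''$-vector. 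Hence $h \in \hh(M)$, and combining the two inclusions yields $\hh(M)=\hh(S^{d-1})$.

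The statement is largely a formal consequence of the machinery already in place, so the only points needing care are bookkeeping ones rather than genuine obstacles: first, verifying that in the non-orientable case working over a field of characteristic $2$ makes $P$ orientable in the homology-manifold sense, so that both Theorem \ref{2.4} and Lemma \ref{3.1} are legitimately available; and second, confirming that $M \# S^{d-1} \cong M$ in the piecewise linear category fixed at the beginning of this section. Neither is serious; the whole content of the corollary is carried by Theorem \ref{1.1}, Theorem \ref{2.4}, and the additivity in Lemma \ref{3.1}.
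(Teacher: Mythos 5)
Your proof is correct and follows essentially the same route as the paper: the inclusion $\hh(M)\subseteq\hh(S^{d-1})$ via Theorem \ref{2.4} (using characteristic $2$ in the non-orientable case), and the reverse inclusion by taking a simplicial cell sphere $Q$ with $h''(Q)=h(Q)=h$ from Theorem \ref{1.1} and forming $P\#Q$, with Lemma \ref{3.1} giving the desired $h''$-vector. The extra bookkeeping you flag (char-$2$ orientability, and that $P\#Q$ is again a decomposition of $M$) is exactly what the paper leaves implicit, and you handle it correctly.
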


\begin{proof}
Since any manifold is an orientable homology manifold over a field of characteristic $2$,
Theorem \ref{2.4} shows $\hh(M) \subset \hh(S^{d-1})$.
We prove the reverse inclusion.
Let $h \in \hh(S^{d-1})$.
There exists a $(d-1)$-dimensional simplicial cell sphere $Q$ with $h''(Q)=h(Q)=h$ by Theorem \ref{1.1}.
Then $P\#Q$ is a simplicial cell decomposition of $M$ with the desired $h''$-vector by Lemma \ref{3.1}.
\end{proof}

Corollary \ref{3.2}
shows that the existence of  a simplicial cell decomposition $P$ of $M$
with $h''(P)=(1,0,\dots,0,1)$ 
induces a characterization of face vectors of simplicial
cell decompositions of $M$.

We define a partial order  $>_P$ on $\hh (S^{d-1})$ by,
for $h,h' \in \hh (S^{d-1})$,
 $h>_P h'$ if $h-h'+(1,0,\dots,0,1) \in \hh(S^{d-1})$.
The proof of Corollary \ref{3.2}
says that, to characterize $h''$-vectors of simplicial cell decompositions of $M$,
it is enough to find all minimal elements of $\hh (M)$ with respect to $>_P$.
This fact suggests the following problems.

\begin{problem}
\label{3.3}
For a given manifold $M$,
find all minimal elements in $\hh(M)$.
\end{problem}

\begin{problem}
\label{3.4}
For which manifold $M$,
$\hh(M)$ possess the unique minimal element?
In particular, for which $M$, one has $(1,0,\dots,0,1) \in \hh(M)$?
\end{problem}

For present, we do not even have an example of a manifold $M$ such that $\hh(M)$ has more than two minimal elements.
Note that if $M$ is a Poincar\'e sphere, then $(1,0,\dots,0,1) \not \in \hh(M)$.
\smallskip

\begin{example}
Figure 1 is a simplicial cell complex that presents $S^1 \times S^1$.
(Identify parallel edges of the square.)
\begin{center}
%WinTpicVersion3.08
\unitlength 0.1in
\begin{picture}( 16.6500, 15.4500)( 19.9500,-16.6000)
% LINE 2 0 3 0
% 8 2400 400 2400 1600 2400 1600 3600 1600 3600 1600 3600 400 3600 400 2400 400
% 
\special{pn 8}%
\special{pa 2400 400}%
\special{pa 2400 1600}%
\special{fp}%
\special{pa 2400 1600}%
\special{pa 3600 1600}%
\special{fp}%
\special{pa 3600 1600}%
\special{pa 3600 400}%
\special{fp}%
\special{pa 3600 400}%
\special{pa 2400 400}%
\special{fp}%
% LINE 2 0 3 0
% 12 2400 1000 3000 400 3000 1600 3000 1600 3000 1600 2400 1000 3000 1600 3600 1000 3000 1600 3600 400 3600 400 2400 1000
% 
\special{pn 8}%
\special{pa 2400 1000}%
\special{pa 3000 400}%
\special{fp}%
\special{pa 3000 1600}%
\special{pa 3000 1600}%
\special{fp}%
\special{pa 3000 1600}%
\special{pa 2400 1000}%
\special{fp}%
\special{pa 3000 1600}%
\special{pa 3600 1000}%
\special{fp}%
\special{pa 3000 1600}%
\special{pa 3600 400}%
\special{fp}%
\special{pa 3600 400}%
\special{pa 2400 1000}%
\special{fp}%
% CIRCLE 2 0 2 0
% 4 2400 400 2400 340 2400 340 2400 340
% 
\special{pn 8}%
\special{sh 0}%
\special{ar 2400 400 60 60  0.0000000 6.2831853}%
% STR 2 0 3 0
% 3 2400 300 2400 400 5 0
% {\tiny 1}
\put(24.0000,-4.0000){\makebox(0,0){{\tiny 1}}}%
% CIRCLE 2 0 2 0
% 4 2400 1000 2400 940 2400 940 2400 940
% 
\special{pn 8}%
\special{sh 0}%
\special{ar 2400 1000 60 60  0.0000000 6.2831853}%
% CIRCLE 2 0 2 0
% 4 2400 1600 2400 1540 2400 1540 2400 1540
% 
\special{pn 8}%
\special{sh 0}%
\special{ar 2400 1600 60 60  0.0000000 6.2831853}%
% CIRCLE 2 0 2 0
% 4 3000 1600 3000 1540 3000 1540 3000 1540
% 
\special{pn 8}%
\special{sh 0}%
\special{ar 3000 1600 60 60  0.0000000 6.2831853}%
% CIRCLE 2 0 2 0
% 4 3600 1600 3600 1540 3600 1540 3600 1540
% 
\special{pn 8}%
\special{sh 0}%
\special{ar 3600 1600 60 60  0.0000000 6.2831853}%
% CIRCLE 2 0 2 0
% 4 3600 1000 3600 940 3600 940 3600 940
% 
\special{pn 8}%
\special{sh 0}%
\special{ar 3600 1000 60 60  0.0000000 6.2831853}%
% CIRCLE 2 0 2 0
% 4 3600 400 3600 340 3600 340 3600 340
% 
\special{pn 8}%
\special{sh 0}%
\special{ar 3600 400 60 60  0.0000000 6.2831853}%
% CIRCLE 2 0 2 0
% 4 3000 400 3000 340 3000 340 3000 340
% 
\special{pn 8}%
\special{sh 0}%
\special{ar 3000 400 60 60  0.0000000 6.2831853}%
% STR 2 0 3 0
% 3 2400 1500 2400 1600 5 0
% {\tiny 1}
\put(24.0000,-16.0000){\makebox(0,0){{\tiny 1}}}%
% STR 2 0 3 0
% 3 3600 1500 3600 1600 5 0
% {\tiny 1}
\put(36.0000,-16.0000){\makebox(0,0){{\tiny 1}}}%
% STR 2 0 3 0
% 3 3600 300 3600 400 5 0
% {\tiny 1}
\put(36.0000,-4.0000){\makebox(0,0){{\tiny 1}}}%
% STR 2 0 3 0
% 3 3000 300 3000 400 5 0
% {\tiny 3}
\put(30.0000,-4.0000){\makebox(0,0){{\tiny 3}}}%
% STR 2 0 3 0
% 3 3000 1500 3000 1600 5 0
% {\tiny 3}
\put(30.0000,-16.0000){\makebox(0,0){{\tiny 3}}}%
% STR 2 0 3 0
% 3 2400 900 2400 1000 5 0
% {\tiny 2}
\put(24.0000,-10.0000){\makebox(0,0){{\tiny 2}}}%
% STR 2 0 3 0
% 3 3600 900 3600 1000 5 0
% {\tiny 2}
\put(36.0000,-10.0000){\makebox(0,0){{\tiny 2}}}%
% STR 2 0 3 0
% 3 3000 100 3000 200 5 0
% Figure 1
\put(30.0000,-2.0000){\makebox(0,0){Figure 1}}%
\end{picture}%
\end{center}
Its $f$-vector is $f=(1,3,9,6)$ and its $h$-vector is $h=(1,0,6,-1)$.
Since $\beta_1(S^1 \times S^1)=2$,
the $h''$-vector is $h''=h-2(0,0,3,-1)=(1,0,0,1)$.
\end{example}

\section{Graphical simplicial posets}

To study problems given in the previous section,
it is important to have a good construction of simplicial cell homology manifolds.
We use graph theoretic approach called \textit{crystallizations}.
In this section, we briefly introduce crystallization theory.
Most statements of this section are not new,
but we rewrite it to adapt the theory to simplicial posets.
A good survey of crystallization theory is \cite{FGG}.

Let $G=(V,E,\phi)$ be a (finite) multi-graph (without loops),
where $V$ is a finite set of vertices,
$E$ is a finite set of edges and $\phi$ is a function that assigns to each edge $e \in E$
a $2$-elements set of vertices $\phi(e)\subset V$.
For an integer $d \geq 1$,
a pair $\Lambda=(G,\gamma)$ of a graph $G=(V,E,\phi)$
and a map $\gamma: E \to [d]=\{1,2,\dots,d\}$ is called a \textit{$d$-colored multi-graph}.
For a $d$-colored multi-graph $\Lambda=(G,\gamma)$ and $S \subset [d]$,
let 
$$E_S=\big\{e \in E: \gamma(e) \in S\big\}$$
and
$$G_S=(V,E_S, \phi_S),$$
where $\phi_S$ is the restriction of $\phi$ to $E_S$.
Thus $G_S$ is the multi-graph whose edges are the edges in $G$ having color $i \in S$.
We say that a $d$-colored multi-graph $\Lambda=(G,\gamma)$ is \textit{admissible} if it satisfies the following conditions:
\begin{itemize}
\item[(a)] $G$ is connected.
\item[(b)] for each $i \in [d]$, $G_{\{i\}}$ is a complete matching on $V$.
In other words, all edges in $G_{\{i\}}$ are vertex-disjoint and every vertex in $V$
is a vertex of an edge of $G_{\{i\}}$.
\end{itemize}
Note that the number of the vertices of $G$ must be even by (b).

For an admissible $d$-colored multi-graph $\Lambda$,
we define a poset $P_\Lambda$ such that its elements are the pairs $(H,S)$ of a connected component $H$
of $G_S$ and a subset $S \subset [d]$
and the order on $P_\Lambda$ is defined by
$$(H,S) \geq (H',S')\ \ \Leftrightarrow \ \ S \subset S' \mbox{ and $H$ is a subgraph of $H'$}.$$
Thus $H$ consists of a single vertex of $G$ if $S=\emptyset$ (since $G_{\emptyset}=(V,\emptyset,\phi_\emptyset)$)
and $H$ consists of a single edge if $S=\{i\}$ since $G_{\{i\}}$ 
is a matching.
Figure 2 is an example of an admissible $3$-colored multi-graph $\Lambda$
and the poset $P_\Lambda$.
(This $P_\Lambda$ is the simplicial cell decomposition of $S^1 \times S^1$
given in Figure 1.)
Many examples of admissible colored graphs that present manifolds can be found in \cite{FGG}.
\smallskip

\begin{center}
%WinTpicVersion3.08
\unitlength 0.1in
\begin{picture}( 54.5500, 22.0000)(  3.9500,-23.1500)
% LINE 1 0 3 0
% 6 800 600 1600 1400 1600 600 800 1400 800 2200 1600 2200
% 
\special{pn 13}%
\special{pa 800 600}%
\special{pa 1600 1400}%
\special{fp}%
\special{pa 1600 600}%
\special{pa 800 1400}%
\special{fp}%
\special{pa 800 2200}%
\special{pa 1600 2200}%
\special{fp}%
% LINE 1 2 3 0
% 6 800 600 1600 600 1600 1400 800 2200 800 1400 1600 2200
% 
\special{pn 13}%
\special{pa 800 600}%
\special{pa 1600 600}%
\special{dt 0.045}%
\special{pa 1600 1400}%
\special{pa 800 2200}%
\special{dt 0.045}%
\special{pa 800 1400}%
\special{pa 1600 2200}%
\special{dt 0.045}%
% LINE 2 1 3 0
% 6 800 600 1600 2200 800 2200 1600 600 1600 1400 800 1400
% 
\special{pn 8}%
\special{pa 800 600}%
\special{pa 1600 2200}%
\special{da 0.070}%
\special{pa 800 2200}%
\special{pa 1600 600}%
\special{da 0.070}%
\special{pa 1600 1400}%
\special{pa 800 1400}%
\special{da 0.070}%
% STR 2 0 3 0
% 3 2000 1300 2000 1400 5 0
% $\Rightarrow$
\put(20.0000,-14.0000){\makebox(0,0){$\Rightarrow$}}%
% CIRCLE 2 0 2 0
% 4 800 600 800 550 800 550 800 550
% 
\special{pn 8}%
\special{sh 0}%
\special{ar 800 600 50 50  0.0000000 6.2831853}%
% CIRCLE 2 0 2 0
% 4 800 1400 800 1350 800 1350 800 1350
% 
\special{pn 8}%
\special{sh 0}%
\special{ar 800 1400 50 50  0.0000000 6.2831853}%
% CIRCLE 2 0 2 0
% 4 800 2200 800 2150 800 2150 800 2150
% 
\special{pn 8}%
\special{sh 0}%
\special{ar 800 2200 50 50  0.0000000 6.2831853}%
% CIRCLE 2 0 2 0
% 4 1600 2200 1600 2150 1600 2150 1600 2150
% 
\special{pn 8}%
\special{sh 0}%
\special{ar 1600 2200 50 50  0.0000000 6.2831853}%
% CIRCLE 2 0 2 0
% 4 1600 1400 1600 1350 1600 1350 1600 1350
% 
\special{pn 8}%
\special{sh 0}%
\special{ar 1600 1400 50 50  0.0000000 6.2831853}%
% CIRCLE 2 0 2 0
% 4 1600 600 1600 550 1600 550 1600 550
% 
\special{pn 8}%
\special{sh 0}%
\special{ar 1600 600 50 50  0.0000000 6.2831853}%
% STR 2 0 3 0
% 3 800 500 800 600 5 0
% {\tiny 1}
\put(8.0000,-6.0000){\makebox(0,0){{\tiny 1}}}%
% STR 2 0 3 0
% 3 800 1300 800 1400 5 0
% {\tiny 2}
\put(8.0000,-14.0000){\makebox(0,0){{\tiny 2}}}%
% STR 2 0 3 0
% 3 800 2100 800 2200 5 0
% {\tiny 3}
\put(8.0000,-22.0000){\makebox(0,0){{\tiny 3}}}%
% STR 2 0 3 0
% 3 1600 2100 1600 2200 5 0
% {\tiny 6}
\put(16.0000,-22.0000){\makebox(0,0){{\tiny 6}}}%
% STR 2 0 3 0
% 3 1600 1300 1600 1400 5 0
% {\tiny 5}
\put(16.0000,-14.0000){\makebox(0,0){{\tiny 5}}}%
% STR 2 0 3 0
% 3 1600 500 1600 600 5 0
% {\tiny 4}
\put(16.0000,-6.0000){\makebox(0,0){{\tiny 4}}}%
% LINE 2 0 3 0
% 6 2700 400 2600 1000 2700 400 3800 1000 3300 400 3300 400
% 
\special{pn 8}%
\special{pa 2700 400}%
\special{pa 2600 1000}%
\special{fp}%
\special{pa 2700 400}%
\special{pa 3800 1000}%
\special{fp}%
\special{pa 3300 400}%
\special{pa 3300 400}%
\special{fp}%
% LINE 2 0 3 0
% 2 3300 400 3000 1000
% 
\special{pn 8}%
\special{pa 3300 400}%
\special{pa 3000 1000}%
\special{fp}%
% LINE 2 0 3 0
% 2 2600 1000 5100 400
% 
\special{pn 8}%
\special{pa 2600 1000}%
\special{pa 5100 400}%
\special{fp}%
% LINE 2 0 3 0
% 2 3000 1000 4500 400
% 
\special{pn 8}%
\special{pa 3000 1000}%
\special{pa 4500 400}%
\special{fp}%
% LINE 2 0 3 0
% 10 3400 1000 3900 400 5700 400 3400 1000 3800 1000 4500 400 4200 1000 3300 400 5700 400 4200 1000
% 
\special{pn 8}%
\special{pa 3400 1000}%
\special{pa 3900 400}%
\special{fp}%
\special{pa 5700 400}%
\special{pa 3400 1000}%
\special{fp}%
\special{pa 3800 1000}%
\special{pa 4500 400}%
\special{fp}%
\special{pa 4200 1000}%
\special{pa 3300 400}%
\special{fp}%
\special{pa 5700 400}%
\special{pa 4200 1000}%
\special{fp}%
% LINE 2 0 3 0
% 4 4600 1000 3900 400 5100 400 4600 1000
% 
\special{pn 8}%
\special{pa 4600 1000}%
\special{pa 3900 400}%
\special{fp}%
\special{pa 5100 400}%
\special{pa 4600 1000}%
\special{fp}%
% LINE 2 0 3 0
% 4 2700 400 5000 1000 5000 1000 5700 400
% 
\special{pn 8}%
\special{pa 2700 400}%
\special{pa 5000 1000}%
\special{fp}%
\special{pa 5000 1000}%
\special{pa 5700 400}%
\special{fp}%
% CIRCLE 2 0 2 0
% 4 2700 400 2700 350 2700 350 2700 350
% 
\special{pn 8}%
\special{sh 0}%
\special{ar 2700 400 50 50  0.0000000 6.2831853}%
% CIRCLE 2 0 2 0
% 4 5700 400 5700 350 5700 350 5700 350
% 
\special{pn 8}%
\special{sh 0}%
\special{ar 5700 400 50 50  0.0000000 6.2831853}%
% STR 2 0 3 0
% 3 5700 300 5700 400 5 0
% {\tiny 6}
\put(57.0000,-4.0000){\makebox(0,0){{\tiny 6}}}%
% STR 2 0 3 0
% 3 2700 300 2700 400 5 0
% {\tiny 1}
\put(27.0000,-4.0000){\makebox(0,0){{\tiny 1}}}%
% STR 2 0 3 0
% 3 3000 100 3000 200 5 0
% Figure 2
\put(30.0000,-2.0000){\makebox(0,0){Figure 2}}%
% STR 2 0 3 0
% 3 1200 2300 1200 2400 5 0
% $\Lambda$
\put(12.0000,-24.0000){\makebox(0,0){$\Lambda$}}%
% STR 2 0 3 0
% 3 4200 2300 4200 2400 5 0
% $P_\Lambda$
\put(42.0000,-24.0000){\makebox(0,0){$P_\Lambda$}}%
% LINE 2 0 3 0
% 4 5400 1000 3300 400 5100 400 5400 1000
% 
\special{pn 8}%
\special{pa 5400 1000}%
\special{pa 3300 400}%
\special{fp}%
\special{pa 5100 400}%
\special{pa 5400 1000}%
\special{fp}%
% LINE 2 0 3 0
% 4 5800 1000 3900 400 4500 400 5800 1000
% 
\special{pn 8}%
\special{pa 5800 1000}%
\special{pa 3900 400}%
\special{fp}%
\special{pa 4500 400}%
\special{pa 5800 1000}%
\special{fp}%
% CIRCLE 2 0 2 0
% 4 3305 405 3305 355 3305 355 3305 355
% 
\special{pn 8}%
\special{sh 0}%
\special{ar 3306 406 50 50  0.0000000 6.2831853}%
% STR 2 0 3 0
% 3 3305 305 3305 405 5 0
% {\tiny 2}
\put(33.0500,-4.0500){\makebox(0,0){{\tiny 2}}}%
% CIRCLE 2 0 2 0
% 4 3905 405 3905 355 3905 355 3905 355
% 
\special{pn 8}%
\special{sh 0}%
\special{ar 3906 406 50 50  0.0000000 6.2831853}%
% STR 2 0 3 0
% 3 3905 305 3905 405 5 0
% {\tiny 3}
\put(39.0500,-4.0500){\makebox(0,0){{\tiny 3}}}%
% CIRCLE 2 0 2 0
% 4 4505 405 4505 355 4505 355 4505 355
% 
\special{pn 8}%
\special{sh 0}%
\special{ar 4506 406 50 50  0.0000000 6.2831853}%
% STR 2 0 3 0
% 3 4505 305 4505 405 5 0
% {\tiny 4}
\put(45.0500,-4.0500){\makebox(0,0){{\tiny 4}}}%
% CIRCLE 2 0 2 0
% 4 5105 405 5105 355 5105 355 5105 355
% 
\special{pn 8}%
\special{sh 0}%
\special{ar 5106 406 50 50  0.0000000 6.2831853}%
% STR 2 0 3 0
% 3 5105 305 5105 405 5 0
% {\tiny 5}
\put(51.0500,-4.0500){\makebox(0,0){{\tiny 5}}}%
% LINE 2 0 3 0
% 6 3000 1600 4200 2200 4200 1600 4200 2200 4200 2200 5400 1600
% 
\special{pn 8}%
\special{pa 3000 1600}%
\special{pa 4200 2200}%
\special{fp}%
\special{pa 4200 1600}%
\special{pa 4200 2200}%
\special{fp}%
\special{pa 4200 2200}%
\special{pa 5400 1600}%
\special{fp}%
% LINE 2 0 3 0
% 24 2600 1000 3000 1600 4200 1600 2600 1000 3000 1000 3000 1600 3000 1000 4200 1600 3400 1000 4200 1600 3000 1600 3400 1000 3800 1000 3000 1600 3800 1000 5400 1600 5400 1600 4200 1000 3000 1600 4200 1000 4600 1000 5400 1600 4600 1000 3000 1600
% 
\special{pn 8}%
\special{pa 2600 1000}%
\special{pa 3000 1600}%
\special{fp}%
\special{pa 4200 1600}%
\special{pa 2600 1000}%
\special{fp}%
\special{pa 3000 1000}%
\special{pa 3000 1600}%
\special{fp}%
\special{pa 3000 1000}%
\special{pa 4200 1600}%
\special{fp}%
\special{pa 3400 1000}%
\special{pa 4200 1600}%
\special{fp}%
\special{pa 3000 1600}%
\special{pa 3400 1000}%
\special{fp}%
\special{pa 3800 1000}%
\special{pa 3000 1600}%
\special{fp}%
\special{pa 3800 1000}%
\special{pa 5400 1600}%
\special{fp}%
\special{pa 5400 1600}%
\special{pa 4200 1000}%
\special{fp}%
\special{pa 3000 1600}%
\special{pa 4200 1000}%
\special{fp}%
\special{pa 4600 1000}%
\special{pa 5400 1600}%
\special{fp}%
\special{pa 4600 1000}%
\special{pa 3000 1600}%
\special{fp}%
% LINE 2 0 3 0
% 12 5400 1600 5800 1000 5800 1000 4200 1600 4200 1600 5000 1000 5000 1000 5400 1600 5400 1000 5400 1600 5400 1000 4200 1600
% 
\special{pn 8}%
\special{pa 5400 1600}%
\special{pa 5800 1000}%
\special{fp}%
\special{pa 5800 1000}%
\special{pa 4200 1600}%
\special{fp}%
\special{pa 4200 1600}%
\special{pa 5000 1000}%
\special{fp}%
\special{pa 5000 1000}%
\special{pa 5400 1600}%
\special{fp}%
\special{pa 5400 1000}%
\special{pa 5400 1600}%
\special{fp}%
\special{pa 5400 1000}%
\special{pa 4200 1600}%
\special{fp}%
% CIRCLE 2 0 2 0
% 4 5800 1000 5800 950 5800 950 5800 950
% 
\special{pn 8}%
\special{sh 0}%
\special{ar 5800 1000 50 50  0.0000000 6.2831853}%
% CIRCLE 2 0 2 0
% 4 5400 1000 5400 950 5400 950 5400 950
% 
\special{pn 8}%
\special{sh 0}%
\special{ar 5400 1000 50 50  0.0000000 6.2831853}%
% CIRCLE 2 0 2 0
% 4 5000 1000 5000 950 5000 950 5000 950
% 
\special{pn 8}%
\special{sh 0}%
\special{ar 5000 1000 50 50  0.0000000 6.2831853}%
% CIRCLE 2 0 2 0
% 4 4600 1000 4600 950 4600 950 4600 950
% 
\special{pn 8}%
\special{sh 0}%
\special{ar 4600 1000 50 50  0.0000000 6.2831853}%
% CIRCLE 2 0 2 0
% 4 4200 1000 4200 950 4200 950 4200 950
% 
\special{pn 8}%
\special{sh 0}%
\special{ar 4200 1000 50 50  0.0000000 6.2831853}%
% CIRCLE 2 0 2 0
% 4 3800 1000 3800 950 3800 950 3800 950
% 
\special{pn 8}%
\special{sh 0}%
\special{ar 3800 1000 50 50  0.0000000 6.2831853}%
% CIRCLE 2 0 2 0
% 4 3400 1000 3400 950 3400 950 3400 950
% 
\special{pn 8}%
\special{sh 0}%
\special{ar 3400 1000 50 50  0.0000000 6.2831853}%
% CIRCLE 2 0 2 0
% 4 3000 1000 3000 950 3000 950 3000 950
% 
\special{pn 8}%
\special{sh 0}%
\special{ar 3000 1000 50 50  0.0000000 6.2831853}%
% CIRCLE 2 0 2 0
% 4 2600 1000 2600 950 2600 950 2600 950
% 
\special{pn 8}%
\special{sh 0}%
\special{ar 2600 1000 50 50  0.0000000 6.2831853}%
% CIRCLE 2 0 2 0
% 4 3000 1600 3000 1550 3000 1550 3000 1550
% 
\special{pn 8}%
\special{sh 0}%
\special{ar 3000 1600 50 50  0.0000000 6.2831853}%
% CIRCLE 2 0 2 0
% 4 4200 1600 4200 1550 4200 1550 4200 1550
% 
\special{pn 8}%
\special{sh 0}%
\special{ar 4200 1600 50 50  0.0000000 6.2831853}%
% CIRCLE 2 0 2 0
% 4 5400 1600 5400 1550 5400 1550 5400 1550
% 
\special{pn 8}%
\special{sh 0}%
\special{ar 5400 1600 50 50  0.0000000 6.2831853}%
% CIRCLE 2 0 2 0
% 4 4200 2200 4200 2150 4200 2150 4200 2150
% 
\special{pn 8}%
\special{sh 0}%
\special{ar 4200 2200 50 50  0.0000000 6.2831853}%
\end{picture}%
\end{center}
\bigskip

It is not hard to see that $P_\Lambda$ is simplicial.
Indeed, for $(H,S) \in P_\Lambda$ and for every $S' \supset S$,
there is the unique connected component $H'$ of $G_{S'}$ which contains $H$.
Also, it is clear that $(G,[d])$ is the unique minimal element of $P_\Lambda$.
These facts show that the interval $[(G,[d]), (H,S)]$
is isomorphic to the poset of the set of all subsets of $[d] \setminus S$
ordered by inclusion. Hence $P_\Lambda$ is simplicial.
Moreover, $\rank (H,S)=d-\# S$ and $\dim P_\Lambda = d-1$,
where $\# X$ is the cardinality of a finite set $X$.

We say that a simplicial poset $P$ is \textit{graphical} if there exists an admissible colored multi-graph $\Lambda$ such that $P$
is isomorphic to $P_\Lambda$ as posets.
In the rest of this section, we study which simplicial posets are graphical.

A $(d-1)$-dimensional simplicial poset is said to be a \textit{pseudomanifold (without boundary)} if $P$ satisfies the following conditions:
\begin{itemize}
\item[(i)] $P$ is \textit{pure}.
\item[(ii)] every element $\sigma \in P_{d-1}$ is covered by exactly two elements in $P_d$.
\item[(iii)] $P$ is \textit{strongly connected}. In other words, for all $\sigma, \tau \in P_d$,
there is a sequence $\sigma =\sigma_1,\sigma_2,\dots,\sigma_p=\tau$ of elements of $P_d$
such that the meet $\sigma_i \wedge \sigma_{i+1}$ of $\sigma_i$ and $\sigma_{i+1}$ has rank $d-1$ for all $i$. 
\end{itemize}
Moreover, a pseudomanifold $P$ is said to be \textit{normal} if for every $\sigma \in P$
with $\rank \sigma \leq d-2$,
its link $P_{\geq \sigma}$ is connected (as CW-complexes).
It is not hard to see that a link of a normal pseudomanifold is again a normal pseudomanifold
(see \cite[p.\ 331]{BD}).

Observe that if $P_\Lambda$ is a graphical simplicial poset and $(H,S) \in P_\Lambda$
then the link $(P_\Lambda)_{\geq (H,S)}$ is the graphical simplicial poset $P_{\Lambda'}$
of the graph $\Lambda'=(H,\gamma_H)$,
where $\gamma_H$ is the restriction of the coloring map of $\Lambda$ to $H$.
Then it is straightforward that any graphical simplicial poset is a normal pseudomanifold.

Another combinatorial property of graphical simplicial posets is the fact that it has a nice
coloring on their vertices.
Let $P$ be a simplicial poset.
Recall that $V(\sigma)=\{v \in P_1: v \leq \sigma\}$, where $\sigma \in P$,
is the set of vertices of $\sigma$.
We say that $P$ is \textit{$d$-colored} if there exists a map $\psi: P_1 \to [d]$
such that, for every $\sigma \in P$, $\psi(u) \ne \psi(v)$ for all $u,v \in V(\sigma)$ with $u \ne v$.
Recall that any vertex of a $(d-1)$-dimensional graphical simplicial poset $P_\Lambda$ is an element of the form
$(H,[d] \setminus \{i\})$.
Then $P_\Lambda$ is $d$-colored
by defining $\psi((H,[d] \setminus \{i\}))=i$.

\begin{proposition}
A $(d-1)$-dimensional simplicial poset $P$ is graphical if and only if
$P$ is a $d$-colored normal pseudomanifold.
\end{proposition}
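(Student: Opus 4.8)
The forward implication is already in hand: the paragraphs preceding the statement show that every graphical poset $P_\Lambda$ is a $d$-colored normal pseudomanifold, so only the converse requires work. The plan is, given a $d$-colored normal pseudomanifold $P$ with coloring $\psi\colon P_1\to[d]$, to construct an admissible $d$-colored multi-graph $\Lambda=(G,\gamma)$ with $P\cong P_\Lambda$. I take the vertices of $G$ to be $V=P_d$ (the facets) and the edges to be $E=P_{d-1}$ (the ridges); for a ridge $\rho$ I set $\phi(\rho)$ equal to the two-element set of facets covering $\rho$, which is legitimate precisely by pseudomanifold condition (ii). To color the edges I use that, by purity together with the $d$-coloring property, every facet has exactly $d$ vertices carrying all $d$ colors; hence a ridge $\rho$ omits exactly one color, and I set $\gamma(\rho)$ to be that missing color, $\{\gamma(\rho)\}=[d]\setminus\psi(V(\rho))$.

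Next I would verify that $\Lambda$ is admissible. Connectedness of $G$ (condition (a)) is literally the strong connectedness (iii) of $P$, since an edge of $G$ joins two facets sharing a ridge. For condition (b) I fix a color $i$ and a facet $\tau$: inside the boolean interval $[\hat 0,\tau]$ there is exactly one ridge obtained by deleting the unique vertex of color $i$, so $\tau$ meets exactly one edge of color $i$; since every color-$i$ edge meets two facets and every facet meets exactly one such edge, $G_{\{i\}}$ is a complete matching.

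The heart of the argument is to produce the isomorphism $\Phi\colon P\to P_\Lambda$. For $\sigma\in P$ I set $S_\sigma=[d]\setminus\psi(V(\sigma))$, let $F(\sigma)$ be the set of facets $\ge\sigma$, and send $\sigma$ to $(H_\sigma,S_\sigma)$, where $H_\sigma$ is the subgraph of $G_{S_\sigma}$ on vertex set $F(\sigma)$ with edges the ridges $\ge\sigma$. The crucial claim is that $H_\sigma$ is a \emph{full} connected component of $G_{S_\sigma}$. Internal connectedness comes from normality: the link $P_{\ge\sigma}$ is again a normal pseudomanifold, hence strongly connected, and its strong connectedness says exactly that the facets above $\sigma$ are joined through ridges above $\sigma$, i.e.\ through edges of $G_{S_\sigma}$. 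Maximality (no color-$S_\sigma$ edge escapes $F(\sigma)$) is where the boolean structure enters: if $\tau\ge\sigma$ and $\rho\le\tau$ is a ridge with $\gamma(\rho)\in S_\sigma$, then the deleted vertex of $\tau$ has a color not occurring in $\sigma$, so $V(\sigma)\subseteq V(\rho)$; reading this inside the boolean algebra $[\hat 0,\tau]$ gives $\sigma\le\rho$, whence the second facet on $\rho$ again lies above $\sigma$.

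Finally I would check that $\Phi$ is an order-isomorphism. Ranks match, since $\rank(H_\sigma,S_\sigma)=d-\#S_\sigma=\#V(\sigma)$. Both $\Phi$ and its inverse are order-preserving: comparability of faces translates into inclusion of the color complements $S_\sigma$ and of the facet-sets $F(\sigma)$, and in the reverse direction one passes to a common facet $\tau\in F(\sigma)$ and reads off containment of vertex sets inside $[\hat 0,\tau]$. Injectivity and surjectivity follow the same way: two faces below a common facet are distinguished by their vertex colors, and any component of $G_{S}$ is hit by the face of any one of its facets obtained by deleting the vertices whose colors lie in $S$. I expect the main obstacle to be this recurring point: because a simplicial poset (unlike a simplicial complex) may carry several faces with the same vertex set, every passage from equal or contained vertex colors to equal or comparable faces must be carried out inside a boolean interval $[\hat 0,\tau]$, and the role of normality is precisely to keep each fibre $F(\sigma)$ connected so that $H_\sigma$ is well defined.
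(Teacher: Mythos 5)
Your proposal is correct and follows essentially the same route as the paper: the same graph (vertices $V=P_d$, edges $E=P_{d-1}$ colored by the missing vertex color), the same admissibility check via strong connectedness and the unique color-omitting ridge in each boolean interval $[\hat 0,\tau]$, and the same map $\Phi$, with normality supplying strong connectedness of the links $P_{\geq\sigma}$ and the boolean structure converting color containments into order relations. The only difference is organizational: you establish upfront that $H_\sigma$ is a full connected component of $G_{S_\sigma}$, whereas the paper distributes exactly the same two facts (internal connectedness of $F(\sigma)$, and that every edge of color in $S_\sigma$ meeting $F(\sigma)$ lies above $\sigma$) across its well-definedness and injectivity steps.
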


\begin{proof}
We already proved the ``only if" part.
We prove that if $P$ is a $d$-colored normal pseudomanifold then there exists an admissible $d$-colored
multi-graph $\Lambda=(G,\gamma)$ such that $P$ is isomorphic to $P_\Lambda$.

Let $G=(V,E,\phi)$ be the multi-graph such that
$V=P_d$, $E=P_{d-1}$ and, for any $\sigma \in E$,
$\phi(\sigma)$ is the set of the elements in $P_d$ which cover $\sigma$
(this is well-defined by the condition (ii) of pseudomanifolds).
Let $\psi: P_1 \to [d]$ be a coloring map of $P$.
Define a $d$-colored multi-graph $\Lambda=(G,\gamma)$ by setting
$\gamma(\sigma)$ to be the integer $i \in [d]$ such that $i \not \in \{ \psi(v): v \in V(\sigma)\}$, where $\sigma \in E = P_{d-1}$.

Since $P$ is strongly connected, the graph $G$ is connected.
Also, each $\Lambda_{\{i\}}$ is a complete matching since for every vertex $\sigma \in V=P_d$ and $i \in [d]$,
there is the unique $\tau \in P_{d-1}$ with $\sigma \geq \tau$ and $\{\psi(v): v \in V(\tau)\}=[d]\setminus \{i\}$.
Hence $\Lambda$ is admissible.

We claim that $P$ is isomorphic to $P_\Lambda$ as posets.
Let $\sigma \in P$ and $S =\{ \psi(v): v \in V(\sigma)\}$.
Choose a facet $\sigma' \in P_d$ with $\sigma' \geq \sigma$.
Then there exists the unique connected component $H$ of $\Lambda_{[d]\setminus S}$
which contains the vertex $\sigma' \in V=P_d$ of $\Lambda$.
We define
$$\Phi(\sigma)=(H,[d]\setminus S).$$
This $\Phi(\sigma)$ do not depend on the choice of $\sigma' \in P_d$
with $\sigma' \geq \sigma$.
Indeed, for $\sigma'' \in P_d$ with $\sigma'' \geq \sigma$,
since $P_{\geq \sigma}$ is strongly connected there exist edges $\tau_1,\dots,\tau_m \in (P_{\geq \sigma})_{d-1 -\rank \sigma} \subset E$ which connect $\sigma''$ and $\sigma'$.
Since $\tau_i \in P_{\geq \sigma}$,
$\gamma(\tau_i) \not \in S$ for all $i$.
Hence $\tau_1,\dots,\tau_m$ are edges in $\Lambda_{[d]\setminus S}$
and therefore
$\sigma'$ and $\sigma''$ are in the same connected component of $\Lambda_{[d]\setminus S}$.

We claim that $\Phi$ is an order-preserving bijection.
It is clear that $\Phi: P \to P_\Lambda$ is order preserving.
It remains to prove that $\Phi$ is a bijection.

Let $(H,S) \in P_\Lambda$.
Choose a facet $\tau$ which is a vertex of $H$.
Since $P$ is $d$-colored, there is $\sigma \in P$ with $\sigma \leq \tau$
and with $\{\psi(v): v \in V(\sigma)\}=[d]\setminus S$.
By the definition of $\Phi$,
we have $\Phi(\sigma)=(H,S)$.
Hence $\Phi$ is surjective.

Let $\sigma, \tau \in P$ such that $\Phi(\sigma)=\Phi(\tau)=(H,[d]\setminus S)$.
We prove $\sigma=\tau$.
Observe that, for any facet $\rho \in P_d$ and $T \subset [d]$,
there exists the unique element $\rho' \leq \rho$ in $P$ with $\{ \psi(v): v \in V(\rho')\}=T$.
This fact shows that, for any facet $\rho \geq \sigma$ and an edge $e \in P_{d-1}$
in $\Lambda_{[d]\setminus S}$ with $\rho \in \phi(e)$,
one has $e \geq \sigma$.
Hence the vertices of $H$ are the facets of $P_{\geq \sigma}$,
and therefore there is a facet $\rho \in P_{d-1}$ satisfying $\rho \geq \sigma$ and $\rho \geq \tau$.
Since $\Phi(\sigma)=\Phi(\tau)$,
$\{\psi(v): v \in V(\sigma)\}=\{\psi(v): v \in V(\tau)\}$.
Then $\sigma$ and $\tau$ have a common upper bound $\rho$
and has the same color. This fact implies $\sigma = \tau$.
Hence $\Phi$ is injective.
\end{proof}

Since homology manifolds are normal pseudomanifolds,
we have the following result.

\begin{corollary}
A $(d-1)$-dimensional simplicial cell homology manifold is graphical if and only if
it is $d$-colored.
\end{corollary}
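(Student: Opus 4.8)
The plan is to obtain the Corollary as an immediate consequence of the Proposition. Since the Proposition characterizes graphical posets among \emph{all} $(d-1)$-dimensional simplicial posets as precisely the $d$-colored normal pseudomanifolds, it suffices to verify that every simplicial cell homology manifold is automatically a normal pseudomanifold. Granting this, both directions follow at once: if a homology manifold $P$ is graphical then it is $d$-colored by the Proposition; and conversely, if $P$ is $d$-colored then, being already a normal pseudomanifold, it is a $d$-colored normal pseudomanifold and hence graphical by the Proposition.

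Thus the entire content lies in the implication ``homology manifold $\Rightarrow$ normal pseudomanifold.'' The key preliminary observation is that for every $\sigma \in P$ with $\rank\sigma \geq 1$ the link $P_{\geq\sigma}$ is itself a homology sphere: choosing a vertex $v \leq \sigma$ one has $P_{\geq\sigma}=(P_{\geq v})_{\geq\sigma}$, where $P_{\geq v}$ is a homology sphere by the definition of a homology manifold, and links of homology spheres are again homology spheres (this follows directly from the defining condition once one matches the rank of $\sigma$ in $P_{\geq v}$ with the corresponding sphere dimension). With this in hand, purity is part of the definition; condition (ii) holds because for $\sigma \in P_{d-1}$ the link $P_{\geq\sigma}$ is a $0$-dimensional homology sphere, so $\beta_0(P_{\geq\sigma})=1$, forcing exactly two facets above $\sigma$; and normality holds because for $1 \leq \rank\sigma \leq d-2$ the link is a positive-dimensional homology sphere, whence $\beta_0(P_{\geq\sigma})=0$ and $P_{\geq\sigma}$ is connected, while for $\sigma=\hat 0$ the link is $P$ itself, connected by our standing assumption.

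The main obstacle is the strong connectivity condition (iii), which I would establish by induction on $d$, the inductive statement being that every connected homology manifold is strongly connected (the small-dimensional base case being immediate, since there strong connectivity reduces to connectivity of the underlying graph). For the inductive step I would first show that any two facets sharing a common vertex $v$ are joined by a chain of ridges: they correspond to facets of the link $P_{\geq v}$, which is a connected homology manifold of dimension $d-2$ and hence strongly connected by induction, and a ridge-chain in $P_{\geq v}$ lifts to a ridge-chain through $v$ in $P$. I would then form the graph on $P_1$ in which two vertices are adjacent whenever they lie in a common facet; since $P$ is pure every rank-$2$ element lies below a facet, so this graph contains the $1$-skeleton of $P$ and is therefore connected. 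Combining these two facts — adjacent vertices determine the same strong-connectivity class of facets, and every facet contains a vertex — forces all facets into a single class, yielding strong connectivity and completing the verification that $P$ is a normal pseudomanifold.
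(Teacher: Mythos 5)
Your proposal follows exactly the paper's route: the paper deduces the corollary from the Proposition with the one-line remark that homology manifolds are normal pseudomanifolds. Your verification of that remark (links of rank $\geq 1$ elements are homology spheres, hence exactly two facets over each ridge and connected links, with strong connectivity proved by induction via vertex links and the facet--vertex incidence graph) is correct and simply fills in details the paper leaves as standard.
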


Finally, we explain what crystallizations are.
Given a $(d-1)$-dimensional manifold $M$,
an admissible $d$-colored multi-graph $\Lambda$ is called a \textit{crystallization of $M$}
if (the barycentric subdivision of) the simplicial cell complex $\Gamma(P_\Lambda)$
is homeomorphic to $M$
and, for every $i \in [d]$,
$\Lambda_{[d]\setminus \{i\}}$ is connected.
Since the latter condition is equivalent to $f_1(P_\Lambda)=d$,
which is also equivalent to $h_1''(P_\Lambda)=h_1(P_\Lambda)=0$,
and since any $(d-1)$-dimensional simplicial poset with $d$ vertices are $d$-colored,
considering crystallizations of $M$
is almost equivalent to considering simplicial cell decompositions of $M$ with $d$ vertices.

%\begin{remark}
%\label{4.4}
%By considering multi-graphs with loops,
%one can deal manifolds with boundary.
%See \cite{CG}.
%\end{remark}

%\begin{remark}
%\label{4.5}
%For simplicial complexes,
%coloring condition gives strong restrictions to its face vectors (see \cite{St}).
%However, for simplicial posets, coloring condition seem to give small restrictions to their face vectors.
%If a simplicial cell homology manifold is graphical then the number of its facets must be even.
%This seems to be the only restriction which coloring condition forces to face vectors of simplicial cell homology manifolds.
%Indeed, if follows from the Stanley's proof of the sufficiency of Theorem \ref{1.1},
%it is not hard to see that $h=(h_0,\dots,h_d)$ is the $h$-vector of a $(d-1)$-dimensional $d$-colored boolean sphere if and only if it is symmetric, $h_0=h_d=1$ and $h_0+\cdots+h_d$ is even.
%\end{remark}

\section{Construction of a graph that presents $S^n \times S^m$.}

%To simplify the argument,
%we identify a simplicial poset $P$
%and the simplicial cell complex $\Gamma(P)$.
Let $\Lambda_1$ and $\Lambda_2$ be admissible colored multi-graphs
such that $P_{\Lambda_1}$ and $P_{\Lambda_2}$ are simplicial cell decompositions
of manifolds $M_1$ and $M_2$.
Gagliardi and Grasseli \cite{GG} gave a way to construct an admissible
colored multi-graph $\Lambda$ that gives a simplicial cell decomposition of $M_1 \times M_2$
from $\Lambda_1$ and $\Lambda_2$.
Cristofori \cite{C} studied their construction for the products of spheres.
In this section, we recall this construction of a graph that presents $S^n \times S^m$.

We first recall a standard triangulation of the product of simplexes.
We just list the known facts and do not give a proof.
See \cite{GG} for the details.

Let $\sigma$ be an $n$-dimensional (geometric) simplex in $\mathbb{R}^n$
with vertices $v_0,v_1,\dots,v_n$
and $\tau$ an $m$-dimensional (geometric) simplex in $\mathbb{R}^m$ with vertices 
$u_0,u_1,\dots,u_m$.
The product of $\sigma$ and $\tau$ is the polytope
$$\sigma \times \tau =\{ (x, y) : x \in \sigma \mbox{ and } y \in \tau\} \subset \RR^{n+m}.$$
Then the set of the vertices of $\sigma \times \tau$
is
$$W=\big\{(v_i,u_j): 0 \leq i \leq n,\ 0 \leq j \leq m\big\}.$$
To simplify the notation, we write $w_{ij}=(v_i,u_j)$.

Let $\pd(\sigma \times \tau)$ be the abstract simplicial complex on the vertex set $W$
(that is, a family of subsets of $W$ closed under inclusion)
defined by
$$\pd(\sigma \times \tau)
=\big\{\{w_{i_0j_0},w_{i_1j_1},\dots,w_{i_pj_p}\}: i_0 \leq i_1 \leq \cdots \leq i_p,\ j_0 \leq j_1 \leq \cdots \leq j_p \big\}.$$
In particular, the facets of $\pd(\sigma \times \tau)$
are the sets of the form
$$F(i_0,i_1,\dots,i_{n+m};j_0,j_1,\dots,j_{n+m})
=\{w_{i_0j_0},w_{i_1j_1},\dots,w_{i_{n+m}j_{n+m}}\},
$$
where $(i_0,j_0)=(0,0)$ and where $(i_{l+1},j_{l+1})$ is either $(i_l+1,j_l)$
or $(i_l,j_l+1)$ for all $l$.
Then, by taking a convex hull of each face,
$\pd(\sigma \times \tau)$ gives a triangulation of $\sigma \times \tau$.
Also, $\pd (\sigma \times \tau)$ satisfies the following conditions.
\begin{itemize}
\item $\pd(\sigma \times \tau)$ is $(n+m+1)$-colored by the coloring map
\begin{eqnarray}
\label{5.0}
\begin{array}{cccc}
\psi :& W & \to & [n+m+1],\\
&w_{ij} &\to& \psi(w_{ij})=i+j+1.
\end{array}
\end{eqnarray}
\item
The boundary of $\pd(\sigma \times \tau)$ is generated by the following faces
\begin{eqnarray}
\label{5.1}
&&F(i_0,\dots,i_{n+m};j_0,\dots,j_{n+m})\! \setminus\!  \{w_{i_lj_l}\} \mbox{ such that $\{i_0,\dots,i_{n+m}\}\!  \setminus\!  \{i_l\}\!  \ne\!  \{0,\dots,n\}$,}\\
\label{5.2}
&&F(i_0,\dots,i_{n+m};j_0,\dots,j_{n+m})\!  \setminus\!  \{w_{i_lj_l}\} \mbox{ such that $\{j_0,\dots,j_{n+m}\}\!  \setminus\!  \{j_l\}\!  \ne\!  \{0,\dots,m\}$}.
\end{eqnarray}
Moreover, the convex hull of a face (\ref{5.1}) belongs to $\partial \sigma \times \tau$
and that of a face (\ref{5.2}) belongs to $\sigma \times \partial \tau$.
\end{itemize}

\begin{definition}
\label{5-0}
For any $n$-subset $S \subset [n+m]$,
we associate a facet $F(S)$ of $\pd(\sigma \times \tau)$
as follows:
We define $(0,0)=(i_0,j_0),(i_1,j_1),\dots,(i_{n+m},j_{n+m})=(n,m)$
by
\begin{eqnarray*}
(i_l,j_l) = \left\{
\begin{array}{ccc}
(i_{l-1},j_{l-1})+(1,0), & \mbox{ if } l \in S,\\
(i_{l-1},j_{l-1})+(0,1), & \mbox{ if } l \not \in S,\\
\end{array}
\right.
\end{eqnarray*}
for $\ell =1,2,\dots,n+m$
and let
$$F(S)=F(i_0,\dots,i_{n+m};j_0,\dots,j_{n+m}).$$
Then $\{ F(S): S \subset [n+m],\ \# S=n\}$
is the set of facets of $\pd(\sigma \times \tau)$.
\end{definition}

%It is sometimes convenient to identify
%$F(S)=F(i_0,\dots,i_{n+m};j_0,\dots,j_{n+m})$ with
%the lattice path
%$(i_0,j_0)\to (i_1,j_1)\to \dots \to(i_{n+m},j_{n+m})$.
%See Figure 1.
%\medskip

%\begin{center}
%\input{Fig1}
%\end{center}
%\medskip

%\noindent
%Note that $F(\{i_1,\dots,i_n\})$ corresponds to the lattice path
%whose $i_l$-step is horizontal move for $l=1,2,\dots,n$
%and other steps are vertical moves.

Now we consider the product of spheres.
Let $\sigma_1$ and $\sigma_2$ be $n$-dimensional simplexes
and $\tau_1$ and $\tau_2$ $m$-dimensional simplexes.
Let
\begin{eqnarray*}
A=\pd(\sigma_1 \times \tau_1),&&
B=\pd(\sigma_2 \times \tau_1),\\
C=\pd(\sigma_1 \times \tau_2), &&
D=\pd(\sigma_2 \times \tau_2).
\end{eqnarray*}
Then we obtain a simplicial cell decomposition of $S^n \times S^m$ by identifying
$(\partial \sigma_1)\times \tau_1$ in $A$ and
$(\partial \sigma_2)\times \tau_1$ in $B$,
$(\partial \sigma_1)\times \tau_2$ in $C$ and 
$(\partial \sigma_2)\times \tau_2$ in $D$,
$ \sigma_1 \times (\partial \tau_1)$ in $A$ and 
$\sigma_1 \times (\partial \tau_2)$ in $C$,
$\sigma_2 \times (\partial \tau_1)$ in $B$ and 
$\sigma_2 \times (\partial \tau_2)$ in $D$.
In particular,
by identifying the same types of faces described in (\ref{5.1}) and (\ref{5.2}),
we can construct such a simplicial cell decomposition
in a unique way so that it is $(n+m+1)$-colored by the coloring map (\ref{5.0}).

Let $P(n,m)$ be the simplicial cell decomposition of $S^n \times S^m$
obtained by the above construction.
Since $P(n,m)$ is $(n+m+1)$-colored, it is graphical.
Let $\Lambda(n,m)$ be the admissible $(n+m+1)$-colored graph with $P(n,m) \cong P_{\Lambda(n,m)}$.

For an $n$-subset $S \subset [n+m]$,
let $A(S)$ be the facet of $\pd(\sigma_1 \times \tau_1)$
defined in the same way as in Definition \ref{5-0}.
Also, we define $B(S)$, $C(S)$ and $D(S)$ similarly.
We may consider that these $A(S),B(S),C(S)$ and $D(S)$ are the vertices of $\Lambda(n,m)$.
By (\ref{5.1}) and (\ref{5.2}),
any edge of $\Lambda(n,m)$ is one of the following edges:
%
% (E1), (E2) and (E3)
%are the all edges of $\Lambda(n,m)$:
\begin{itemize}
\item[(E1)] an edge of color $k \in [n+m+1]$ whose vertices are $A(S)$ and $B(S)$
(or $C(S)$ and $D(S)$) such that $\{k-1,k\} \cap S = \emptyset$;
\item[(E2)] an edge of color $k \in [n+m+1]$ whose vertices are $A(S)$ and $C(S)$
(or $B(S)$ and $D(S)$) such that $\{k-1,k\} \subset S$;
\item[(E3)] an edge of color $2 \leq k \leq n+m$ whose vertices are $A(S)$ and $A((S \setminus \{k\}) \cup \{k-1\})$
such that $k \in S$ and $k-1 \not \in S$
(and the same type of edges for $B(-),C(-)$ and $D(-)$);
\end{itemize}
where we consider $\{k,k-1\} =\{1\}$ if $k=1$ and $\{k,k-1\}=\{n+m\}$ if $k=n+m+1$.

\begin{example}
The following Figure 3 is a part of the graph $\Lambda(2,2)$
(the whose graph can be found in \cite[p.\ 567]{GG}).
\begin{center}
%WinTpicVersion3.08
\unitlength 0.1in
\begin{picture}( 55.3800, 23.9900)(  3.4300,-29.1400)
% STR 2 0 3 0
% 3 1551 1660 1551 1757 5 0
% $A(\{2,3\})$
\put(15.5100,-17.5700){\makebox(0,0){$A(\{2,3\})$}}%
% STR 2 0 3 0
% 3 2519 1075 2519 1172 5 0
% $A(\{1,2\})$
\put(25.1900,-11.7200){\makebox(0,0){$A(\{1,2\})$}}%
% STR 2 0 3 0
% 3 2519 1656 2519 1752 5 0
% $A(\{1,3\})$
\put(25.1900,-17.5200){\makebox(0,0){$A(\{1,3\})$}}%
% STR 2 0 3 0
% 3 2519 2236 2519 2333 5 0
% $A(\{1,4\})$
\put(25.1900,-23.3300){\makebox(0,0){$A(\{1,4\})$}}%
% STR 2 0 3 0
% 3 1551 2241 1551 2338 5 0
% $A(\{2,4\})$
\put(15.5100,-23.3800){\makebox(0,0){$A(\{2,4\})$}}%
% STR 2 0 3 0
% 3 583 2246 583 2343 5 0
% $A(\{3,4\})$
\put(5.8300,-23.4300){\makebox(0,0){$A(\{3,4\})$}}%
% LINE 2 0 3 0
% 2 2519 1268 2519 1656
% 
\special{pn 8}%
\special{pa 2520 1268}%
\special{pa 2520 1656}%
\special{fp}%
% LINE 2 0 3 0
% 2 2519 1849 2519 2236
% 
\special{pn 8}%
\special{pa 2520 1850}%
\special{pa 2520 2236}%
\special{fp}%
% LINE 2 0 3 0
% 2 1551 1854 1551 2241
% 
\special{pn 8}%
\special{pa 1552 1854}%
\special{pa 1552 2242}%
\special{fp}%
% STR 2 0 3 0
% 3 3680 1080 3680 1176 5 0
% $B(\{1,2\})$
\put(36.8000,-11.7600){\makebox(0,0){$B(\{1,2\})$}}%
% STR 2 0 3 0
% 3 3680 1660 3680 1757 5 0
% $B(\{1,3\})$
\put(36.8000,-17.5700){\makebox(0,0){$B(\{1,3\})$}}%
% STR 2 0 3 0
% 3 3680 2241 3680 2338 5 0
% $B(\{1,4\})$
\put(36.8000,-23.3800){\makebox(0,0){$B(\{1,4\})$}}%
% LINE 2 0 3 0
% 2 3680 1273 3680 1660
% 
\special{pn 8}%
\special{pa 3680 1274}%
\special{pa 3680 1660}%
\special{fp}%
% LINE 2 0 3 0
% 2 3680 1854 3680 2241
% 
\special{pn 8}%
\special{pa 3680 1854}%
\special{pa 3680 2242}%
\special{fp}%
% STR 2 0 3 0
% 3 4629 1660 4629 1757 5 0
% $B(\{2,3\})$
\put(46.2900,-17.5700){\makebox(0,0){$B(\{2,3\})$}}%
% STR 2 0 3 0
% 3 4629 2241 4629 2338 5 0
% $B(\{2,4\})$
\put(46.2900,-23.3800){\makebox(0,0){$B(\{2,4\})$}}%
% LINE 2 0 3 0
% 2 4629 1854 4629 2241
% 
\special{pn 8}%
\special{pa 4630 1854}%
\special{pa 4630 2242}%
\special{fp}%
% STR 2 0 3 0
% 3 5577 2246 5577 2343 5 0
% $B(\{3,4\})$
\put(55.7700,-23.4300){\makebox(0,0){$B(\{3,4\})$}}%
% LINE 2 0 3 0
% 2 2180 1752 1889 1752
% 
\special{pn 8}%
\special{pa 2180 1752}%
\special{pa 1890 1752}%
\special{fp}%
% LINE 2 0 3 0
% 2 2180 2333 1889 2333
% 
\special{pn 8}%
\special{pa 2180 2334}%
\special{pa 1890 2334}%
\special{fp}%
% LINE 2 0 3 0
% 2 1212 2333 921 2333
% 
\special{pn 8}%
\special{pa 1212 2334}%
\special{pa 922 2334}%
\special{fp}%
% LINE 2 0 3 0
% 2 4309 2333 4019 2333
% 
\special{pn 8}%
\special{pa 4310 2334}%
\special{pa 4020 2334}%
\special{fp}%
% LINE 2 0 3 0
% 2 4309 1752 4019 1752
% 
\special{pn 8}%
\special{pa 4310 1752}%
\special{pa 4020 1752}%
\special{fp}%
% LINE 2 0 3 0
% 2 5260 2330 4970 2330
% 
\special{pn 8}%
\special{pa 5260 2330}%
\special{pa 4970 2330}%
\special{fp}%
% STR 2 0 3 0
% 3 3200 503 3200 600 5 0
% Figure 3
\put(32.0000,-6.0000){\makebox(0,0){Figure 3}}%
% STR 2 0 3 0
% 3 3099 978 3099 1075 5 0
% 45
\put(30.9900,-10.7500){\makebox(0,0){45}}%
% STR 2 0 3 0
% 3 3099 1559 3099 1656 5 0
% 5
\put(30.9900,-16.5600){\makebox(0,0){5}}%
% STR 2 0 3 0
% 3 3099 2140 3099 2236 5 0
% 3
\put(30.9900,-22.3600){\makebox(0,0){3}}%
% STR 2 0 3 0
% 3 4164 2140 4164 2236 5 0
% 2
\put(41.6400,-22.3600){\makebox(0,0){2}}%
% STR 2 0 3 0
% 3 4164 1559 4164 1656 5 0
% 2
\put(41.6400,-16.5600){\makebox(0,0){2}}%
% STR 2 0 3 0
% 3 5132 2140 5132 2236 5 0
% 3
\put(51.3200,-22.3600){\makebox(0,0){3}}%
% STR 2 0 3 0
% 3 2035 2140 2035 2236 5 0
% 2
\put(20.3500,-22.3600){\makebox(0,0){2}}%
% STR 2 0 3 0
% 3 2035 1559 2035 1656 5 0
% 2
\put(20.3500,-16.5600){\makebox(0,0){2}}%
% STR 2 0 3 0
% 3 1067 2140 1067 2236 5 0
% 3
\put(10.6700,-22.3600){\makebox(0,0){3}}%
% STR 2 0 3 0
% 3 3758 1375 3758 1472 5 0
% 3
\put(37.5800,-14.7200){\makebox(0,0){3}}%
% STR 2 0 3 0
% 3 3758 1956 3758 2052 5 0
% 4
\put(37.5800,-20.5200){\makebox(0,0){4}}%
% STR 2 0 3 0
% 3 4726 1956 4726 2052 5 0
% 4
\put(47.2600,-20.5200){\makebox(0,0){4}}%
% STR 2 0 3 0
% 3 2402 1956 2402 2052 5 0
% 4
\put(24.0200,-20.5200){\makebox(0,0){4}}%
% STR 2 0 3 0
% 3 1434 1956 1434 2052 5 0
% 4
\put(14.3400,-20.5200){\makebox(0,0){4}}%
% STR 2 0 3 0
% 3 2402 1375 2402 1472 5 0
% 3
\put(24.0200,-14.7200){\makebox(0,0){3}}%
% STR 2 0 3 0
% 3 3110 743 3110 840 5 0
% 15
\put(31.1000,-8.4000){\makebox(0,0){15}}%
% STR 2 0 3 0
% 3 3099 2517 3099 2614 5 0
% 1
\put(30.9900,-26.1400){\makebox(0,0){1}}%
% STR 2 0 3 0
% 3 3100 2743 3100 2840 5 0
% 12
\put(31.0000,-28.4000){\makebox(0,0){12}}%
% LINE 2 0 3 0
% 6 2906 2333 3293 2333 2906 1752 3293 1752 2906 1172 3293 1172
% 
\special{pn 8}%
\special{pa 2906 2334}%
\special{pa 3294 2334}%
\special{fp}%
\special{pa 2906 1752}%
\special{pa 3294 1752}%
\special{fp}%
\special{pa 2906 1172}%
\special{pa 3294 1172}%
\special{fp}%
% ELLIPSE 2 0 3 0
% 4 3099 2333 1551 2720 1551 2430 4648 2430
% 
\special{pn 8}%
\special{ar 3100 2334 1548 388  0.2454343 2.8960061}%
% ELLIPSE 2 0 3 0
% 4 3099 2333 583 2914 583 2430 5616 2430
% 
\special{pn 8}%
\special{ar 3100 2334 2516 582  0.1653420 2.9761862}%
% ELLIPSE 2 0 3 0
% 4 3110 1750 4630 930 4630 1640 1560 1640
% 
\special{pn 8}%
\special{ar 3110 1750 1520 820  3.2724534 6.1497720}%
\end{picture}%
\end{center}
\end{example}
\medskip

\noindent
The numbers on edges are colors of edges.
For example, there are two edges between $A(\{1,2\})$ and $B(\{1,2\})$
such that one edge has color 4 and the other edge has color 5.
In Figure 3, we omit edges between $A$ and $C$ (and $B$ and $D$),
but they are edges between $A(S)$ and $C(S)$ whose colors are the colors which do not appear in $A(S)$.
For example, there are two edges between $A(\{1,2\})$ and $C(\{1,2\})$
whose color is $1$ or $2$.
%such that one edge has color $1$ and the other edge has color $2$.

\section{Proof of Theorem \ref{1.2}}

In this section, we prove the sufficiency of Theorem \ref{1.2}.

\subsection{Cancellations of dipoles}
\

Let $\Lambda=(G,\gamma)$ with $G=(V,E,\phi)$ be an admissible $d$-colored multi-graph.
Let $x,y \in V$ be vertices of $G$.
We define a new admissible $d$-colored multi-graph $\Lambda'=\del_{\{x,y\}} \Lambda =(G',\gamma')$ with $G'=(V',E',\phi')$
as follows:
Let $C=\{\gamma(e): e \in E,\ \phi(e)=\{x,y\}\}$.
Thus $C$ is the set of colors of edges between $x$ and $y$.
Then, for each $i \in [d] \setminus C$,
there is the unique pair $(a_i,b_i)$ of vertices in $G$ such that
there are edges $e$ and $e'$ in $E$ of color $i$
with $\phi(e)=\{a_i,x\}$ and with $\phi(e')=\{y,b_i\}$.
Then we define the graph $G'=(V',E',\phi')$ by
\begin{eqnarray*}
V' &=& V \setminus \{x,y\}\\
E' &=& \{ e \in E: \phi(e)\cap \{x,y\} = \emptyset\} \cup \{f_i: i \in [d] \setminus C\},\\
\phi'(e)&=&\left\{
\begin{array}{ll}
\phi(e),& \mbox{ if $e \in E$,}\\
\{a_i,b_i\},& \mbox{ if $e = f_i$ for some $i \in [d]\setminus C$.}
\end{array}
\right.
\end{eqnarray*}
Also, we define the coloring $\gamma'$ of $G'$ by
\begin{eqnarray*}
\gamma'(e)=\left\{
\begin{array}{ll}
\gamma(e),& \mbox{ if $e \in E$,}\\
i, & \mbox{ if $e = f_i$ for some $i \in [d] \setminus C$.}
\end{array}
\right.
\end{eqnarray*}

Thus $\Lambda'$ is the graph obtained from $\Lambda$ by removing the vertices $x$ and $y$
and by adding, for each color $i \in [d] \setminus C$,
a new edge $f_i$ of color $i$ between the vertices $a_i$ and $b_i$
(see Figure 4).
By the construction, it is easy to see that $\Lambda'$ is an admissible $d$-colored multi-graph.
We call the operation $\Lambda \to \Lambda'$ a \textit{cancelling (of $x$ and $y$)}.

\begin{center}
%WinTpicVersion3.08
\unitlength 0.1in
\begin{picture}( 53.1500, 17.6500)(  7.7500,-18.9000)
% LINE 2 0 3 0
% 10 1000 600 1600 1200 1600 1200 1000 1800 1600 1200 2400 1200 2400 1200 3000 600 2400 1200 3000 1800
% 
\special{pn 8}%
\special{pa 1000 600}%
\special{pa 1600 1200}%
\special{fp}%
\special{pa 1600 1200}%
\special{pa 1000 1800}%
\special{fp}%
\special{pa 1600 1200}%
\special{pa 2400 1200}%
\special{fp}%
\special{pa 2400 1200}%
\special{pa 3000 600}%
\special{fp}%
\special{pa 2400 1200}%
\special{pa 3000 1800}%
\special{fp}%
% CIRCLE 2 0 2 0
% 4 3000 600 3000 690 3000 690 3000 690
% 
\special{pn 8}%
\special{sh 0}%
\special{ar 3000 600 90 90  0.0000000 6.2831853}%
% CIRCLE 2 0 2 0
% 4 3000 1800 3000 1890 3000 1890 3000 1890
% 
\special{pn 8}%
\special{sh 0}%
\special{ar 3000 1800 90 90  0.0000000 6.2831853}%
% CIRCLE 2 0 2 0
% 4 2400 1200 2400 1290 2400 1290 2400 1290
% 
\special{pn 8}%
\special{sh 0}%
\special{ar 2400 1200 90 90  0.0000000 6.2831853}%
% CIRCLE 2 0 2 0
% 4 1600 1200 1600 1290 1600 1290 1600 1290
% 
\special{pn 8}%
\special{sh 0}%
\special{ar 1600 1200 90 90  0.0000000 6.2831853}%
% CIRCLE 2 0 2 0
% 4 1000 600 1000 690 1000 690 1000 690
% 
\special{pn 8}%
\special{sh 0}%
\special{ar 1000 600 90 90  0.0000000 6.2831853}%
% CIRCLE 2 0 2 0
% 4 1000 1800 1000 1890 1000 1890 1000 1890
% 
\special{pn 8}%
\special{sh 0}%
\special{ar 1000 1800 90 90  0.0000000 6.2831853}%
% STR 2 0 3 0
% 3 1600 1100 1600 1200 5 0
% $x$
\put(16.0000,-12.0000){\makebox(0,0){$x$}}%
% STR 2 0 3 0
% 3 2400 1100 2400 1200 5 0
% $y$
\put(24.0000,-12.0000){\makebox(0,0){$y$}}%
% ELLIPSE 2 0 3 0
% 4 5000 600 6000 1000 4000 600 6000 600
% 
\special{pn 8}%
\special{ar 5000 600 1000 400  6.2831853 6.2831853}%
\special{ar 5000 600 1000 400  0.0000000 3.1415927}%
% ELLIPSE 2 0 3 0
% 4 5000 1800 6000 1400 6000 1800 4000 1800
% 
\special{pn 8}%
\special{ar 5000 1800 1000 400  3.1415927 6.2831853}%
% CIRCLE 2 0 2 0
% 4 4000 600 4000 690 4000 690 4000 690
% 
\special{pn 8}%
\special{sh 0}%
\special{ar 4000 600 90 90  0.0000000 6.2831853}%
% CIRCLE 2 0 2 0
% 4 4000 1800 4000 1890 4000 1890 4000 1890
% 
\special{pn 8}%
\special{sh 0}%
\special{ar 4000 1800 90 90  0.0000000 6.2831853}%
% CIRCLE 2 0 2 0
% 4 6000 1800 6000 1890 6000 1890 6000 1890
% 
\special{pn 8}%
\special{sh 0}%
\special{ar 6000 1800 90 90  0.0000000 6.2831853}%
% CIRCLE 2 0 2 0
% 4 6000 600 6000 690 6000 690 6000 690
% 
\special{pn 8}%
\special{sh 0}%
\special{ar 6000 600 90 90  0.0000000 6.2831853}%
% STR 2 0 3 0
% 3 2000 1000 2000 1100 5 0
% 3
\put(20.0000,-11.0000){\makebox(0,0){3}}%
% STR 2 0 3 0
% 3 1340 650 1340 750 5 0
% 1
\put(13.4000,-7.5000){\makebox(0,0){1}}%
% STR 2 0 3 0
% 3 1340 1540 1340 1640 5 0
% 2
\put(13.4000,-16.4000){\makebox(0,0){2}}%
% STR 2 0 3 0
% 3 2700 1540 2700 1640 5 0
% 2
\put(27.0000,-16.4000){\makebox(0,0){2}}%
% STR 2 0 3 0
% 3 2700 640 2700 740 5 0
% 1
\put(27.0000,-7.4000){\makebox(0,0){1}}%
% STR 2 0 3 0
% 3 5000 770 5000 870 5 0
% 1
\put(50.0000,-8.7000){\makebox(0,0){1}}%
% STR 2 0 3 0
% 3 5000 1510 5000 1610 5 0
% 2
\put(50.0000,-16.1000){\makebox(0,0){2}}%
% STR 2 0 3 0
% 3 3500 110 3500 210 5 0
% Figure 4
\put(35.0000,-2.1000){\makebox(0,0){Figure 4}}%
% STR 2 0 3 0
% 3 3500 1100 3500 1200 5 0
% $\Rightarrow$
\put(35.0000,-12.0000){\makebox(0,0){$\Rightarrow$}}%
% STR 2 0 3 0
% 3 1000 500 1000 600 5 0
% $a_1$
\put(10.0000,-6.0000){\makebox(0,0){$a_1$}}%
% STR 2 0 3 0
% 3 1000 1700 1000 1800 5 0
% $a_2$
\put(10.0000,-18.0000){\makebox(0,0){$a_2$}}%
% STR 2 0 3 0
% 3 3000 1700 3000 1800 5 0
% $b_2$
\put(30.0000,-18.0000){\makebox(0,0){$b_2$}}%
% STR 2 0 3 0
% 3 3000 500 3000 600 5 0
% $b_1$
\put(30.0000,-6.0000){\makebox(0,0){$b_1$}}%
% STR 2 0 3 0
% 3 4000 500 4000 600 5 0
% $a_1$
\put(40.0000,-6.0000){\makebox(0,0){$a_1$}}%
% STR 2 0 3 0
% 3 4000 1700 4000 1800 5 0
% $a_2$
\put(40.0000,-18.0000){\makebox(0,0){$a_2$}}%
% STR 2 0 3 0
% 3 6000 1700 6000 1800 5 0
% $b_2$
\put(60.0000,-18.0000){\makebox(0,0){$b_2$}}%
% STR 2 0 3 0
% 3 6000 500 6000 600 5 0
% $b_1$
\put(60.0000,-6.0000){\makebox(0,0){$b_1$}}%
\end{picture}%
\end{center}
\medskip

We say that two vertices $u$ and $v$ of a multi-graph $G$ are \textit{connected
on $G$} if there exists a sequence of edges, called a \textit{path}, $e_1,e_2,\dots,e_l$ of $G$
such that $u \in \phi(e_1)$, $v \in \phi(e_l)$ and $\phi(e_i)\cap \phi(e_{i+1}) \ne \emptyset$
for $i=1,2,\dots,l-1$.
If two vertices $u$ and $v$ are not connected on $G$,
then we say that they are \textit{disconnected on $G$}.

Let $C=\{\gamma(e): e \in E,\ \phi(e)=\{x,y\}\}$.
The cancelling $\Lambda \to \Lambda'=\del _{\{x,y\}} \Lambda$ is said to be a \textit{cancelling of a dipole (of type $C$)}
if $C \ne \emptyset$ and the vertices $x$ and $y$ are disconnected on $\Lambda_{[d] \setminus C}$.
The following result is known in crystallization theory (\cite[Lemma 1]{FG}).

\begin{lemma}[Ferri-Gagliardi]
\label{6.1}
Let $\Lambda$ be an admissible $d$-colored multi-graph such that
(the barycentric subdivision of)
the simplicial cell complex $\Gamma(P_\Lambda)$ is a PL-manifold.
If $\Lambda \to \Lambda'$ is a cancellation of a dipole
then
$\Gamma(P_{\Lambda'})$ is homeomorphic to $\Gamma(P_\Lambda)$.
\end{lemma}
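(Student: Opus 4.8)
The plan is to give a purely local, geometric reconstruction of the cancellation and then reduce the homeomorphism claim to one topological lemma about folding a spherical boundary. Throughout, recall from Section~4 that the vertices of $\Lambda$ are the facets of $P_\Lambda$, and that an edge of color $i$ joining two facets records the ridge (codimension-one face) missing color $i$ that they share; recall also that a face with colors $[d]\setminus S$ corresponds to a connected component of $G_S$. Let $x,y$ be the two facets carrying the dipole, let $C\subset[d]$ be its set of colors, put $\overline C=[d]\setminus C$ and $h=\#C$, and write $\sigma_x,\sigma_y$ for the associated $(d-1)$-simplices. First I would extract the combinatorics of the two dipole conditions. Because every color class of $\Lambda$ is a perfect matching, the hypothesis that the edges joining $x$ and $y$ are colored exactly by $C$ means that $\{x,y\}$ is an \emph{entire} connected component of $G_C$; translated through the description of $P_\Lambda$, this says the color-$\overline C$ face $\rho_{\overline C}$ common to $\sigma_x$ and $\sigma_y$ lies in no other facet, and that for each $c\in C$ the two facets share the codimension-one face of their color-$C$ faces missing $c$ (the single color-$c$ edge connects $x$ and $y$ inside $G_{\overline C\cup\{c\}}$). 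Dually, the hypothesis that $x$ and $y$ are disconnected in $G_{[d]\setminus C}=G_{\overline C}$ says exactly that the color-$C$ faces $\rho_C^x$ and $\rho_C^y$ are \emph{distinct}.

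Combining these, $\rho_C^x$ and $\rho_C^y$ are two $(h-1)$-simplices with identical boundary, so $\Sigma:=\rho_C^x\cup\rho_C^y$ is a simplicial cell $(h-1)$-sphere, while $\rho_{\overline C}$ is a common $(d-h-1)$-simplex. Since each facet is the join of its color-$C$ vertices with its color-$\overline C$ vertices, I get $\sigma_x=\rho_C^x*\rho_{\overline C}$ and $\sigma_y=\rho_C^y*\rho_{\overline C}$, so the region
$$N:=|\sigma_x|\cup|\sigma_y|=\Sigma*\rho_{\overline C}\cong S^{h-1}*B^{d-h-1}\cong B^{d-1}$$
is a PL $(d-1)$-ball (here $\overline C\neq\emptyset$, since $\Lambda'$ is required to be a genuine admissible graph, so $V\neq\{x,y\}$). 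Its boundary $\partial N=\Sigma*\partial\rho_{\overline C}\cong S^{d-2}$ splits along the equator $E=\partial\rho_C^x*\partial\rho_{\overline C}$ into the two $(d-2)$-balls $N_1=\rho_C^x*\partial\rho_{\overline C}$ and $N_2=\rho_C^y*\partial\rho_{\overline C}$.

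Next I would compare the two complexes directly. Let $W$ be the closure of $\Gamma(P_\Lambda)$ with the interior of $N$ removed; since $\Gamma(P_\Lambda)$ is a closed PL manifold and $N$ is an embedded ball, $W$ is a compact PL manifold with $\partial W=\partial N\cong S^{d-2}$, and $|\Gamma(P_\Lambda)|=W\cup_{\partial W}N$ is precisely $W$ capped off by the ball $N$. On the other hand, the cancellation deletes $\sigma_x,\sigma_y$ and, for each $i\in\overline C$, adds an edge $f_i$ of color $i$; this edge identifies the ridge $\rho_C^x*(\partial_i\rho_{\overline C})$ of the facet $a_i$ with the ridge $\rho_C^y*(\partial_i\rho_{\overline C})$ of the facet $b_i$ by the color-preserving map, where $\partial_i\rho_{\overline C}$ is the facet of $\rho_{\overline C}$ missing color $i$. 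Taking the union over $i\in\overline C$, these identifications assemble into the folding homeomorphism $\phi\colon N_1\to N_2$ that carries $\rho_C^x$ to $\rho_C^y$ by colors and is the identity on $\partial\rho_{\overline C}$; since $\partial\rho_C^x=\partial\rho_C^y$, this $\phi$ is the identity on the equator $E$. Hence $|\Gamma(P_{\Lambda'})|=W/(z\sim\phi(z))$, the manifold $W$ with the two hemispheres of its boundary sphere folded together. The entire problem thus reduces to the lemma: for a compact PL manifold $W$ with $\partial W\cong S^{d-2}$ split as $N_1\cup_E N_2$ and a fold $\phi\colon N_1\to N_2$ fixing $E$, the capped manifold $W\cup_{\partial W}B^{d-1}$ is PL-homeomorphic to the folded manifold $W/(z\sim\phi(z))$.

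I would prove this lemma by exhibiting both manifolds as the adjunction space $W\cup_\psi N_1$, where $\psi\colon\partial W\to N_1$ is the fold (the identity on $N_1$ and $\phi^{-1}$ on $N_2$). The folded manifold is this space by definition, as collapsing $\partial W$ by $\psi$ merges each point of $N_1$ with its $\phi$-partner in $N_2$. For the capped manifold I would use a collar $\partial W\times[0,1]\hookrightarrow W$ together with the fact that $B^{d-1}$ can be realized as the mapping cylinder of $\psi$; absorbing the collar-shaped part of that cylinder back into $W$ leaves exactly $W\cup_\psi N_1$. The main obstacle is this topological lemma, in particular the PL fact that the mapping cylinder of a fold $S^{d-2}\to B^{d-2}$ is a $(d-1)$-ball, and the collar bookkeeping that upgrades the set-theoretic identity to a PL-homeomorphism. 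The hypothesis that $\Gamma(P_\Lambda)$ is a PL manifold enters precisely here: it guarantees that $W$ is a manifold with collared boundary, so that the cutting, folding, and capping are all PL and the regular-neighborhood arguments go through.
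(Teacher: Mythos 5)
The paper offers no proof of this lemma at all --- it is quoted from Ferri--Gagliardi \cite{FG} --- so your attempt stands on its own. Most of it is correct and carefully done: the translation of the two dipole conditions ($\{x,y\}$ is a whole component of $G_C$; $\rho_C^x\neq\rho_C^y$ but $\partial\rho_C^x=\partial\rho_C^y$), the computation $\sigma_x\cap\sigma_y=\partial\rho_C^x * \rho_{\overline C}$ giving $N=\Sigma * \rho_{\overline C}\cong B^{d-1}$, and the final ``capping equals folding'' lemma via collars and the PL mapping cylinder of the fold $S^{d-2}\to B^{d-2}$ are all sound; the last step is a standard PL fact and your proof sketch of it is the usual one.

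The genuine gap is the single sentence ``Hence $|\Gamma(P_{\Lambda'})|=W/(z\sim\phi(z))$.'' The complex $\Gamma(P_{\Lambda'})$ is defined intrinsically: for \emph{every} $S\subseteq[d]$ its faces with color set $[d]\setminus S$ are the components of $G'_S$, not just the ridges touched by the new edges $f_i$. To identify it with the fold quotient of $W$ you must check, for each $S$, that the components of $G'_S$ are exactly the fold-equivalence classes of components of $G_S$: (i) for $\emptyset\neq S\subseteq\overline C$, the two partner faces $(H_1,S)\ni x$ and $(H_2,S)\ni y$ merge into precisely \emph{one} face, i.e.\ $(H_1\setminus x)\cup(H_2\setminus y)$ together with the edges $f_i$, $i\in S$, is a single component (this needs $H_1\setminus x$ connected; it holds by an elementary parity argument with the color matchings, but it is not nothing); and, more seriously, (ii) for $S$ meeting both $C$ and $\overline C$, the equator face $(K,S)$ with $x,y\in K$ must not \emph{split}: $K\setminus\{x,y\}$ must stay in one component of $G'_S$. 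Point (ii) can genuinely fail for admissible graphs in general: the matching parities allow $K\setminus\{x,y\}$ to fall into several pieces, each containing a pair $\{a_i,b_i\}$, which the edges $f_i=a_ib_i$ do \emph{not} reconnect; in that event $P_{\Lambda'}$ has strictly more faces than your quotient cell structure and the two spaces need not agree. What rules this out is precisely the manifold hypothesis: the link of $(K,S)$ in $P_\Lambda$ is a PL sphere whose dual graph is $K$, and a PL sphere minus one or two adjacent open facets is a connected PL manifold with boundary, hence strongly connected, so deleting $x$, or the adjacent pair $x,y$, cannot disconnect the dual graph. Thus your closing remark that the PL hypothesis enters ``precisely'' at the collar/mapping-cylinder step is wrong: it is already needed to justify the cell-level identification $\Gamma(P_{\Lambda'})=W/\phi$. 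Once (i) and (ii) are proved, your argument is a complete proof along the standard crystallization-theory lines.
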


\subsection{Construction}\

Recall that by Corollary \ref{3.2},
to prove Theorem \ref{1.2},
it is enough to construct a simplicial cell decomposition $P$
of $S^n \times S^m$
with $h''(P)=(1,0,\dots,0,1)$.

\begin{lemma}
\label{6.2}
Let $\Lambda$ be an admissible $(n+m+1)$-colored multi-graph such that $P_\Lambda$
is a simplicial cell decomposition of $S^n \times S^m$.
If the number of vertices of $\Lambda$ is equal to $2+ 2{n+m \choose n}$
then $h''(P_\Lambda)=(1,0,\dots,0,1)$.
\end{lemma}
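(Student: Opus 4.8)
The plan is to pin down the whole $h''$-vector by combining the Novik--Swartz nonnegativity with an exact evaluation of $\sum_{i=0}^d h_i''(P_\Lambda)$. First I would record that, under the correspondence of Section 4, the vertex set of $\Lambda$ is exactly $P_d$, the set of facets of $P_\Lambda$; hence the hypothesis reads $f_d(P_\Lambda)=2+2\binom{n+m}{n}$. Since $S^n\times S^m$ is a closed orientable PL-manifold, $P_\Lambda$ is an orientable simplicial cell homology manifold, so Theorem \ref{2.1}(i) gives $h_i''(P_\Lambda)\ge 0$ for all $i$, while by definition $h_0''=1$ and $h_d''=\beta_{d-1}(P_\Lambda)=\beta_{n+m}(S^n\times S^m)=1$. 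Because these two endpoints already account for a total mass of $2$ and every entry is nonnegative, it suffices to prove $\sum_{i=0}^d h_i''(P_\Lambda)=2$; the intermediate entries are then forced to vanish, giving $h''(P_\Lambda)=(1,0,\dots,0,1)$.

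Next I would evaluate the sum. Summing the defining formula of the $h_i''$ and collecting the coefficient of each $\beta_j$ exactly as in the proof of Corollary \ref{2.3} (using $\sum_{k=0}^{j}(-1)^k\binom{d}{k}=(-1)^j\binom{d-1}{j}$) yields the closed form
$$\sum_{i=0}^d h_i''(P_\Lambda)=f_d(P_\Lambda)-\sum_{j=1}^{d-2}\beta_j(P_\Lambda)\binom{d-1}{j}.$$
Here $\sum_{i=0}^d h_i=f_d$ because setting $t=1$ in the defining relation of the $h$-vector kills every term except $i=d$. The torus computation at the end of Section 3, where $f_3=6$, $\beta_1=2$ and $\sum h_i''=2$, is a convenient numerical check of the formula and of its sign.

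Finally I would insert the Betti numbers of $S^n\times S^m$. By the K\"unneth formula the only nonzero reduced Betti numbers are $\beta_n,\beta_m$ and $\beta_{n+m}=1$, with $\beta_n=\beta_m=1$ when $n\neq m$ and $\beta_n=2$ when $n=m$; in either case $n,m\le d-2=n+m-1$, so $\sum_{j=1}^{d-2}\beta_j\binom{d-1}{j}=\binom{n+m}{n}+\binom{n+m}{m}=2\binom{n+m}{n}$. Substituting this together with $f_d=2+2\binom{n+m}{n}$ into the displayed identity gives $\sum_{i=0}^d h_i''(P_\Lambda)=2$, which completes the argument. The one delicate point is the exact constant in the summation identity: the coefficient of $\beta_j$ must come out to $-\binom{d-1}{j}$, so that the contributions of $f_d$ and of the middle Betti numbers cancel precisely. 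This is the same binomial bookkeeping already carried out (modulo $2$) in Corollary \ref{2.3}, and the case $n=m$ needs only the remark that the doubled middle Betti number is compensated by there being a single middle dimension, so the constant $2\binom{n+m}{n}$ is uniform.
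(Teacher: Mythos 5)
Your proof is correct and takes essentially the same route as the paper's: identify the vertices of $\Lambda$ with the facets of $P_\Lambda$, evaluate $\sum_{i=0}^{d}h_i''(P_\Lambda)=f_d(P_\Lambda)-\sum_{j=1}^{d-2}\beta_j(P_\Lambda)\binom{d-1}{j}=2$ via the bookkeeping from the proof of Corollary \ref{2.3} and the K\"unneth Betti numbers, and conclude from Novik--Swartz nonnegativity together with $h_0''=h_d''=1$. Your sign analysis is also right: the coefficient of $\beta_j$ is indeed $-\binom{d-1}{j}$ (the displayed exponent $(-1)^{l-i-1}$ in Corollary \ref{2.3} is a harmless typo there, since that corollary only uses the identity mod $2$, while the formula as used in the paper's proof of this lemma has the correct sign).
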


\begin{proof}
Observe $\beta_i(P_\Lambda)=0$ if $i \ne n,m,n+m$,
$\beta_n(P_\Lambda)=\beta_m(P_\Lambda)=1$ if $n \ne m$ and $\beta_n(P_\Lambda)=2$ if $n=m$.
In the proof of Corollary \ref{2.3},
we show
$$f_{n+m+1}(P_\Lambda)=\sum_{i=0}^{n+m+1} h_i(P_\Lambda)
=\sum_{i=0}^{n+m+1} h_i''(P_\Lambda) + {n+m \choose n} + {n+m \choose m}.$$
Since the number of vertices of $\Lambda$ is equal to $f_{n+m+1}(P_\Lambda)$,
$\sum_{i=0}^{n+m+1} h_i''(P_\Lambda) =2$.
Then the statement follows since $h''(P_\Lambda)$ is non-negative.
\end{proof}

By the above lemma,
to prove Theorem \ref{1.2},
what we must prove is
the existence of a crystallization of $S^n \times S^m$
with $2+2{n+m \choose n}$ vertices.
Unfortunately,
the graph $\Lambda(n,m)$ given in the previous section has $4 {n+m \choose n}$ vertices.
We make a desired crystallization
by repeating cancellations of dipoles to $\Lambda(n,m)$.

From now on we fix positive integers $n$ and $m$.
For integers $i,j$, we write $[i,j]=\{i,i+1,\dots,j\}$ where $[i,j]=\emptyset$ if $j<i$.
For $j=1,2,\dots,n$, let
$$X_j=\{ S \subset [j+1,n+m]: \# S  = n+1-j\}$$
and
$$X= \bigcup_{j=1}^n X_j.$$

\begin{remark}
\label{6.3}
There is a natural bijection $\Phi:X \to \{S \subset [n+m]:  \# S= n\} \setminus \{[n]\}$
defined by $\Phi(S)=[n-\# S] \cup S$.
In particular $\# X={n+m \choose n} -1$.
\end{remark}

In the rest of this section,
for a set $\{i_1,\dots,i_k\}$ of integers,
we always assume $i_1 < \cdots <i_k$.

\begin{definition}
\label{6.4}
Recall that the vertices of $\Lambda(n,m)$ are denoted by $A(S),B(S), C(S)$ and $D(S)$
where $S \subset [n+m]$ and $ \# S=n$.
For $S=\{i_1,i_2,\dots,i_{n+1-j}\} \in X_j$
we define the pair $\D_j(S)$ of vertices of $\Lambda(n,m)$ as follows:
Let $S'=S \setminus \{i_1\}$.
If $j$ is odd then
\begin{eqnarray*}
\ \ \D_j(S)=\left\{
\begin{array}{ll}
\big\{ A \big([j-1]\cup S \big), A\big([j] \cup S' \big) \big\}, & \mbox{ if } i_1 =j+1,\smallskip\\
\big\{ A\big([j-1]\cup S\big), B \big([i_1-j-1,i_1-2] \cup S' \big) \big\}, & \mbox{ if }i_1 >j+1,
\end{array}
\right.
\end{eqnarray*}
and if $j$ is even then
\begin{eqnarray*}
\D_j(S)=\left\{ B\big([i_1-j,i_1-2] \cup S\big), B\big([i_1-j,i_1-1] \cup S'\big)\right\}.
\end{eqnarray*}
Since $j=n+1-\# S$,
we simply write $\D_j(S)=\D(S)$.
\end{definition}

\begin{lemma}
\label{6.5}
\
\begin{itemize}
\item[(1)] Suppose that $n$ is even. Then
\begin{itemize}
\item[(a)] for any $F \subset [n+m]$ with $\# F=n$ and $F \ne [n]$,
there is the unique $S \in X$ such that $A(F) \in \D(S)$.
\item[(b)] for any $F \subset [n+m]$ with $\# F=n$ and $F \ne [m+1,m+n]$,
there is the unique $S \in X$ such that $B(F) \in \D(S)$.
\end{itemize}
\item[(2)] Suppose that $n$ is odd. Then
\begin{itemize}
\item[(a)] for any $F \subset [n+m]$ with $\# F=n$,
there is the unique $S \in X$ such that $A(F) \in \D(S)$.
\item[(b)] for any $F \subset [n+m]$ with $\# F=n$, $F \ne [m,m+n-1]$ and $F \ne [m+1,m+n]$,
there is the unique $S \in X$ such that $B(F) \in \D(S)$.
\end{itemize}
\end{itemize}
\end{lemma}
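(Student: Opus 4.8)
The plan is to prove the statement as an explicit bijection: I will show that the combinatorial structure of an $n$-subset $F\subset[n+m]$ determines, in a canonical way, a unique $j$ and a unique $S\in X_j$ together with a specified \emph{slot} of the pair $\D(S)$ occupied by $A(F)$ (resp.\ $B(F)$), unless $F$ is one of the listed exceptional sets. Since $\#X=\binom{n+m}{n}-1$ by Remark~\ref{6.3} and each $\D(S)$ contributes two vertices, a global count shows that the number of $A$- and $B$-vertices lying in some $\D(S)$ is forced to be $2\#X=2\binom{n+m}{n}-2$, which is exactly the total number of $A$- and $B$-vertices minus the stated exceptions; thus exhibiting the bijection automatically forces the exceptional sets to be precisely those in the statement. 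The whole proof therefore reduces to two ``parsing'' arguments, one for the two $A$-slots and one for the three $B$-slots occurring in Definition~\ref{6.4}.

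For the $A$-vertices (parts (1a), (2a)) the governing invariant is the least gap $\mu(F)=\min([n+m]\setminus F)$. Reading Definition~\ref{6.4}, the two $A$-slots $A([j-1]\cup S)$ (present for every odd $j$) and $A([j]\cup S')$ (present when $j$ is odd and $i_1=j+1$) are exactly the sets $F$ with $F\supseteq[j-1]$, $j\notin F$ (so $\mu(F)=j$), respectively $F\supseteq[j]$, $j+1\notin F$ (so $\mu(F)=j+1$). Hence I recover $S$ from $F$ by: if $\mu(F)$ is odd, set $j=\mu(F)$ and $S=F\setminus[\mu(F)-1]$; if $\mu(F)$ is even, set $j=\mu(F)-1$ and $S=\{\mu(F)\}\cup(F\setminus[\mu(F)-1])$. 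In each case $S\in X_j$ (cardinality and range are immediate), and the two rules are disjoint because they are separated by the parity of $\mu(F)$, giving uniqueness. The only $F$ escaping this dichotomy is $F=[n]$, where $\mu(F)=n+1$; it is absorbed by the even rule (with $j=n$, $S=\{n+1\}$) exactly when $n$ is odd, and is unparsable exactly when $n$ is even, matching the exception in (1a) and its absence in (2a).

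The $B$-vertices (parts (1b), (2b)) are the substantive case. Here the relevant data is the initial run of $F$: let $p=\min F$ and let $\ell$ be the length of the maximal run $[p,p+\ell-1]\subseteq F$, so that $p+\ell\notin F$ when a gap exists. Reading Definition~\ref{6.4} backwards, the three $B$-slots are distinguished by the parity of $\ell$ together with the occupancy of the element just past the first gap: if $\ell$ is even the vertex comes from the $j$-even slot $B([i_1-j,i_1-1]\cup S')$ with $j=\ell$; if $\ell$ is odd and $p+\ell+1\in F$ it comes from the $j$-even slot $B([i_1-j,i_1-2]\cup S)$ with $j=\ell+1$; and if $\ell$ is odd with $p+\ell+1\notin F$ it comes from the $j$-odd slot with $i_1>j+1$ and $j=\ell$. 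In each case I read off $i_1$ (either $p+\ell$ or $p+\ell+1$) and let $S$ be the corresponding union, then verify by direct substitution that the relevant entry of $\D(S)$ equals $B(F)$; the key identity is that $[i_1-j-1,i_1-2]$ or $[i_1-j,i_1-1]$ reconstructs exactly the initial run $[p,p+\ell-1]$, so that appending $S'$ (resp.\ $S$) returns $F$.

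The delicate point, and the step I expect to be the main obstacle, is the boundary analysis pinning down the exceptional sets. A set $F$ fails to parse precisely when its initial run reaches the top of $[n+m]$ so that no valid first gap exists, or when the recovered $i_1$ would lie outside $[n+m]$. I will show these degenerate cases are exactly $F=[m+1,m+n]$ (the single run $[m+1,n+m]$ with no gap, unparsable for both parities of $n$) and $F=[m,m+n-1]$ (whose gap sits at the last coordinate $n+m$, forcing $S=\{n+m+1\}\not\subset[n+m]$ in the odd-$\ell$ reading, yet legitimately absorbed by the even slot with $S=\{n+m\}$ when $n$ is even). Confirming that these, and only these, are the obstructions—and in particular that the parity of $n$ toggles $[m,m+n-1]$ between parsable and exceptional—is the crux; the remaining cardinality and range checks ensuring $S\in X_j$ are routine. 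Uniqueness throughout is automatic, since the parsing data ($\mu(F)$, or $(p,\ell)$ together with the occupancy of $p+\ell+1$) is a function of $F$ alone, so at most one pair $(j,S)$ can ever be produced.
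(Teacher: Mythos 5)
Your proposal is correct and takes essentially the same route as the paper: you parse $F$ by its least missing element for the $A$-vertices and by its initial run together with the occupancy of the element just past the first gap for the $B$-vertices, which reproduces the paper's case analysis exactly, including the boundary cases that produce the exceptional sets $[n]$, $[m,m+n-1]$ and $[m+1,m+n]$ according to the parity of $n$. The only cosmetic difference is uniqueness: the paper deduces it from existence by counting against $\# X = {n+m \choose n}-1$, whereas you obtain it directly from the fact that the parsing data is a function of $F$ alone and the slots of Definition \ref{6.4} are discriminated by that data --- both arguments are valid.
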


\begin{proof}
The uniqueness follows from the existence.
Indeed, the number of vertices $A(F)$ and $B(F)$ appearing in (a) and (b)
is $2 {n+m \choose n} -2$.
On the other hand, since $\# X={n+m \choose n} -1$,
the number of vertices which appears in $\D(S)$ for some $S \in X$
is at most $2 {n+m \choose n}-2$.
Thus if it is $2{n+m \choose n} -2$ then each vertex cannot appear twice.
We will prove the existence.
Let $F=\{i_1,i_2,\dots,i_n\} \subset [n+m]$.

We first consider $A(F)$.
Let $k$ be the smallest positive integer which is not in $F$.
Thus $F \supset [k-1]$ and $k \not \in F$.
Let $F=[k-1] \cup F'$ where $\min F' >k$.
If $k$ is odd and $F' \ne \emptyset$ then $A(F)$ is the first vertex of $\D_k(F')$.
If $k$ is even then $A(F)$ is the second vertex of $\D_{k-1}(\{k\} \cup F')$.
These prove (1)-(a) and (2)-(a).

Next, we consider $B(F)$.
Let $l$ be the smallest integer such that $i_{l+1} \ne i_l +1$.
Thus $F \supset \{i_1,i_1+1,\dots,i_1+l-1\}=\{i_1,\dots,i_l\}$
and $i_1+l \not \in F$.
Let $F= [i_1,i_1+l-1]\cup F'$ with $\min F >i_1+l$.

Suppose $F' \ne \emptyset$.
Then $l <n$.
If $l$ is even then $B(F)$ is the second vertex of $\D_l(\{i_1+l\} \cup F')$.
If $l$ is odd and $i_{l+1} >i_1 +l+1$ then $B(F)$ is the second vertex of $\D_l(\{i_1+l+1\}  \cup F')$.
If $l$ is odd and $i_{l+1}=i_1+l+1$ then $B(F)$ is the first vertex of $\D_{l+1}(F')$.

Suppose $F' = \emptyset$, that is, $F=[i_1,i_1+n-1]$ with $1 \leq i_1 \leq m+1$.
If $n$ is even and $i_1 \ne m+1$ then $B(F)$ is the second vertex of $\D_n(\{i_1+n\})$.
If $n$ is odd and $i_1 <m$ then $B(F)$ is the second vertex of $\D_n(\{i_1+n+1\})$.
\end{proof}

We define the total order $\succ$ on $\{\D(S): S \in X\}$ by
$\D_j(S)\succ \D_{j'}(S')$ if (i) $j<j'$ or (ii) $j=j'$ and $S >_\rlex S'$,
where $>_\rlex$ is the reverse lexicographic order.
Thus $S> _\rlex S'$ if the largest integer in the symmetric difference $(S\setminus S') \cup (S' \setminus S)$ is contained in $S'$.
From the proof of Lemma \ref{6.5},
we obtain the next corollary.

\begin{corollary}
\label{6.6.6}
%Let $F \subset [n+m]$ with $\# F=n$.
%\begin{itemize}
%\item[(i)] Let $F=[k-1] \cup F'$ with $\min F' >k$.
%If $A(F)$ appears in $\D_l(S)$ then $l<k$ or $S=F'$.
%\item[(ii)] 
Let $F=[i_1,i_1+k-1] \cup F' \subset [n+m]$ with $\min F' >i_1+k$ and with $\# F=n$.
If $B(F)$ appears in $\D_l(T)$ then 
$l \geq k$ and
$\D_l(T) \preceq \D_k(\{i_1+k\} \cup F')$.
%\end{itemize}
\end{corollary}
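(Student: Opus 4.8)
The plan is to extract the Corollary directly from the case analysis for $B(F)$ already carried out in the proof of Lemma \ref{6.5}. By the uniqueness asserted there, for each admissible $F$ the vertex $B(F)$ lies in exactly one pair $\D_l(T)$, and that proof identifies $(l,T)$ explicitly according to the parity of $k$ and the size of the gap between the initial run $[i_1,i_1+k-1]$ and $\min F'$. Thus the hypothesis ``$B(F)$ appears in $\D_l(T)$'' pins $(l,T)$ down to this unique pair, and it remains only to check, in each case, the two assertions $l \geq k$ and $\D_l(T) \preceq \D_k(U)$, where I write $U=\{i_1+k\}\cup F'$. I would first note that $U\in X_k$ (its cardinality is $n+1-k$ and all its elements exceed $k$, since $i_1\geq 1$ and $\min F'>i_1+k$), so that the comparison under $\succ$ is meaningful.

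The cases where $\D_l(T)=\D_k(U)$ are immediate. If $k$ is even and $F'\neq\emptyset$, the proof of Lemma \ref{6.5} puts $B(F)$ as the second vertex of $\D_k(U)$, so $(l,T)=(k,U)$; if $F'=\emptyset$ and $n$ is even, it puts $B(F)$ as the second vertex of $\D_n(\{i_1+n\})=\D_k(U)$. In both situations $l=k$ and $\D_l(T)=\D_k(U)$, so both assertions hold trivially.

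In the two cases where $T\neq U$ but still $l=k$, namely $k$ odd with $\min F'>i_1+k+1$ (giving $T=\{i_1+k+1\}\cup F'$) and $F'=\emptyset$ with $n$ odd (giving $T=\{i_1+n+1\}$), the only work is a reverse-lexicographic comparison. In each of these, $U$ and $T$ coincide except that $U$ contains $i_1+k$ while $T$ contains $i_1+k+1$, the common part $F'$ containing neither (as $\min F'>i_1+k+1$, respectively $F'=\emptyset$). Hence the symmetric difference $(U\setminus T)\cup(T\setminus U)=\{i_1+k,\,i_1+k+1\}$ has largest element $i_1+k+1\in T$, so $U>_\rlex T$ by definition, whence $\D_l(T)\prec\D_k(U)$. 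The remaining case, $k$ odd with $\min F'=i_1+k+1$, gives $(l,T)=(k+1,F')$; here $l=k+1>k$, and clause (i) in the definition of $\succ$ alone yields $\D_l(T)\prec\D_k(U)$.

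Collecting the cases shows $l\geq k$ and $\D_l(T)\preceq\D_k(U)$ in every instance, proving the Corollary. I do not expect a genuine obstacle: the argument is bookkeeping layered on top of Lemma \ref{6.5}. The only points needing care are verifying that the sets $T$ and $U$ really lie in the relevant $X_j$ (so that $\succ$ is defined on them), and orienting the comparison correctly, so that replacing $i_1+k$ by $i_1+k+1$ strictly \emph{lowers} a set in the $\succ$ order rather than raising it.
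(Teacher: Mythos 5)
Your proof is correct and takes essentially the same route as the paper, which obtains Corollary \ref{6.6.6} by reading off the explicit pair $(l,T)$ from the case analysis for $B(F)$ in the proof of Lemma \ref{6.5} and comparing under $\succ$. Your added bookkeeping --- checking $\{i_1+k\}\cup F' \in X_k$, the vacuity for the excluded $F$, and that replacing $i_1+k$ by $i_1+k+1$ puts the larger element of the symmetric difference in $T$ so that $\D_l(T)\prec \D_k(\{i_1+k\}\cup F')$ --- is exactly what the paper leaves implicit, and you have oriented all the comparisons correctly.
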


Let
$$\big\{\D(S) : S \in X\big\}
=\left\{\D_1 \succ \D_2 \succ \cdots \succ \D_{{n+m \choose n}-1}\right\}.$$
We define the admissible $(n+m+1)$-colored multi-graph $\Lambda^{(k)}$ recursively by
$\Lambda^{(1)}=\Lambda(n,m)$ and
$$\Lambda^{(k+1)}=\del_{\D_{k}} \Lambda^{(k)}$$
for $k=1,2,\dots,{n+ m \choose n} -1$
(these graphs are well defined by Lemma \ref{6.5}).
If $\D_k=\D_j(S)$, we write
$$\Lambda(S)=\Lambda_j(S)=\Lambda^{(k)}
\mbox{ and } \Lambda'(S)=\Lambda^{(k+1)}= \del_{\D(S)} \Lambda^{(k)}.$$
Clearly
the number of vertices of $\Lambda^{(k)}$ is $4 {n+m \choose n} - 2(k-1)$.
Then by Lemma \ref{6.1} the next statement completes the proof of Theorem \ref{1.2}.

\begin{lemma}
\label{6.7}
For $k=1,2,\dots,{n+m \choose n}-1$,
the cancelling $\Lambda^{(k)} \to \Lambda^{(k+1)}$
is a cancelling of a dipole.
\end{lemma}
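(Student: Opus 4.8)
The plan is to prove the lemma by induction on $k$, carrying along an explicit description of the whole graph $\Lambda^{(k)}$: which vertices survive, and for each surviving pair the colors and endpoints of all edges joining them. Writing $\D_k=\{x,y\}$, I must check the two clauses in the definition of a dipole, namely that the color set $C=\{\gamma(e):\phi(e)=\{x,y\}\}$ of edges between $x$ and $y$ in $\Lambda^{(k)}$ is nonempty, and that $x$ and $y$ lie in distinct components of $\Lambda^{(k)}_{[d]\setminus C}$. The feature of $\del$ that must be tracked is that a cancellation merges, for each color outside its type, the two edges at the deleted vertices into one new edge; hence edges between two surviving vertices \emph{accumulate} as the process runs, and the type $C$ of a later dipole may be strictly larger than the color of the obvious edge one starts from. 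Since $\del$ always returns an admissible graph, each $P_{\Lambda^{(k)}}$ is a simplicial poset, and the components of $\Lambda^{(k)}_{[d]\setminus C}$ correspond exactly to the rank-$|C|$ faces of $P_{\Lambda^{(k)}}$ with vertex-colors $C$; the second clause thus says precisely that the color-$C$ faces lying below the facets $x$ and $y$ are \emph{distinct} elements of $P_{\Lambda^{(k)}}$.

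For the first clause I would argue by cases according to the three shapes of $\D_j(S)$ in Definition \ref{6.4}. In the two pure cases --- $j$ odd with $i_1=j+1$, where $\D_j(S)=\{A(F),A(G)\}$ with $G$ obtained from $F$ by the move $j+1\mapsto j$, and $j$ even, where the two $B$-vertices differ by the move $i_1\mapsto i_1-1$ --- there is already in $\Lambda(n,m)$ an (E3) edge of color $j+1$, respectively $i_1$, between the two vertices, and it survives to $\Lambda^{(k)}$ since neither endpoint is deleted before step $k$; hence $C\neq\emptyset$. The mixed case ($j$ odd, $i_1>j+1$, with $\D_j(S)=\{A(F),B(G)\}$ and $F\neq G$) has no edge in the original graph; here the connecting edge is manufactured by the earlier cancellations. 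The (E1) edge incident to $B(G)$, joining it to the vertex $A(G)$ of the same index set, is transported by the rerouting performed by the cancellations preceding $\D_j(S)$ until one endpoint becomes $A(F)$; that all the relevant cancellations do precede $\D_j(S)$ is exactly what the order $\succ$, through Lemma \ref{6.5} and Corollary \ref{6.6.6}, guarantees. The inductive description of $\Lambda^{(k)}$ records this edge and, with any further accumulated edges, the complete type $C$.

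For the second clause --- the genuine content of the lemma --- I would use the reformulation above: $x$ and $y$ are disconnected in $\Lambda^{(k)}_{[d]\setminus C}$ iff their color-$C$ faces differ, equivalently no walk using only colors in $[d]\setminus C$ joins $x$ to $y$. When $|C|=1$ this reduces to checking that $x$ and $y$ carry distinct vertices of the single color in $C$, which for an (E3) edge is immediate from the divergence of the two underlying lattice paths at the relevant step. Once rerouted edges have accumulated one meets dipoles with $|C|\geq 2$, and here I would read off, from the explicit form of the surviving edges, that every $([d]\setminus C)$-colored edge leaving the component of $x$ either returns to that component or ends at an already-deleted vertex, so no $([d]\setminus C)$-walk can cross to $y$. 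Maintaining this bookkeeping through the induction is where essentially all the difficulty lies: the main obstacle is to show that the order $\succ$, together with the constraint of Corollary \ref{6.6.6}, has been arranged so precisely that at the moment $\D_k$ is cancelled the \emph{only} colors realized between its two vertices are those of $C$, and that deleting them leaves the vertices in separate components --- in other words that no stray rerouted edge of a color outside $C$ ever sneaks in to reconnect them.
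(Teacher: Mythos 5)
Your proposal reproduces the correct outline --- first a nonempty type, then disconnection, with the order $\succ$ guaranteeing that the needed cancellations happen first --- but at its core it is a plan, not a proof: both decisive claims are deferred to an ``explicit description of the whole graph $\Lambda^{(k)}$'' that is never constructed. Carrying a complete list of surviving edges with their colors through all ${n+m \choose n}-1$ cancellations is exactly the bookkeeping the paper's argument is designed to avoid, and you concede at the end that this is where ``essentially all the difficulty lies.'' The paper closes the two gaps with two specific devices that are absent from your plan. For disconnection (Lemma \ref{6.8}) it never tracks the evolving graph at all: it fixes, already in $\Lambda(n,m)$, the static cut $X \cup (V\setminus X)$ with $X=\big\{\diamondsuit(\{p_1,\dots,p_n\}): \{p_j,\dots,p_n\} \sqsupseteq S\big\}$, checks from (E1)--(E3) that every edge crossing this cut has color in $\color(S)$, and then proves --- this is where $\succ$ and the reverse-lexicographic comparison actually do their work --- that every earlier dipole $\D_{k'}$ lies entirely on one side of the cut, so that by the elementary Lemma \ref{6.9} the cut is preserved by all reroutings: no rerouted edge can ever cross it. For edge-existence it does not ``transport'' an (E1) edge informally, but proves by induction along $\succ$ the sharper, precisely formulated Lemma \ref{6.12} (three cases, with Lemma \ref{6.11} and Corollary \ref{6.6.6} certifying that the intermediate vertices are still alive), from which Lemma \ref{6.10} extracts a direct edge of each color in $\color(S)$ between the two vertices of $\D(S)$.

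There is also a logical overshoot in your plan that makes the task look harder than it is: you insist on determining the exact type $C$ and worry that ``no stray rerouted edge of a color outside $C$ ever sneaks in to reconnect'' the pair. But $C$ is by definition the set of \emph{all} colors occurring between $x$ and $y$, so any extra accumulated edge between them merely enlarges $C$, shrinks $\Lambda^{(k)}_{[d]\setminus C}$, and makes the disconnection clause easier, never harder. It therefore suffices to prove the two one-sided statements the paper proves: $\color(S) \subseteq C$ (Lemma \ref{6.10}, which gives $C \neq \emptyset$ since $i_1 \in \color(S)$), and disconnection on $\Lambda(S)_{[n+m+1]\setminus \color(S)}$ (Lemma \ref{6.8}), the latter sufficing because $\Lambda(S)_{[n+m+1]\setminus C}$ is then a subgraph of $\Lambda(S)_{[n+m+1]\setminus \color(S)}$. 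The only genuine danger is a walk through \emph{other} vertices using colors outside $\color(S)$, and your proposal offers no mechanism to exclude such walks beyond the unexecuted inductive bookkeeping; the fixed cut of Lemma \ref{6.8}, stable under all earlier cancellations, is precisely that mechanism, and without it (or a substitute invariant) the induction you describe does not close.
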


We prove the above lemma in subsections 6.3 and 6.4
in a series of lemmas.

\begin{example}
Suppose $n=m=2$.
Then
\begin{eqnarray*}
\D_1 &=&\D_1(\{2,3\})=\{ A(\{2,3\}),A(\{1,3\})\},\\
\D_2 &=& \D_1(\{2,4\})=\{ A(\{2,4\}),A(\{1,4\})\},\\
\D_3 &=& \D_1(\{3,4\})=\{ A(\{3,4\}),B(\{1,4\})\},\\
\D_4 &=& \D_2(\{3\})=\{ B(\{1,3\}),B(\{1,2\})\},\\
\D_5 &=& \D_2(\{4\})=\{ B(\{2,4\}),B(\{2,3\})\}.
\end{eqnarray*}
\end{example}

\subsection{Proof of Lemma \ref{6.7}: disconnectivity of $\Lambda(S)$}\

Let $\Lambda^{(k)}=\Lambda_j(S)$
and let
$$\color (S)=\{l \in S : l-1 \not \in S\}.$$
The next lemma gives a part of a proof of Lemma \ref{6.7}.

\begin{lemma}
\label{6.8}
With the same notation as above,
two vertices in $\D(S)$ are disconnected on $(\Lambda(S))_{[n+m+1] \setminus \color(S)}$.
\end{lemma}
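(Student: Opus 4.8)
The plan is to exhibit a weight invariant on the vertices that is constant along every edge whose color lies outside $\color(S)$ and that separates the two vertices of $\D(S)$. Denote by $x,y$ the two vertices of $\D(S)$, write $S=\{i_1<\cdots<i_{n+1-j}\}$ so that $i_1=\min S=\min\color(S)$, and define $w\colon[n+m]\to\ZZ$ by $w(l)=\#\{c\in\color(S):c\le l\}$. For a vertex $X(F)$ of $\Lambda(n,m)$, where $X\in\{A,B,C,D\}$ and $\#F=n$, put $\mu(X(F))=\sum_{l\in F}w(l)$. Reading off the three families of edges, $\mu$ is unchanged along every edge of color $k\notin\color(S)$: an edge of type (E1) or (E2) leaves $F$ fixed, while an edge of type (E3) of color $k$ replaces $k$ by $k-1$ and so changes $\mu$ by $w(k-1)-w(k)$, which equals $0$ precisely when $k\notin\color(S)$. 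Computing $\mu$ on the two vertices of $\D(S)$ in each case of Definition~\ref{6.4}, using that the two underlying subsets differ only at positions $\le i_1$, gives $\mu$-values differing by exactly $1$. Hence in the uncancelled graph $\Lambda(n,m)$ the two vertices already lie on opposite sides of the half-integer level $t=\tfrac12(\mu(x)+\mu(y))$ and are separated by $\mu$ on $(\Lambda(n,m))_{[n+m+1]\setminus\color(S)}$.

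It remains to pass from $\Lambda(n,m)$ to the partially cancelled graph $\Lambda(S)=\Lambda^{(k)}$. I would do this by maintaining, through the whole cancellation sequence, the invariant that \emph{every edge of $\Lambda^{(k')}$ of color in $[n+m+1]\setminus\color(S)$ joins two vertices on the same side of $t$}. This holds for $\Lambda^{(1)}=\Lambda(n,m)$ because such edges preserve $\mu$. For the inductive step, recall that cancelling a dipole $\D(S')=\{x',y'\}$ (the dipoles are pairwise vertex-disjoint by Lemma~\ref{6.5}) deletes $x',y'$ and, for each surviving color $i$, splices the two $i$-colored edges from $a_i$ to $x'$ and from $y'$ to $b_i$ into a single edge from $a_i$ to $b_i$. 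If the invariant holds before the cancellation, then $a_i$ is on the side of $x'$ and $b_i$ on the side of $y'$, so the new edge again joins same-side vertices exactly when $x'$ and $y'$ are themselves on one side of $t$. Thus the invariant persists provided every earlier dipole has both vertices on a single side of $t$; and once it holds for $\Lambda^{(k)}$, the vertices $x,y$, being on opposite sides, cannot be connected in $(\Lambda^{(k)})_{[n+m+1]\setminus\color(S)}$, which is the assertion of the lemma.

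The crux, and the step I expect to be the main obstacle, is the purely combinatorial claim that for every $S'$ with $\D(S')\succ\D(S)$ the two vertices of $\D(S')$ have $\mu$-values on the same side of $t$, equivalently that the $\mu$-interval spanned by $\D(S')$ never contains the gap between the consecutive integers $\mu(x)$ and $\mu(y)$. The balanced dipoles (the even type of Definition~\ref{6.4}, and the odd type with $i_1'=j'+1$) carry equal $\mu$-values on their two vertices and cause no trouble; the delicate ones are the odd dipoles with $\min S'>j'+1$, which join an $A$-vertex to a $B$-vertex with different underlying subsets and may have a nonzero $\mu$-jump. For these I would invoke the reverse-lexicographic ordering together with Corollary~\ref{6.6.6}, which pins down exactly when a vertex $B(F)$ first enters a dipole, to show that any such earlier dipole splices two vertices that both lie strictly above, or both strictly below, the level $t$ determined by $S$. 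Verifying this side condition case by case against the constraints of Corollary~\ref{6.6.6} is the technical heart of the argument; granting it, the invariant $\mu$ certifies the required disconnectivity.
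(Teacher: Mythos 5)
Your set-up is sound and genuinely different from the paper's: the weight $\mu$ is indeed constant along every (E1), (E2) edge and along every (E3) edge of color outside $\color(S)$, and your computation that the two vertices of $\D(S)$ have $\mu$-values differing by exactly $1$ checks out in all three cases of Definition~\ref{6.4} (the point being that $w\equiv 0$ below $i_1=\min\color(S)$). The splicing mechanism in your inductive step is also correct. By contrast, the paper does not use a level set of a linear functional at all: it partitions the vertices into the up-set $X$ of all $\diamondsuit(\{p_1,\dots,p_n\})$ with $\{p_j,\dots,p_n\}\sqsupseteq S$ (componentwise domination) and its complement, observes via (E1)--(E3) that any edge crossing this partition must decrement some $i_\ell\in S$ to $i_\ell-1\notin S$ and hence has color $i_\ell\in\color(S)$, and then uses the general cancellation lemma (Lemma~\ref{6.9}) after verifying directly from Definition~\ref{6.4} and the reverse-lexicographic order that every earlier dipole lies entirely in $X$ or entirely in $V\setminus X$.

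The problem is that your proof stops exactly where the real work begins: the claim that every dipole $\D(S')\succ\D(S)$ has both vertices on the same side of $t$ is announced as ``the technical heart'' and then deferred, so the proposal is not a proof. Worse, your preliminary classification of which earlier dipoles are balanced is wrong, so the deferred case analysis is set up incorrectly. For an even-type dipole $\D_{j'}(T)$ with $T=\{t_1<\cdots\}$, the two underlying sets satisfy $F_1\setminus F_2=\{t_1\}$ and $F_2\setminus F_1=\{t_1-1\}$, so $\Delta\mu=w(t_1)-w(t_1-1)=1$ whenever $t_1\in\color(S)$ (and similarly for the odd type with $t_1=j'+1\in\color(S)$); since $w$ is built from the \emph{fixed} $S$ of the lemma while $t_1=\min T$ ranges over earlier dipoles, this happens frequently. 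Concretely, for $n=m=3$ and $S=\{5\}$, the earlier dipole $\D_2(\{5,6\})=\{B(\{3,5,6\}),B(\{3,4,6\})\}$ has $\mu$-values $2$ and $1$, not equal. So the ``delicate'' cases are much broader than the odd dipoles with $\min T>j'+1$, and Corollary~\ref{6.6.6} --- which only records in which dipole a given $B$-vertex appears, and says nothing about $\mu$-levels --- is not an adequate tool for them. Note also that the paper's verified non-crossing statement is about the up-set $X$, which is not a $\mu$-level set, so you cannot import it to close your gap; your same-side claim would need its own case analysis over Definition~\ref{6.4} and the rev-lex order, of essentially the same length as the paper's, and that analysis is missing.
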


We need the following technical but obvious lemma.

\begin{lemma}
\label{6.9}
Let $\Lambda$ be an admissible $d$-colored multi-graph on the vertex set $V$ and $T \subset [d]$.
Let $X \cup (V \setminus X)$ be a partition of $V$ such that,
for all $x \in X$ and $y \in V \setminus X$,
$x$ and $y$ are disconnected on $\Lambda_T$.
If $u$ and $v$ are vertices in $X$,
then, for all $x \in X \setminus \{u,v\}$ and $y \in V \setminus X$,
$x$ and $y$ are disconnected on $(\del_{\{u,v\}} \Lambda)_T$.
\end{lemma}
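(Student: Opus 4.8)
The plan is to reduce the statement about disconnection to a statement about single edges: in any multi-graph, two vertices lying in different blocks of a vertex partition are disconnected as soon as no edge joins the two blocks. So I would fix the partition $(X \setminus \{u,v\}) \sqcup (V \setminus X)$ of the vertex set $V' = V \setminus \{u,v\}$ of $\del_{\{u,v\}}\Lambda$, and show that no edge of $(\del_{\{u,v\}}\Lambda)_T$ has one endpoint in each block. Since the hypothesis already guarantees that $X$ and $V \setminus X$ are disconnected on $\Lambda_T$ — in particular that no edge of $\Lambda_T$ joins them — the only possible danger comes from the new edges created by the cancellation, and this is where the real content lies.

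First I would apply the definition of the cancelling operation with $x = u$ and $y = v$, setting $C = \{\gamma(e) : \phi(e) = \{u,v\}\}$. Then the edges of $(\del_{\{u,v\}}\Lambda)_T$ split into two kinds: the old edges $e \in E$ with $\gamma(e) \in T$ and $\phi(e) \cap \{u,v\} = \emptyset$; and, for each color $i \in T \setminus C$, the new edge $f_i$ joining $a_i$ and $b_i$, where $a_i$ and $b_i$ are the vertices determined by the existence of color-$i$ edges $\{a_i, u\}$ and $\{v, b_i\}$ in $\Lambda$. An old edge $e$ has both endpoints in $V'$, and by hypothesis $\Lambda_T$ has no edge running between $X$ and $V \setminus X$; hence both endpoints of $e$ lie in the same block and $e$ does not cross the partition.

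The heart of the argument is the treatment of the new edges. Fix $i \in T \setminus C$. The edge $\{a_i, u\}$ has color $i \in T$, so it lies in $\Lambda_T$ and makes $a_i$ adjacent to $u$ there; since $u \in X$ and the hypothesis forbids any connection between $X$ and $V \setminus X$ on $\Lambda_T$, we must have $a_i \in X$, and likewise $\{v, b_i\}$ forces $b_i \in X$. Because $i \notin C$, the color-$i$ edge at $u$ is not $\{u,v\}$, so $a_i \neq v$ and $a_i \neq u$, and symmetrically $b_i \in V' \setminus \{u\}$; hence both endpoints of $f_i$ in fact lie in $X \setminus \{u,v\}$. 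Thus no new edge crosses the partition either.

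Combining the two cases, every edge of $(\del_{\{u,v\}}\Lambda)_T$ has both endpoints in the same block of the partition $(X \setminus \{u,v\}) \sqcup (V \setminus X)$, so any path stays within one block; consequently any $x \in X \setminus \{u,v\}$ and any $y \in V \setminus X$ are disconnected on $(\del_{\{u,v\}}\Lambda)_T$, as required. I do not expect a genuine obstacle, consistent with the lemma being labelled obvious: the one point deserving care is checking that the inserted edges $f_i$ cannot reach from $X$ into $V \setminus X$, and this is exactly the step where the disconnectedness hypothesis on $\Lambda_T$ is invoked, via the fact that $a_i$ and $b_i$ inherit the block of $u$ and $v$.
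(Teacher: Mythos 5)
Your proof is correct, and it takes the only natural route: the paper states Lemma \ref{6.9} as ``technical but obvious'' and gives no proof, and your argument is precisely the verification being left to the reader. You correctly isolate the one point with content --- that for $i \in T \setminus C$ the endpoints $a_i, b_i$ of the new edge $f_i$ must lie in $X$ (since the color-$i$ edges $\{a_i,u\}$, $\{v,b_i\}$ lie in $\Lambda_T$ and $u,v \in X$) and are distinct from $u$ and $v$ (no loops, and $i \notin C$ rules out $a_i = v$, $b_i = u$) --- so no edge of $(\del_{\{u,v\}}\Lambda)_T$ crosses the partition and the disconnectedness follows.
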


\begin{proof}[Proof of Lemma \ref{6.8}]
Let
$$S=\{i_1,i_2,\dots,i_{n+1-j}\}.$$
For $\{p_1,p_2,\dots,p_{n+1-j}\} \subset [n+m]$,
we write $\{p_1,p_2,\dots,p_{n+1-j}\} \sqsupseteq  S$
if $p_l \geq i_l$ for all $l$.
Let
\begin{eqnarray*}
X&=&\big\{A\big(\{p_1,\dots,p_n\}\big): \{p_j,\dots,p_n\} \sqsupseteq  S\big\}\\ 
&&\bigcup\big\{B\big(\{p_1,\dots,p_n\}\big): \{p_j,\dots,p_n\} \sqsupseteq  S\big\}\\
&&\bigcup\big\{C\big(\{p_1,\dots,p_n\}\big): \{p_j,\dots,p_n\} \sqsupseteq  S\big\}\\
&&\bigcup\big\{D\big(\{p_1,\dots,p_n\}\big): \{p_j,\dots,p_n\} \sqsupseteq  S\big\}.
\end{eqnarray*}
Let $V$ be the set of vertices of $\Lambda(n,m)$.
We claim that the partition $X \cup (V \setminus X)$
and the set of colors $T=[n+m+1] \setminus \color(S)$
satisfy the assumption of Lemma \ref{6.9} for $\Lambda(n,m)$.

We use the description (E1), (E2) and (E3) of edges of $\Lambda(n,m)$.
By the description,
if $e_1,\dots,e_l$ is a path on $\Lambda(n,m)$ from $x \in X$ to $y \in V \setminus X$,
then there is an edge $e_q$ whose vertices are of the form
$$x'=\diamondsuit \big(\{p_1,\dots,p_{j-1},i_1,i_2,\dots,i_{n+1-j}\}\big)$$
and
$$y'=\diamondsuit \big(\{p_1,\dots,p_{j-1},i_1,\dots,i_{\ell-1},i_\ell-1,i_{\ell+1},\dots,,i_{n+1-j}\}\big)$$
with $i_\ell-1 \not \in S$,
where $\diamondsuit$ is $A,B,C$ or $D$.
Also such an edge $e_q$ has color $i_\ell$ by (E3).
Since $i_l \in S$, we have $e_q \not \in \Lambda(n,m)_T$.
Thus $e_1,\dots,e_l$ is not a path on $\Lambda(n,m)_T$.
This fact shows that $x \in X$ and $y \in V\setminus S$ are disconnected on $\Lambda(n,m)_T$.

Now by lemma \ref{6.9}
what we must prove is that, for any integer $k' <k$,
$\D_{k'}$ is contained in either $X$ or $V \setminus X$.
Let $\D_{k'}=\D_{j'}(T)=\{\square(\{p_1,\dots,p_n\}),\square'(\{q_1,\dots,q_n\}) \}$
where $\square$ and $\square'$ are either $A$ or $B$.
Since $\D_{k'} \succ \D_k$, we have $j'<j$ or $j'=j$ and $T >_\rlex S$.
If $j' <j$ then $\{p_j,\dots,p_n\}=\{q_j,\dots,q_n\}$ by Definition \ref{6.4},
which guarantees $\D_{k'} \subset X$ or $\D_{k'} \subset V \setminus X$.
Suppose $j'=j$.
Then $T>_\rlex S$.
By Definition \ref{6.4},
$\{p_j,\dots,p_n\} \geq_\rlex T$ and $\{q_j,\dots,q_n\} \geq_\rlex T$.
Hence we have $\D_{k'} \subset V \setminus X$, as desired.
\end{proof}

\subsection{Proof of Lemma \ref{6.7}: existence of edges with desired colors}\

We say that two vertices $u$ and $v$ in $\Lambda^{(k)}$ are
\textit{directly connected on $\Lambda^{(k)}$ by colors $H \subset [n+m+1]$} if, for each $i \in H$,
there is an edge $e$ of $\Lambda^{(k)}$ whose vertices are $u$ and $v$ and whose color is $i$.
The next lemma and Lemma \ref{6.8}
prove Lemma \ref{6.7}.

\begin{lemma}
\label{6.10}
For every $S \in X$, the vertices in $\D(S)$ are directly connected on $\Lambda(S)$ by colors $\color (S)$.
\end{lemma}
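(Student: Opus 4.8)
The plan is to establish, for each color $c\in\color(S)$ separately, a color-$c$ edge joining the two vertices of $\D(S)$ in $\Lambda(S)$. The whole argument rests on a persistence principle that makes the problem local. By Lemma \ref{6.5} each of the two vertices of $\D(S)$ belongs to no dipole preceding $\D(S)$, so both survive in every graph produced before $\Lambda(S)$; and since a cancelling $\del_{\{x,y\}}$ alters only edges meeting $\{x,y\}$, an edge joining two vertices that are never cancelled can be neither deleted nor rerouted. Hence it suffices to exhibit, for each $c\in\color(S)$, a color-$c$ edge between the two vertices of $\D(S)$ at \emph{some} stage at or before $\Lambda(S)$.

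Write $S=\{i_1<\cdots<i_{n+1-j}\}$, so that $i_1=\min\color(S)$. Reading off Definition \ref{6.4}, when the two vertices of $\D(S)$ are both of type $A$ (the subcase $i_1=j+1$ with $j$ odd) or both of type $B$ ($j$ even), their vertex sets differ exactly by exchanging a consecutive pair of integers, and so the edge type (E3) already furnishes a color-$i_1$ edge in $\Lambda(n,m)$. This disposes of the smallest color natively. In the remaining, mixed subcase the two sets differ, no edge of type (E1), (E2) or (E3) joins an $A$-vertex to a $B$-vertex with unequal sets, and so every color of $\color(S)$, including $i_1$, must be created by an earlier cancellation.

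For each remaining color $c\in\color(S)$ I would trace the required edge to a single earlier cancellation. Put $T=(S\setminus\{c\})\cup\{c-1\}$; since $c\in\color(S)$ we have $c-1\notin S$, and one checks $c-1\ge j+1$, so $T\in X_j$; moreover the larger element $c$ of the symmetric difference $\{c-1,c\}$ lies in $S$, whence $T>_\rlex S$ and therefore $\D(T)\succ\D(S)$. Using Definition \ref{6.4} together with (E1)--(E3) I would verify, entirely inside the fixed graph $\Lambda(n,m)$, that the two vertices of $\D(T)$ have native color-$c$ neighbours equal to the two vertices of $\D(S)$. Because the vertices of $\D(S)$ are cancelled only after $\D(T)$, the persistence principle shows that these color-$c$ edges survive unrerouted up to the stage $\Lambda(T)$; in particular color $c$ is \emph{not} among the colors joining the two vertices of $\D(T)$ there, so the cancelling $\del_{\D(T)}$ inserts a genuinely new color-$c$ edge between the two vertices of $\D(S)$, which then persists to $\Lambda(S)$.

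The main obstacle is the verification in the third step: computing the native color-$c$ neighbours of the two vertices of each $\D(T)$ and matching them against $\D(S)$. This forces a case analysis across the parity of $j$, the several shapes of $\D(T)$ in Definition \ref{6.4}, the transitions between types $A$ and $B$ governed by (E1), and the boundary conventions $\{k,k-1\}=\{1\}$ and $\{k,k-1\}=\{n+m\}$ for the first and last colors. The essential simplification is that, thanks to the order $\D(T)\succ\D(S)$ and the persistence principle, all of these computations may be carried out once and for all in $\Lambda(n,m)$, so that one never has to follow the evolving graph; this is what keeps the bookkeeping finite and, together with Lemma \ref{6.8}, completes the proof of Lemma \ref{6.7}.
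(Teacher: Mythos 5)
Your plan agrees with the paper's proof for most colors, and there it is sound: for $c\in\color(S)\setminus\{i_1\}$ the two vertices of $\D(T)$, $T=(S\setminus\{c\})\cup\{c-1\}$, really are joined by \emph{native} (E3) edges of color $c$ to the corresponding vertices of $\D(S)$ (the prefix intervals in Definition \ref{6.4} are unchanged since $c-1>i_1$), all four vertices survive to the stage $\Lambda(T)$ by the uniqueness in Lemma \ref{6.5}, and $\del_{\D(T)}$ inserts the desired edge, which persists; likewise the native (E3) edge of color $i_1$ handles the two same-letter cases. The genuine gap is the mixed case ($j$ odd, $i_1>j+1$) for the color $c=i_1$. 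Your key simplification --- that the verification ``may be carried out once and for all in $\Lambda(n,m)$'' because the vertices of $\D(T)$, $T=\{i_1-1\}\cup S'$, have native color-$i_1$ neighbours equal to the vertices of $\D(S)$ --- is false as soon as $i_1\geq j+3$. In that range $\D(T)=\big\{A([j-1]\cup\{i_1-1\}\cup S'),\,B([i_1-j-2,i_1-3]\cup S')\big\}$, and since $\{i_1-1,i_1\}\cap\big([i_1-j-2,i_1-3]\cup S'\big)=\emptyset$, the native color-$i_1$ edge at the second vertex is the (E1) edge to $A([i_1-j-2,i_1-3]\cup S')$, not to $B([i_1-j-1,i_1-2]\cup S')\in\D(S)$. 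Worse, following the recipe in the proof of Lemma \ref{6.5} one checks that $A([i_1-j-2,i_1-3]\cup S')$ lies in a dipole strictly preceding $\D(T)$ (it is $\D_1(F)$ when $i_1\geq j+4$, and $\D_j(\{j+1\}\cup S')$ when $i_1=j+3$), so this native edge does not persist: it is rerouted, possibly many times, before $\D(T)$ is cancelled. Your persistence principle protects only edges whose \emph{two} endpoints both survive, and here one endpoint dies early.

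This is exactly why the paper cannot stay inside the fixed graph $\Lambda(n,m)$ and instead runs the simultaneous induction of Lemmas \ref{6.11} and \ref{6.12} along the order $\succ$: the color-$i_1$ edge between the vertices of $\D(S)$ is produced by Lemma \ref{6.12}(ii) applied to $\hat S=\{i_1-1\}\cup S'$, whose proof consumes statement (i) for the still earlier sets $\{i_1-2\}\cup S'$, $\{i_1-3\}\cup S'$, and so on, terminating in the native computations of Case 1 at $\{j+1\}\cup S'$ --- which is the one subcase where your native check does succeed (your argument is fine when $i_1=j+2$, but not beyond). Since your proposal contains no such induction, the ``main obstacle'' you defer --- computing the color-$i_1$ neighbours of the vertices of $\D(T)$ \emph{at cancellation time} --- cannot be resolved by a computation in $\Lambda(n,m)$, and the proof of this case does not close as written.
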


We need two technical lemmas.

\begin{lemma}
\label{6.11}
Let $S=\{i_1,\dots,i_{n+1-j}\} \in X_j$ and $S'=S \setminus \{i_1\}$.
\begin{itemize}
\item[(i)] Suppose $j$ is odd. Then $B([i_1-j,i_1-1] \cup S')$
and $B([i_1-j+1,i_1] \cup S')$ are vertices of $\Lambda(S)$.
Moreover, if $i_1+1 \not \in S$ and $i_1+1 \leq n+m$, then $A([j-1]\cup \{i_1+1\} \cup S')$ is a vertex of $\Lambda(S)$.
\item[(ii)] Suppose $j$ is even.
Then $A([j] \cup S')$ and $B([i_1-j+1,i_1] \cup S')$ are vertices of $\Lambda(S)$.
\end{itemize}
\end{lemma}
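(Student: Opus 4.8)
The plan is to translate the statement ``$v$ is a vertex of $\Lambda(S)$'' into a statement about the cancellation order $\succ$. Write $\Lambda(S)=\Lambda_j(S)=\Lambda^{(k_0)}$, so that $\Lambda(S)$ is obtained from $\Lambda(n,m)$ by the cancellations $\del_{\D_1},\dots,\del_{\D_{k_0-1}}$; these delete exactly the vertices lying in some $\D(T)$ with $\D(T)\succ\D(S)$, and since a cancellation only removes vertices (it creates edges, not vertices), no deleted vertex ever returns. By Lemma \ref{6.5} every relevant $A$- or $B$-vertex lies in a \emph{unique} pair $\D(T)$. Hence $v$ is a vertex of $\Lambda(S)$ precisely when either $v$ lies in no $\D(T)$ at all (the exceptional never-deleted vertices $A([n])$, $B([m,m+n-1])$, $B([m+1,m+n])$ singled out in Lemma \ref{6.5}), or the unique $\D(T)\ni v$ satisfies $\D(T)\preceq\D(S)$. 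So for each listed vertex it suffices to locate, or bound from above in $\succ$, the pair $\D(T)$ that deletes it, and compare it with $\D_j(S)$.

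For the $B$-vertices this is exactly what Corollary \ref{6.6.6} provides. Writing $a=\min F$ and letting $[a,a+k-1]$ be the maximal initial consecutive run of $F$, with $F''$ the part of $F$ beyond it, Corollary \ref{6.6.6} gives $\D(T)\preceq\D_k(\{a+k\}\cup F'')$. For $F=[i_1-j,i_1-1]\cup S'$ the run always has length $k=j$ (since $i_1\notin F$), with $F''=S'$ and $a+k=i_1$, so the bound is $\D_j(\{i_1\}\cup S')=\D_j(S)=\D(S)$ itself; this settles (i)(a). For $F=[i_1-j+1,i_1]\cup S'$ there are two possibilities: if $i_2>i_1+1$ the run again has length $j$ and the bound is $\D_j(\{i_1+1\}\cup S')$, while if $i_2=i_1+1$ the run merges with the initial block of $S$ and has length strictly greater than $j$, so the bounding pair has index $>j$. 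This disposes of (i)(b) and of (ii)(b).

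For the $A$-vertices of (i)(c) and (ii)(a) I would instead use the explicit recipe in the proof of Lemma \ref{6.5}: letting $k$ be the smallest positive integer not in $F$ reads off the pair containing $A(F)$ directly. For $A([j-1]\cup\{i_1+1\}\cup S')$ one finds $k=j$ (odd) and $\D(T)=\D_j(\{i_1+1\}\cup S')$; for $A([j]\cup S')$ one finds $k=j+1$ (odd) and $\D(T)=\D_{j+1}(S')$. In every case the comparison with $\D(S)=\D_j(S)$ reduces to one of two elementary facts: either the bounding index strictly exceeds $j$, whence $\D(T)\prec\D(S)$ immediately; or the index equals $j$ and a single reverse-lexicographic comparison of $S=\{i_1\}\cup S'$ against $\{i_1+1\}\cup S'$ applies, and since their symmetric difference is $\{i_1,i_1+1\}$ with largest element $i_1+1$ lying in the latter, one gets $S>_\rlex\{i_1+1\}\cup S'$, hence $\D_j(\{i_1+1\}\cup S')\prec\D_j(S)$. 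Either way $\D(T)\preceq\D(S)$, so $v$ survives in $\Lambda(S)$.

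The run-length bookkeeping is routine; the main obstacle is the degenerate and boundary cases. When $S'=\emptyset$ (equivalently $j=n$) several of the displayed vertices collapse onto the exceptional vertices of Lemma \ref{6.5}: for instance $A([j]\cup S')=A([n])$ when $j=n$ is even, and $B([i_1-j,i_1-1]\cup S')=B([m,m+n-1])$ when $i_1=m+n$. These are never deleted, hence automatically present, but one must verify that they genuinely fall outside the range producing a bounding pair (so that Corollary \ref{6.6.6} and the $A$-recipe are not misapplied), and that the hypotheses $i_1+1\le n+m$ and $i_1+1\notin S$ of (i)(c) keep $\{i_1+1\}\cup S'$ inside $X_j$. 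Checking that every case lands in exactly one of the two buckets---a bona fide $\D(T)\preceq\D(S)$, or a never-deleted exceptional vertex---is where the care lies.
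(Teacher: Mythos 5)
Your proof is correct and takes essentially the same route as the paper's: the same reduction via Lemma \ref{6.5} (a listed vertex survives into $\Lambda(S)$ iff it lies in no pair $\D(T)$ at all or its unique pair satisfies $\D(T)\preceq\D(S)$), Corollary \ref{6.6.6} with the same run-length bookkeeping for the $B$-vertices, and Definition \ref{6.4} (equivalently the recipe from the proof of Lemma \ref{6.5}) together with the reverse-lexicographic comparison $S>_\rlex \{i_1+1\}\cup S'$ for the $A$-vertices. Your explicit handling of the degenerate cases ($S'=\emptyset$ and the exceptional never-deleted vertices) is in fact slightly more careful than the paper, which absorbs them into the conditional phrasing ``if $A(F)$ (or $B(F)$) appears in some $\D(T)$''.
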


\begin{proof}
By Lemma \ref{6.5},
to prove that $A(F)$ (or $B(F)$) is a vertex of $\Lambda(S)$,
what we must prove is that it appears in some $\D(T)$ with $\D(T) \preceq \D(S)$
or it does not appear in any $\D(T)$.

(i) If $B([i_1-j,i_1-1] \cup S')$ or $B([i_1-j+1,i_1] \cup S')$
appears in some $\D(T)$, then Corollary \ref{6.6.6} says $\D(T) \preceq \D_j(\{ i_1\} \cup S')=\D_j(S)$.
If $i_1+1 \not \in S$ then $A([j-1] \cup \{i_1+1\} \cup S')$
appears in $\D_j(\{i_1+1\} \cup S') \prec \D_j(S)$ by Definition \ref{6.4}.

(ii)
If $A([j] \cup S')$ appears in some $\D_l(T)$,
then by Definition \ref{6.4}
$A([j] \cup S') \in \D_{j+1}(S') \prec D_j(S)$.
Also, if $B([i_1+j+1, i_1] \cup S')$ appears in some $\D(T)$
then Corollary \ref{6.6.6} says
$\D(T) \preceq \D(\{i_1+1\} \cup S') \prec \D_j(S)$.
\end{proof}

\begin{lemma}
\label{6.12}
Let $S=\{i_1,\dots,i_{n+1-j}\} \in X_j$ and $S'=S \setminus \{i_1\}$.
\begin{itemize}
\item[(i)] If $j$ is odd then $B([i_1-j,i_1-1] \cup S')$ and
$B([i_1-j+1,i_1]\cup S')$ are directly connected on $\Lambda'(S)$ by colors
$$H=\big\{ r \in [i_1+2,n+m+1]: \{r-1,r \} \cap S'=\emptyset\big\}.$$
\item[(ii)] If $j$ is odd and $i_1+1 \not \in S$, where $i_1+1 \leq n+m$,
then $A([j-1] \cup\{i_1+1\} \cup S')$ and
$B([i_1-j,i_1-1]\cup S')$ are directly connected on $\Lambda'(S)$ by color $i_1+1$.
\item[(iii)] If $j$ is even then $A([j] \cup S')$ and $B([i_1-j+1,i_1] \cup S')$
are directly connected on $\Lambda'(S)$ by colors
$$H'=\big\{ r \in [i_2+2,n+m+1]: \{r-1,r \} \cap S'=\emptyset\big\}.$$
\end{itemize}
\end{lemma}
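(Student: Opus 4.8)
The plan is to analyse each color separately. For a fixed color $r \in [n+m+1]$, the edges of color $r$ form a perfect matching on the vertex set of every graph in the sequence $\Lambda(n,m)=\Lambda^{(1)} \to \Lambda^{(2)} \to \cdots$, and a single cancellation $\del_{\{x,y\}}$ acts on this matching transparently: if $r$ is a color of an edge between $x$ and $y$ it deletes that edge, and otherwise it \emph{splices}, replacing the two color-$r$ edges $\{a_r,x\}$ and $\{y,b_r\}$ by $\{a_r,b_r\}$. I will argue by induction along the order $\succ$: when treating $\D(S)=\D_j(S)$ I may assume Lemma \ref{6.10} for every $T$ with $\D(T)\succ\D(S)$, and---as part of the same induction---that the set of colors joining the two vertices of $\D(T)$ is exactly $\color(T)$. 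Hence each earlier cancellation $\del_{\D(T)}$ is the cancellation of a dipole whose consumed colors are known precisely, and the action on the color-$r$ matching is fully controlled. Since in each of (i)--(iii) the two target vertices avoid $\D(S)$ (they are not among the vertices listed in Definition \ref{6.4} for $S$), it will suffice either to locate the claimed color-$r$ edge in $\Lambda(S)$ and note it survives, or to produce it as an edge created by $\del_{\D(S)}$ itself.

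First I would pin down the initial color-$r$ matching in $\Lambda(n,m)$ using (E1)--(E3). Set $F_1=[i_1-j,i_1-1]\cup S'$ and $F_2=[i_1-j+1,i_1]\cup S'$. For $r\in H$ in case (i) the hypotheses $r\geq i_1+2$ and $\{r-1,r\}\cap S'=\emptyset$ force $\{r-1,r\}\cap F_1=\emptyset$ and $\{r-1,r\}\cap F_2=\emptyset$, so by (E1) the color-$r$ edges at $B(F_1)$ and $B(F_2)$ initially go to the corresponding $A$-vertices; the same computation with $r\geq i_2+2$ handles case (iii). I would then follow the ``zig-zag'' induced by the splicings: starting from $B(F_1)$, whose current color-$r$ partner gets cancelled at some step $\D(T)\succeq\D(S)$, I replace the partner by the color-$r$ neighbor of the \emph{other} vertex of $\D(T)$, reading off that other vertex and its neighbor from Definition \ref{6.4} and (E1)--(E3). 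Lemma \ref{6.5} identifies exactly which dipole cancels each intermediate vertex, and Corollary \ref{6.6.6} controls the order in which they are cancelled; the goal is to show that this chain of splicings moves the partner of $B(F_1)$ onto $B(F_2)$ precisely once all dipoles $\succeq\D(S)$ have been performed, so that the desired color-$r$ edge is present in $\Lambda'(S)$. Lemma \ref{6.11} guarantees that the two endpoints themselves survive to $\Lambda'(S)$.

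Case (ii) is a single color $r=i_1+1$, and since $i_1+1\notin S$ we have $i_1+1\notin\color(S)$, so this color is spliced, not consumed, at the step $\D(S)$. Here I would compute the color-$(i_1+1)$ neighbors of the two vertices of $\D(S)$ directly. Using (E3) one checks that the neighbor of the $A$-vertex $A([j-1]\cup S)$ of $\D(S)$ is $A([j-1]\cup\{i_1+1\}\cup S')$, which is already the first claimed endpoint; the analysis then splits according to whether $i_1=j+1$ or $i_1>j+1$ (and, when $i_1=j+1$, according to $\min S'$), exactly as the forms in Definition \ref{6.4} do. In the base subcase the claimed edge is literally the edge created by $\del_{\D(S)}$, and in the remaining subcases it is obtained from the same one-step or short zig-zag splicing as above.

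The main obstacle is the bookkeeping in the zig-zag: I must verify that the endpoints of the spliced chain are \emph{exactly} the sets $[i_1-j,i_1-1]\cup S'$, $[i_1-j+1,i_1]\cup S'$ and $[j]\cup S'$ named in the statement, and not some neighboring set. This rests on matching the ``run structure'' of $S$ recorded by $\color(S)$ against the explicit vertex-to-dipole assignment of Lemma \ref{6.5}, and it requires separating the same parity cases ($j$ odd versus even, $i_1=j+1$ versus $i_1>j+1$) that already appear in Definition \ref{6.4}. I also expect genuine care to be needed at the boundary colors, where the conventions $\{k,k-1\}=\{1\}$ for $k=1$ and $\{k,k-1\}=\{n+m\}$ for $k=n+m+1$ in (E1)--(E3) must be tracked; the constraints $r\geq i_1+2$, $r\geq i_2+2$ and $i_1+1\leq n+m$ built into $H$, $H'$ and into (ii) are precisely what keep these boundary interactions, and any stray (E2)/(E3) edges, out of the computation.
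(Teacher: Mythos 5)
Your skeleton matches the paper's proof in outline: induction along the order $\succ$, initial color-$r$ partners computed from (E1)--(E3), survival of the named endpoints via Lemma \ref{6.11}, and realization of the desired edge as the splice created by $\del_{\D(S)}$ itself. Your preliminary computations are also correct (for $r \in H$ the unique color-$r$ edge at $B([i_1-j,i_1-1]\cup S')$ in $\Lambda(n,m)$ does go to the corresponding $A$-vertex, and your ``exactness'' claim that the colors joining the two vertices of $\D(T)$ are precisely $\color(T)$ does follow, from Lemma \ref{6.8} combined with Lemma \ref{6.10}). The genuine gap is in how you close the zig-zag. You propose, when the current partner of $B(F_1)$ is consumed by some dipole $\D(T)$, to ``replace the partner by the color-$r$ neighbor of the other vertex of $\D(T)$, reading off that other vertex and its neighbor from Definition \ref{6.4} and (E1)--(E3).'' That step fails beyond the first level of the recursion: at the moment $\D(T)$ is cancelled, the color-$r$ neighbor of its other vertex is in general \emph{not} an (E1)--(E3) edge of $\Lambda(n,m)$ but an edge created by an earlier splice. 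Concretely, in the paper's Case 1 (here $j \geq 3$ odd, $i_1=j+1$), the color-$r$ partner of $A([j-1]\cup S)$ at the time $\D(S)$ is cancelled is $B([2,j]\cup S)$, along an edge created at the earlier cancellation $\del_{\D(\{j\}\cup S)}$; no (E1)--(E3) edge of $\Lambda(n,m)$ joins these two vertices, so your reading-off rule returns the wrong partner.

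The structural reason is that your inductive hypothesis is too weak: Lemma \ref{6.10} plus exactness for earlier dipoles tells you only \emph{which} colors are deleted at each cancellation, not \emph{where} the surviving colors are rerouted, and it is the rerouting data that drives the zig-zag. The paper's fix is to make the statement of Lemma \ref{6.12} itself the induction hypothesis and to invoke it at the specific earlier sets $\hat S=\{i_1-1\}\cup S'\in X_j$ and $\tilde S=\{i_1-1\}\cup S\in X_{j-1}$ (or $\{j\}\cup S$ when $i_1=j+1$), together with the persistence principle that an edge between two vertices surviving to a later $\Lambda^{(l)}$ remains present; this identifies the current partners of both vertices of $\D(S)$ in a single step, whence the spliced edge lands exactly on the claimed pair, with the three parity cases of Definition \ref{6.4} handled separately (the paper's statements (6.1)--(6.8)). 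Your final paragraph correctly flags this bookkeeping as ``the main obstacle,'' but that bookkeeping \emph{is} the content of the lemma: to carry out your fully unrolled matching-orbit computation one would need a closed formula for the current color-$r$ partner of every vertex at every stage, a strictly stronger statement than Lemma \ref{6.12} that your proposal neither states nor proves. As written, the proof is therefore incomplete, and its one concrete mechanism for the recursive step is incorrect.
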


\begin{proof}
We prove the statement by induction on the total order $\succ$ on $\{\D(T): T \in X\}$.
Note that all vertices appearing in the statements are vertices of $\Lambda(S)$
by Lemma \ref{6.11}.

We often use the following fact:
if two vertices are directly connected on $\Lambda^{(k)}$
by colors $C$ and if they are still vertices of $\Lambda^{(l)}$ with $l>k$
then they are  
directly connected on $\Lambda^{(l)}$ by colors $C$.

\textit{Case 1.}
Suppose $j$ is odd and $i_1=j+1$.
Then
$$\D(S)=\big\{A \big([j-1] \cup S \big), A \big([j] \cup S' \big) \big\}.$$
By the description (E1) of edges in $\Lambda(n,m)$,
\begin{itemize}
\item[(6.1)]
$A([j] \cup S')$ and $B([i_1-j,i_1-1] \cup S')=B([j] \cup S')$
are directly connected on $\Lambda(n,m)$ by colors $H$ (and $j+2$ if $j+2 \not \in S$).
\end{itemize}
By applying the induction hypothesis to $\tilde S= \{j \} \cup S \in X_{j-1}$,
\begin{itemize}
\item[(6.2)]
$A([j-1] \cup S)$ and $B([2,j] \cup S)$
are directly connected on $\Lambda (S)$ by colors $H$,
\end{itemize}
where the above statement follows from (E1) when $j=1$.
Also, by (E3),
\begin{itemize}
\item[(6.3)]
if $i_1+1 \not \in S$ then
$A([j-1] \cup S)$ and $A([j-1] \cup\{i_1+1\} \cup S')$
are directly connected on $\Lambda(n,m)$ by color $i_1+1=j+2$.
\end{itemize}
Then it is straightforward that (i) and (ii) follow from (6.1), (6.2), (6.3)
and the definition of cancellations.
See Figure 5.

\begin{center}
%WinTpicVersion3.08
\unitlength 0.1in
\begin{picture}( 51.3000, 18.8000)(  7.0000,-23.0000)
% STR 2 0 3 0
% 3 1405 705 1405 805 5 0
% $A_0$
\put(14.0500,-8.0500){\makebox(0,0){$A_0$}}%
% CIRCLE 2 0 3 0
% 4 1405 805 1405 925 1405 925 1405 925
% 
\special{pn 8}%
\special{ar 1406 806 120 120  0.0000000 6.2831853}%
% STR 2 0 3 0
% 3 1410 1305 1410 1405 5 0
% $A_1$
\put(14.1000,-14.0500){\makebox(0,0){$A_1$}}%
% CIRCLE 2 0 3 0
% 4 1410 1405 1410 1525 1410 1525 1410 1525
% 
\special{pn 8}%
\special{ar 1410 1406 120 120  0.0000000 6.2831853}%
% STR 2 0 3 0
% 3 1410 1905 1410 2005 5 0
% $A_2$
\put(14.1000,-20.0500){\makebox(0,0){$A_2$}}%
% CIRCLE 2 0 3 0
% 4 1410 2005 1410 2125 1410 2125 1410 2125
% 
\special{pn 8}%
\special{ar 1410 2006 120 120  0.0000000 6.2831853}%
% STR 2 0 3 0
% 3 2810 705 2810 805 5 0
% $B_0$
\put(28.1000,-8.0500){\makebox(0,0){$B_0$}}%
% CIRCLE 2 0 3 0
% 4 2810 805 2810 925 2810 925 2810 925
% 
\special{pn 8}%
\special{ar 2810 806 120 120  0.0000000 6.2831853}%
% STR 2 0 3 0
% 3 2810 1305 2810 1405 5 0
% $B_1$
\put(28.1000,-14.0500){\makebox(0,0){$B_1$}}%
% CIRCLE 2 0 3 0
% 4 2810 1405 2810 1525 2810 1525 2810 1525
% 
\special{pn 8}%
\special{ar 2810 1406 120 120  0.0000000 6.2831853}%
% LINE 2 0 3 0
% 2 1525 805 2685 805
% 
\special{pn 8}%
\special{pa 1526 806}%
\special{pa 2686 806}%
\special{fp}%
% LINE 2 0 3 0
% 2 1525 1405 2685 1405
% 
\special{pn 8}%
\special{pa 1526 1406}%
\special{pa 2686 1406}%
\special{fp}%
% STR 2 0 3 0
% 3 2125 1205 2125 1305 5 0
% $H$
\put(21.2500,-13.0500){\makebox(0,0){$H$}}%
% STR 2 0 3 0
% 3 2125 605 2125 705 5 0
% $\{j+2\} \cup H$
\put(21.2500,-7.0500){\makebox(0,0){$\{j+2\} \cup H$}}%
% CIRCLE 2 0 3 0
% 4 1415 1705 1415 1365 1295 1385 1295 1985
% 
\special{pn 8}%
\special{ar 1416 1706 340 340  1.9756881 4.3536183}%
% STR 2 0 3 0
% 3 885 1605 885 1705 5 0
% $j+2$
\put(8.8500,-17.0500){\makebox(0,0){$j+2$}}%
% STR 2 0 3 0
% 3 3405 1305 3405 1405 5 0
% $\Rightarrow$
\put(34.0500,-14.0500){\makebox(0,0){$\Rightarrow$}}%
% STR 2 0 3 0
% 3 4020 1915 4020 2015 5 0
% $A_2$
\put(40.2000,-20.1500){\makebox(0,0){$A_2$}}%
% CIRCLE 2 0 3 0
% 4 4020 2015 4020 2135 4020 2135 4020 2135
% 
\special{pn 8}%
\special{ar 4020 2016 120 120  0.0000000 6.2831853}%
% STR 2 0 3 0
% 3 5420 715 5420 815 5 0
% $B_0$
\put(54.2000,-8.1500){\makebox(0,0){$B_0$}}%
% CIRCLE 2 0 3 0
% 4 5420 815 5420 935 5420 935 5420 935
% 
\special{pn 8}%
\special{ar 5420 816 120 120  0.0000000 6.2831853}%
% STR 2 0 3 0
% 3 5420 1315 5420 1415 5 0
% $B_1$
\put(54.2000,-14.1500){\makebox(0,0){$B_1$}}%
% CIRCLE 2 0 3 0
% 4 5420 1415 5420 1535 5420 1535 5420 1535
% 
\special{pn 8}%
\special{ar 5420 1416 120 120  0.0000000 6.2831853}%
% STR 2 0 3 0
% 3 5850 990 5850 1090 5 0
% $H$
\put(58.5000,-10.9000){\makebox(0,0){$H$}}%
% ELLIPSE 2 0 3 0
% 4 5410 2010 4000 800 5290 810 4010 1890
% 
\special{pn 8}%
\special{ar 5410 2010 1410 1210  3.2412613 4.6267620}%
% STR 2 0 3 0
% 3 3405 405 3405 505 5 0
% Figure 5
\put(34.0500,-5.0500){\makebox(0,0){Figure 5}}%
% STR 2 0 3 0
% 3 600 2200 600 2300 1 0
% $\left(\!\!\begin{array}{lll}A_0=A([j]\cup S'), A_1=A([j-1]\cup S), A_2=A([j-1]\cup\{i_1+1\} \cup S'),\\ B_0=B([j] \cup S'), B_1 = B([2,j+1] \cup S'), \D(S)= (A_0,A_1).\end{array}\!\!\right)$
\put(6.0000,-23.0000){\makebox(0,0)[lt]{$\left(\!\!\begin{array}{lll}A_0=A([j]\cup S'), A_1=A([j-1]\cup S), A_2=A([j-1]\cup\{i_1+1\} \cup S'),\\ B_0=B([j] \cup S'), B_1 = B([2,j] \cup S), \D(S)= (A_0,A_1).\end{array}\!\!\right)$}}%
% CIRCLE 2 0 3 0
% 4 5410 1100 5730 1100 5540 1390 5550 800
% 
\special{pn 8}%
\special{ar 5410 1100 320 320  5.1490161 6.2831853}%
\special{ar 5410 1100 320 320  0.0000000 1.1493771}%
% STR 2 0 3 0
% 3 4480 810 4480 910 5 0
% $j+2$
\put(44.8000,-9.1000){\makebox(0,0){$j+2$}}%
\end{picture}%

\end{center}
\bigskip
\bigskip

Note that Case 1 contains a proof of Lemma \ref{6.12} for $\D_1=\D_1([2,n+1])$
which is the starting point of the induction.
\medskip

\textit{Case 2.}
Suppose $j$ is odd and $i_1>j+1$.
Then
$$\D(S)=\big\{A\big([j-1] \cup S \big), B\big([i_1-j-1,i_1-2] \cup S'\big)\big\}.$$
By applying the induction hypothesis to $\hat S=\{i_1-1\} \cup S' \in X_j$,
\begin{itemize}
\item[(6.4)]
$B([i_1-j-1,i_1-2] \cup S')$ and $B([i_1-j,i_1-1] \cup S')$
are directly connected on $\Lambda(S)$ by colors $H$ (and $i_1+1$ if $i_1+1 \not \in S$).
\end{itemize}
By applying the induction hypothesis to $\tilde S= \{i_1-1 \} \cup S \in X_{j-1}$,
\begin{itemize}
\item[(6.5)]
$A([j-1] \cup S)$ and $B([i_1-j+1,i_1-1] \cup S)=B([i_1-j+1,i_1] \cup S')$
are directly connected on $\Lambda(S)$ by colors $H$,
\end{itemize}
where the above statement follows from (E1) when $j=1$.
Also, by (E3),
\begin{itemize}
\item[(6.6)]
if $i_1+1 \not \in S$ then
$A([j-1] \cup S)$ and $A([j-1] \cup\{i_1+1\} \cup S')$
are directly connected on $\Lambda(n,m)$ by color $i_1+1$.
\end{itemize}
Then it is straightforward that statement (i) and (ii) follow from (6.4), (6.5), (6.6)
and the definition of cancellations.
%See Figure 6.
%\medskip

%\begin{center}
%\input{Fig6}
%\end{center}
%\bigskip

\textit{Case 3.}
Suppose $j$ is even.
Then
$$\D(S)=\big\{ B \big([i_1-j,i_1-2] \cup S\big), B\big([i_1-j,i_1-1] \cup S'\big)\big\}.$$
We claim
\begin{itemize}
\item[(6.7)]
$A([j] \cup S')$ and $B([i_1-j,i_1-1] \cup S')$
are directly connected on $\Lambda(S)$ by colors $H'$.
\end{itemize}
If $i_1=j+1$ then (6.7) follows from (E1).
If $i_1>j_1+1$ then apply the induction hypothesis to $\hat S= \{i_1-1\} \cup S' \in X_j$.
Hence (6.7) holds.

Also,
by applying the induction hypothesis to $\tilde S= \{i_1-1 \} \cup S \in X_{j-1}$,
\begin{itemize}
\item[(6.8)]
$B([i_1-j,i_1-2] \cup S)$ and $B([i_1-j+1,i_1-1] \cup S)=B([i_1-j+1,i_1] \cup S')$
are directly connected on $\Lambda(S)$ by colors $H'$.
\end{itemize}
It is straightforward that statement (iii) follows from (6.7), (6.8)
and the definition of cancellations.
\end{proof}

\begin{proof}
[Proof of Lemma \ref{6.10}]
Let $S=\{i_1,\dots,i_{n+1-j}\}$.
We first prove that the vertices in $\D(S)$ are directly connected on $\Lambda(S)$ by color $i_1$.
If $j$ is even, then this is obvious since  by (E3) they are directly connected on $\Lambda(n,m)$ by color $i_1$.
Suppose $j$ is odd.
If $i_1=j+1$ then by (E3) they are directly connected on $\Lambda(n,m)$ by color $i_1$.
If $i_1>j+1$ then the claim follows by applying Lemma \ref{6.12}(ii) to $\{i_1-1\} \cup S'$.

It remains to prove that, for any $k \in \color (S) \setminus \{i_1\}$,
vertices in $\D(S)$ are directly connected on $\Lambda(S)$ by color $k$.
Let $k \in \color (S) \setminus \{i_1\}$ and $T=(S \setminus \{k\}) \cup \{k-1\}$.
Let $\D(T)=\{x,y\}$ and $\D(S)=\{z,w\}$.
Since $\D(T) \succ \D(S)$,
$x,y,z,w$ are vertices of $\Lambda(T)$.
By (E1), $x$ and $z$ are directly connected on $\Lambda(T)$ by color $k$.
Similarly $y$ and $w$ are directly connected on $\Lambda(T)$ by color $k$.
These facts say that $z$ and $w$
are directly connected on $\Lambda'(T)=\del_{\D(T)} \Lambda(T)$ by color $k$ (see Figure 6),
and therefore they are directly connected on $\Lambda(S)$ by color $k$.
\end{proof}

\begin{center}
%WinTpicVersion3.08
\unitlength 0.1in
\begin{picture}( 24.4000, 10.8500)(  8.6500,-13.0000)
% LINE 2 0 3 0
% 4 1000 600 1800 600 1800 1200 1000 1200
% 
\special{pn 8}%
\special{pa 1000 600}%
\special{pa 1800 600}%
\special{fp}%
\special{pa 1800 1200}%
\special{pa 1000 1200}%
\special{fp}%
% CIRCLE 2 0 2 0
% 4 1000 600 1000 700 1000 700 1000 700
% 
\special{pn 8}%
\special{sh 0}%
\special{ar 1000 600 100 100  0.0000000 6.2831853}%
% CIRCLE 2 0 2 0
% 4 1800 600 1800 700 1800 700 1800 700
% 
\special{pn 8}%
\special{sh 0}%
\special{ar 1800 600 100 100  0.0000000 6.2831853}%
% CIRCLE 2 0 2 0
% 4 1800 1200 1800 1300 1800 1300 1800 1300
% 
\special{pn 8}%
\special{sh 0}%
\special{ar 1800 1200 100 100  0.0000000 6.2831853}%
% CIRCLE 2 0 2 0
% 4 1000 1200 1000 1300 1000 1300 1000 1300
% 
\special{pn 8}%
\special{sh 0}%
\special{ar 1000 1200 100 100  0.0000000 6.2831853}%
% STR 2 0 3 0
% 3 1000 500 1000 600 5 0
% $x$
\put(10.0000,-6.0000){\makebox(0,0){$x$}}%
% STR 2 0 3 0
% 3 1800 500 1800 600 5 0
% $z$
\put(18.0000,-6.0000){\makebox(0,0){$z$}}%
% STR 2 0 3 0
% 3 1000 1100 1000 1200 5 0
% $y$
\put(10.0000,-12.0000){\makebox(0,0){$y$}}%
% STR 2 0 3 0
% 3 1800 1100 1800 1200 5 0
% $w$
\put(18.0000,-12.0000){\makebox(0,0){$w$}}%
% STR 2 0 3 0
% 3 1400 1010 1400 1110 5 0
% $k$
\put(14.0000,-11.1000){\makebox(0,0){$k$}}%
% STR 2 0 3 0
% 3 1400 410 1400 510 5 0
% $k$
\put(14.0000,-5.1000){\makebox(0,0){$k$}}%
% STR 2 0 3 0
% 3 2500 800 2500 900 5 0
% $\Rightarrow$
\put(25.0000,-9.0000){\makebox(0,0){$\Rightarrow$}}%
% STR 2 0 3 0
% 3 2400 200 2400 300 5 0
% Figure 7
\put(24.0000,-3.0000){\makebox(0,0){Figure 6}}%
% ELLIPSE 2 0 3 0
% 4 3200 900 3000 600 3200 600 3200 1200
% 
\special{pn 8}%
\special{ar 3200 900 200 300  1.5707963 4.7123890}%
% CIRCLE 2 0 2 0
% 4 3205 600 3205 700 3205 700 3205 700
% 
\special{pn 8}%
\special{sh 0}%
\special{ar 3206 600 100 100  0.0000000 6.2831853}%
% STR 2 0 3 0
% 3 3205 500 3205 600 5 0
% $z$
\put(32.0500,-6.0000){\makebox(0,0){$z$}}%
% CIRCLE 2 0 2 0
% 4 3205 1200 3205 1300 3205 1300 3205 1300
% 
\special{pn 8}%
\special{sh 0}%
\special{ar 3206 1200 100 100  0.0000000 6.2831853}%
% STR 2 0 3 0
% 3 3205 1100 3205 1200 5 0
% $w$
\put(32.0500,-12.0000){\makebox(0,0){$w$}}%
% STR 2 0 3 0
% 3 2900 800 2900 900 5 0
% $k$
\put(29.0000,-9.0000){\makebox(0,0){$k$}}%
\end{picture}%
\end{center}

\section{Real projective spaces and odd dimensional manifolds}

In this section,
we characterize face vectors of a few more classes of simplicial posets
by using Theorems \ref{1.2} and \ref{2.4}.

\subsection{Simplicial cell decompositions of $\RR P^n$}\

Face vectors of simplicial cell decompositions of the real projective space $\RR P^n$
were first studied by Masuda.
For integers $1 \leq i <n$,
let $r(n,i)= {n \choose i}$ if $i$ is even and $r(n,i)=0$ if $i$ is odd,
and let $r(n,n)=-1$ if $n$ is odd and $r(n,n)=0$ if $n$ is even.
The next result was proved by Masuda
except for the necessity of (3).

\begin{theorem}
\label{7.1}
A vector $h=(h_0,h_1,\dots,h_n) \in \ZZ^{n+1}$ is the $h$-vector
of a simplicial cell decomposition of $\RR P^{n-1}$ if and only if it satisfies
the following conditions:
\begin{itemize}
\item[(1)] $h_0=h_n -r(n,n)=1$ and $h_i-r(n,i)=h_{n-i} - r(n,n-i)$
for $i=1,2,\dots,n-1$.
\item[(2)] $h_i - r(n,i) \geq 0$ for $i =1,2,\dots,n-1$.
\item[(3)] if $h_i-r(n,i)=0$ for some $1 \leq i \leq n-1$ then
$h_0+h_1+ \cdots +h_n$ is even.
\end{itemize}
\end{theorem}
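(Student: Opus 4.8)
The plan is to prove the theorem by passing to $h''$-vectors over a field of characteristic $2$, where everything reduces to results already established. Fix such a field $K$; then, exactly as in the proof of Corollary \ref{3.2}, any simplicial cell decomposition $P$ of $\RR P^{n-1}$ is an orientable simplicial cell homology manifold over $K$, and its Betti numbers are $\beta_0(P)=0$ and $\beta_j(P)=1$ for $1\le j\le n-1$, since $\tilde H_j(\RR P^{n-1};K)=K$ for $1\le j\le n-1$. The technical heart of the argument is to substitute these values into the definition of $h''$ with $d=n$ and evaluate the alternating sum $\sum_{\ell=1}^k(-1)^{\ell-k}\beta_{\ell-1}$, which collapses to $1$ when $k$ is even and $0$ when $k$ is odd. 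This yields the single identity
\[
h_k''(P)=h_k(P)-r(n,k)\qquad\mbox{for all }k,
\]
where one reads $r(n,0)=0$; the boundary cases $k=0$ and $k=n$ use $h_0''=1$ and $h_n''=\beta_{n-1}=1$, and a direct check in both parities of $n$ confirms $h_n-r(n,n)=h_n''=1$.

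With this identity in hand the translation is routine. Conditions (1) and (2) of the theorem become, verbatim, conditions (1) and (2) of Theorem \ref{1.1} applied to $h''(P)$: that is, $h_0''=h_n''=1$, the symmetry $h_i''=h_{n-i}''$, and $h_i''\ge0$. For condition (3) I would note that
\[
\sum_{i=0}^n h_i(P)-\sum_{i=0}^n h_i''(P)=\sum_{i=0}^n r(n,i)=2^{\,n-1}-2,
\]
which is even for $n\ge2$, so the two sums always have the same parity; hence the implication ``$h_i-r(n,i)=0$ for some $i$ $\Rightarrow$ $\sum h_i$ even'' is equivalent to ``$h_i''=0$ for some $i$ $\Rightarrow$ $\sum h_i''$ even''.

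For necessity, Theorem \ref{2.4} applied over $K$ of characteristic $2$ shows that $h''(P)$ satisfies conditions (1), (2) and (3) of Theorem \ref{1.1}; translating back through the identity and the parity observation gives (1)--(3) of the present theorem. For sufficiency, given $h$ satisfying (1)--(3), the vector defined by $h_k''=h_k-r(n,k)$ satisfies (1)--(3) of Theorem \ref{1.1}, so $h''\in\hh(S^{n-1})$; by Corollary \ref{3.2} it then suffices to exhibit one simplicial cell decomposition of $\RR P^{n-1}$ with $h''=(1,0,\dots,0,1)$, equivalently a crystallization of $\RR P^{n-1}$ with $2^{\,n-1}$ vertices, and such minimal crystallizations are classical (see \cite{Ma}, \cite{FGG}). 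The genuinely new content here is the necessity of (3); its essential input is Corollary \ref{2.3} (evenness of the number of facets of an orientable homology manifold with a vanishing $h''_i$), so the main obstacle is really already surmounted once that corollary is available. The only remaining care is the bookkeeping in the identity $h_k''=h_k-r(n,k)$ and the parity of $\sum_i r(n,i)$, which must be verified with attention to the endpoints $k=0,n$, since the whole reduction rests on them.
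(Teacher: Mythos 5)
Your proposal is correct and takes essentially the same route as the paper: work over a field of characteristic $2$, establish the identity $h''_k(P)=h_k(P)-r(n,k)$ from the Betti numbers $\beta_j(\RR P^{n-1};K)=1$, deduce necessity from Theorem \ref{2.4} (with the parity translation via $\sum_i r(n,i)=2^{n-1}-2$), and reduce sufficiency via Corollary \ref{3.2} to exhibiting a decomposition with $h''=(1,0,\dots,0,1)$, i.e.\ with $2^{n-1}$ facets. The only difference is that where you cite the minimal crystallization as classical, the paper constructs it explicitly as the antipodal quotient of the boundary of the $n$-dimensional cross-polytope $\diamond^n$, whose $2^n$ facets descend to the required $2^{n-1}$.
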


\begin{proof}
We work over a field $K$ of characteristic $2$.

(Necessity.)
Let $P$ be a simplicial cell decomposition of $\RR P^{n-1}$.
Then $P$ is an orientable homology manifold with $\beta_i(P)=1$ for all $i \geq 1$.
Thus $h_0''(P)=h_0,$
$h_i''(P)=h_i(P)-r(n,i)$ for $i=1,2,\dots,n$.
Then the desired conditions follow from Theorem \ref{2.4}.

(Sufficiency.)
By Corollary \ref{3.2},
what we must prove is the existence of a simplicial cell decomposition $P$
of $\RR P^{n-1}$ with $h''(P)=(1,0,\dots,0,1)$.
In the same way as in the proof of Lemma \ref{6.2},
it is enough to find a simplicial cell decomposition $P$ of $\RR P^{n-1}$
with
$$f_n(P)=2+ \sum_{i=1}^{n-2} \beta_i(P) { n-1 \choose i}= 2 ^{n-1}.$$
Consider the boundary complex $\partial \diamond^n$ of
the $n$-dimensional cross polytope $\diamond ^n \subset \RR^n$.
Thus $\diamond ^n$ is the convex hull of $\{\pm \mathbf e_i: i=1,2,\dots,n\}$,
where $\mathbf e_i$ is the $i$th unit vector of $\RR^n$.
Since cross polytope is simplicial and century symmetric
(say, if $F \subset \RR^n$ is a face of $\diamond ^n$ then $-F$ is also a face of $\diamond ^n$),
by identifying  $F$ and $-F$ for all faces $F$ of $\partial \diamond ^n$,
we obtain a simplicial cell decomposition of $\RR P^{n-1}$.
Since the number of the facets of $\diamond ^n$ is $2^n$,
the number of the facets of such a simplicial cell decomposition is $2^{n-1}$.
\end{proof}

\subsection{Odd dimensional manifolds}\

For a $(d-1)$-dimensional simplicial poset $P$,
the vector
$$\beta(P)=\big(1,\beta_1(P),\dots,\beta_{d-1}(P) \big) \in \ZZ_{\geq 0}^d$$
is called the \textit{Betti vector of $P$}.
If $P$ is an orientable homology manifold then the Poincar\'e duality guarantees the symmetry
$\beta_i(P)= \beta_{d-1-i}(P)$ for $i=1,2,\dots,d-2$.

For any vector $\beta=(1,\beta_1,\dots,\beta_{d-1}) \in \ZZ^d_{\geq 0}$
and a vector $h=(h_0,h_1,\dots,h_d) \in \ZZ^{d+1}$, we define
$h^\beta=(h_0^\beta,h_1^\beta,\dots,h^\beta_d)$ by
$h_0^\beta=h_0$, $h_k ^\beta= h_k - {d \choose k} \{ \sum_{\ell =2}^{k} (-1)^{\ell -k} \beta_{\ell-1}\}$ for $k=1,2,\dots,d-1$ and $h_d^\beta=h_d- \sum_{\ell =2}^{d-1} (-1)^{\ell -d} \beta_{\ell-1}$.
Thus, for a connected simplicial poset $P$,
if $h=h(P)$ and $\beta=\beta(P)$, then $h^\beta=h''(P)$.

\begin{theorem}
\label{7.2}
Let $d$ be an even number.
The vector $h=(h_0,h_1,\dots,h_d) \in \ZZ^{d+1}$
is the $h$-vector of a simplicial cell decomposition of a $(d-1)$-dimensional topological manifold without boundary if and only if there exists a symmetric vector
$\beta =(1,\beta_1,\dots,\beta_{d-1} ) \in \ZZ_{\geq 0}^d$ such that $h^\beta$
satisfies the conditions (1), (2) and (3) in Theorem \ref{1.1}.
\end{theorem}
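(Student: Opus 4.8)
The plan is to prove both directions through the identity $h^\beta = h''(P)$, which holds whenever $\beta = \beta(P)$: this reduces the statement about the (field-independent) $h$-vector to statements about $h''$-vectors, for which Theorem \ref{2.4} supplies the necessity and Theorem \ref{1.2} together with the connected-sum machinery of Lemma \ref{3.1} supplies the sufficiency.

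For necessity, suppose $P$ is a simplicial cell decomposition of a $(d-1)$-dimensional manifold $M$ without boundary, and I would compute all Betti numbers over a field $K$ of characteristic $2$. Over such a field every manifold is an orientable homology manifold, so $\beta = \beta(P)$ is symmetric by Poincar\'e duality (in particular $\beta_{d-1}=1$), and by Theorem \ref{2.4} the vector $h''(P)$ satisfies conditions (1), (2) and (3) of Theorem \ref{1.1}. Since $h(P)^\beta = h''(P)$, this $\beta$ is the required symmetric witness, and necessity is immediate.

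For sufficiency I am given $h$ and a symmetric $\beta=(1,\beta_1,\dots,\beta_{d-1})$ with $\beta_i=\beta_{d-1-i}\ge 0$ such that $h^\beta$ obeys (1)--(3), and the plan is to realize this Betti vector by a product-of-spheres manifold and then correct its $h$-vector. Because $d$ is even, $d-1$ is odd, so no middle index is self-dual; hence for each $1\le i\le \frac{d-2}{2}$ I would take $\beta_i$ copies of $S^i\times S^{d-1-i}$, each carrying a decomposition with $h''=(1,0,\dots,0,1)$ by Theorem \ref{1.2}. Let $P_0$ be the connected sum of all these blocks. Every block is orientable, so Lemma \ref{3.1}(i) gives $\beta(P_0)=\beta$ with the top class staying one-dimensional, while the additivity of $h''$ in the middle degrees from Lemma \ref{3.1}(ii) shows that a connected sum of blocks all equal to $(1,0,\dots,0,1)$ is again $(1,0,\dots,0,1)$, so $h''(P_0)=(1,0,\dots,0,1)$. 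Finally, using Theorem \ref{1.1} I would choose a simplicial cell sphere $R$ with $h(R)=h''(R)=h^\beta$ and set $P=P_0\# R$: since $R$ is a sphere, $\beta(P)=\beta$ is unchanged, and Lemma \ref{3.1}(ii) gives $h''(P)=h^\beta$ in all degrees (the endpoints being $h_0''=h_0^\beta=1$ and $h_d''=h_d''(R)=h_d^\beta=1$). Therefore $h(P)=h$, and $P$ is a decomposition of $M_0\#S^{d-1}=M_0$, an odd-dimensional manifold without boundary.

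The substantive inputs, Theorems \ref{1.2} and \ref{2.4}, are already in hand, so the remaining work is organizational. The one conceptual point that makes the construction go through is that odd dimensionality removes the self-dual middle Betti number, which is what allows every admissible symmetric $\beta$ to be assembled from the blocks $S^i\times S^{d-1-i}$; and the decoupling of the two constraints—realizing $\beta$ by a $(1,0,\dots,0,1)$-manifold and then installing the prescribed $h^\beta$ by connected-summing a sphere—is the crux that lets the $h$-vector be controlled independently of the homology. The main care is thus in the bookkeeping of Lemma \ref{3.1} under iterated connected sums rather than in any single hard estimate.
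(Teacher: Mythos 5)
Your proof is correct and follows essentially the same route as the paper: necessity via characteristic~$2$ and Theorem~\ref{2.4}, and sufficiency by realizing the symmetric $\beta$ as a connected sum of blocks $S^i\times S^{d-1-i}$ with $h''=(1,0,\dots,0,1)$ from Theorem~\ref{1.2} (possible precisely because $d-1$ is odd), then adjusting the $h''$-vector by connect-summing a simplicial cell sphere from Theorem~\ref{1.1}. The only difference is presentational: you unwind Corollary~\ref{3.2} explicitly as $P_0\# R$ where the paper invokes $\hh(M)=\hh(S^{d-1})$ directly (and, as the paper's phrasing ``from a sphere'' indicates, the degenerate case $\beta=(1,0,\dots,0,1)$ with no blocks is handled by taking $P=R$ itself).
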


\begin{proof}
By considering a field of characteristic $2$,
any topological manifold is an orientable homology manifold.
Then the necessity follows from Theorem \ref{2.4}.

We prove the sufficiency.
By Lemma \ref{3.1} and Corollary \ref{3.2},
for $(d-1)$-dimensional orientable manifolds $M_1$ and $M_2$,
if $\hh(M_1)=\hh(S^{d-1})$ and $\hh(M_2)=\hh(S^{d-1})$, then $\hh(M_1 \# M_2)=\hh(S^{d-1})$.
Since, for any symmetric vector $\beta=(1,\beta_1,\dots,\beta_{d-1}) \in \ZZ^d_{\geq 0}$,
we can make a $(d-1)$-dimensional manifold whose Betti vector is equal to $\beta$
from a sphere by taking a connected sum with the product of spheres repeatedly,
the statement follows from Theorem \ref{1.1}.
\end{proof}

\begin{remark}
The same argument characterizes all possible $h$-vectors of $(d-1)$-dimensional orientable
simplicial cell (homology) manifolds in characteristic $0$ when $d \not \equiv 3$ mod
$4$ since any Betti vector is attained by the same construction (see \cite{CTS}).

We also note that since $\hh(\RR P^2)= \hh(S^2)$ in characteristic $2$
by Theorem \ref{7.1},
and since $\hh(\CC P^2)=\hh(S^4)$ by the result of Gagliardi \cite{Ga} and Corollary \ref{3.2},
the same argument characterizes all possible face vectors of simplicial cell decompositions of $d$-dimensional topological manifolds without boundary for $d \leq 5$.
\end{remark}

As we suggested in section 3,
it would be interesting to find characterizations of face vectors of simplicial cell decompositions
of several types of manifolds.
For $3$-manifolds $M$,
it seems to be plausible that $\hh(M)$ has the unique minimal element.
(Indeed, this is true if we restrict the problem to graphical simplicial posets
since $h''$-vector decreases by  cancelling a dipole.)
Also, while we only consider manifolds without boundary in this paper,
it is of interest to consider manifolds with boundary.
The characterization of face vectors is open even for balls.
See \cite{Ko}.

%\noindent
%\textbf{Acknowledgments}:

\end{document}